\newcommand{\bd}{\begin{description}}
\newcommand{\ed}{\end{description}}
\newcommand{\bi}{\begin{itemize}}
\newcommand{\ei}{\end{itemize}}
\newcommand{\be}{\begin{enumerate}}
\newcommand{\ee}{\end{enumerate}}
\newcommand{\beq}{\begin{equation}}
\newcommand{\eeq}{\end{equation}}
\newcommand{\beqs}{\begin{eqnarray*}}
\newcommand{\eeqs}{\end{eqnarray*}}
\numberwithin{equation}{section}
\definecolor{DarkGreen}{rgb}{0.2, 0.6, 0.3}
\newcommand{\Tt}{{\mathcal{T}}}
\newcommand{\Tc}{{\overline{\mathcal{T}}}}
\newtheorem{theorem}{Theorem}[section]
\newtheorem{lemma}[theorem]{Lemma}
\newtheorem{definition}[theorem]{Definition}
\newtheorem{corollary}[theorem]{Corollary}
\newtheorem{case}{Case}
\newtheorem{remark}[theorem]{Remark}
\newtheorem{fact}{Fact}
\newtheorem{proposition}[theorem]{Proposition}
\begin{document}
\title{\textbf{Ramsey numbers for partially-ordered sets
} \footnote{
The work was supported by the Hungarian National
Research, Development and Innovation Office
NKFIH (No.~SSN135643 and K132696); the National Science Foundation of China
(Nos. 12471329,~12301458~and~12061059); and the Zhejiang Provincial Natural Science Foundation of China (No. LQ24A010005); JSPS KAKENHI (Grant Numbers JP22K19773 and JP23K03195)}}

\author{Gyula O.H. Katona\thanks{HUN-REN Alfr\'ed R\'enyi Institute of Mathematics, Budapest Reaaltanoda utca 13-15, 1053, Hungary. {\tt
katona.gyula.oh@renyi.hu}}, \ \ Yaping Mao\thanks{Corresponding author: Academy of Plateau Science and Sustainability,
and School of Mathematics and Statistics, Qinghai Normal University, Xining, Qinghai 810008, China. {\tt yapingmao@outlook.com; myp@qhnu.edu.cn}} , \ \  Kenta Ozeki\thanks{Faculty of Environment and Information Sciences, Yokohama
National University, 79-2 Tokiwadai, Hodogaya-ku, Yokohama 240-8501,
Japan. {\tt ozeki-kenta-xr@ynu.ac.jp}},
\ \ Zhao
Wang\thanks	{College of Science, China Jiliang University, Hangzhou
310018, China. {\tt
wangzhao@mail.bnu.edu.cn}}}
\date{}
\maketitle

\begin{abstract}
We say that a poset $Q$
contains a copy (resp.~an induced copy) of a poset $P$
if there is an injection $f : P \to Q$
such that for any $x,y \in P$,
$f(x)\leq f(y)$ in $Q$ if (resp.~if and only if) $x\leq y$ in $P$.
Let $\mathcal{Q}=\{Q_{n} : n\geq 1\}$ be a
family of posets such that $Q_n\subseteq Q_{n+1}$ and $|Q_n|<|Q_{n+1}|$ for each $n$.
For given $k$ posets $P_1, P_2, \dots , P_k$,
the \emph{weak (resp.~strong) poset Ramsey number for $t$-chains}
is the smallest number $n$ such that for any coloring of $t$-chains in $Q_n\in \mathcal{Q}$ with $k$ colors, say $1,2, \dots, k$,
there is a monochromatic (resp.~induced) copy of the poset $P_i$ in color $i$ for some $1\leq i\leq k$.
In this paper,
we give several lower and upper bounds on 
the weak and strong poset Ramsey number for $t$-chains.
\if0
We define $\operatorname{R}_{k,t}^{\sharp}(\mathcal{Q}\,|\,P_1,P_2,\ldots,P_k)$
for induced copies of posets analogously. Let $\mathcal{B}=\{B_{n} : n\geq 1\}$ be
a family of the Boolean lattices such that $B_n\subseteq B_{n+1}$ and $|B_n|<|B_{n+1}|$
for each $n$.
Then
$\operatorname{R}_{k,t}(\mathcal{B}\,|\,P_1,P_2,\ldots,P_k)$ or $\operatorname{R}_{k,t}^{\sharp}(\mathcal{B}\,|\,P_1,P_2,\ldots,P_k)$
is called the Boolean Ramsey number.
In this paper, we first give a lower bound of $\operatorname{R}_{k,t}(\mathcal{Q}\,|\,P_1,P_2,\ldots,P_k)$ for general posets $P_1, P_2, \dots ,P_k$ by Lov\'{a}sz Local Lemma.
A necessary condition for upper bound $\operatorname{R}_{k,1}(\mathcal{B}\,|\,P)$ is given in terms of Lubell function, and we apply it to the Ramsey number of matchings, which gives an answer to a question
by Cox and Stolee in 2018.
By the methods of Conlon-Ferber and Wigderson on the multicolor Ramsey numbers of graphs, we give a lower bound of
$\operatorname{R}_{k,t}^{\sharp}(\mathcal{B}\,|\,B_{m_1},B_{m_2},\ldots,B_{m_k})$. We also give some bounds of poset Rasmey numbers
for some certain posets.
In the end,
we give an upper bound of $\operatorname{R}_{k,1}^{\sharp}(\mathcal{B}\,|\,B_{n_1},B_{n_2},\ldots,B_{n_k})$,
which is better than Walzer's one when $B_{n_1}=B_{n_2}=\cdots=B_{n_k}$.
\fi
\\[2mm]
{\bf Keywords:} Ramsey theory; Poset Ramsey number; Poset; Boolean lattice;
\\[2mm]
{\bf AMS subject classification 2010:} 05D05; 05D10; 05D40; 06A07.
\end{abstract}

\section{Introduction}

Ramsey theory
is a branch of mathematics that studies the conditions of when a combinatorial object necessarily contains some smaller given objects; see \cite{Ramsey30}. This subject has been
a hot topic in mathematics for nearly one century due to their intrinsic
beauty, wide applicability, and overwhelming difficulty despite
somewhat misleadingly simple statements.
Ramsey theory has important applications in algebra, geometry, logic, ergodic theory, theoretical computer science, information theory, number theory, and set theory; see the book \cite{GRS90} and survey paper \cite{Rosta04}.

A \emph{partially-ordered set}, or a \emph{poset}, is a pair $(P,\leq )$,
where $P$ is a set and $\leq $ is a relation on $P$
that is reflexive, anti-symmetric, and transitive.
When the relation $\leq$ is clear from the context, we simply write $P$ as a poset.
A pair $x, y\in P$ is \emph{comparable} if $x\leq y$ or $y\leq x$.
A subset $\mathcal{C}\subseteq P$ is called a \emph{chain} if any two elements
of $\mathcal{C}$ are comparable, and a subset $\mathcal{A}\subseteq P$ is called
an \emph{anti-chain} if any two distinct elements of $\mathcal{A}$ are incomparable.
A \emph{$t$-chain} is a set of $t$ distinct pairwisely comparable elements.
Let $|P|$ denote the number of elements in $P$.

If $P$ and $Q$ are posets, then an injection $f : P\to Q$ is a \emph{weak embedding} if $f(x)\leq f(y)$ when
$x\leq y$; we say that $f(P)$ is a copy of $P$ in $Q$.
An injection $f : P\to Q$ is a \emph{strong embedding} if $f (x)\leq f (y)$ if and
only if $x\leq y$; we say that $f(P)$ is an \emph{induced copy} of $P$ in $Q$.
\if0
 The \emph{width} $w(P)$ of $P$ is the number $w$ of elements in a
largest set $\{a_1,..., a_w\}$ were no two elements are comparable.
Note that the height and width of the Boolean lattice $B_n$ are
$n+1$ and ${{n}\choose{\lceil\frac{n}{2}\rceil}}$, respectively.
Let $|Q|_t$ be the number of $t$-chains in $Q$.
We write $[n+1,N]=\{n+1,n+2,...,N\}$.
\fi
In this paper, we consider a $k$-coloring of $t$-chains in a given poset,
which is an assignment of $k$ colors,
say $1,2, \dots , k$,
to each $t$-chain in the poset.
If $t=1$, that is,
if it is a $k$-coloring of elements of the poset,
then it is simply called a $k$-coloring of the poset.
A colored poset is \textit{monochromatic} if all of its $t$-chains
share the same color.

Ramsey theory on posets was initiated by Ne\v{s}et\v{r}il and
R\"{o}dl \cite{NesetrilRodl}, who studied colorings of induced copies
of a fixed poset.
Kierstead and Trotter \cite{KiersteadTrotter} subsequently considered
arbitrary host posets instead of Boolean lattices and investigated
Ramsey-type problems in terms of the cardinality, height and width
of the host.
For further developments in this direction we refer to
\cite{DKT91, KiersteadTrotter, McColm, TrotterRamsey}.
More recently, several authors
\cite{AW17, CS18, GundersonRodlSidorenko, JohnstonLuMilans, LT22}
have focused specifically on finding induced copies of posets
in the Boolean lattice.

Cox and Stolee \cite{CS18} recast Ramsey theory for partially ordered
sets in the language of \emph{pographs}, that is, partially ordered
hypergraphs.
Let $(Q,\le)$ be a poset and let $t\ge1$.
A \emph{$t$-uniform pograph on $Q$} is a $t$-uniform hypergraph $H$
with vertex set $Q$ such that every edge of $H$ is a $t$-chain in $Q$
(we do not require that all $t$-chains of $Q$ appear as edges).
In this framework the objects of interest are Ramsey numbers for such
pographs.
For the purposes of this paper we use the following equivalent
formulation in terms of posets.

\begin{definition}\label{Defi-Wposet}
For a family $\mathcal{Q}=\{Q_{n} : n\geq 1\}$ of host posets
such that $Q_n \subseteq Q_{n+1}$ and $|Q_n|<|Q_{n+1}|$ for each $n$,
and given $k$ posets $P_1,P_2,\ldots,P_k$,
the \emph{weak poset Ramsey number for $t$-chains},
denoted by $\operatorname{R}_{k,t}(\mathcal{Q}\,|\,P_1,P_2,\ldots,P_k)$,
is the smallest integer $N$
such that for every $k$-coloring of the $t$-chains in $Q_N\in\mathcal{Q}$,
there exists a monochromatic copy of $P_i$ in color $i$ for some
$1\leq i\leq k$.
If no such $N$ exists, we set
\[
\operatorname{R}_{k,t}(\mathcal{Q}\,|\,P_1,\ldots,P_k)=\infty.
\]
\end{definition}

We simply write $\operatorname{R}_{k,t}(\mathcal{Q}\,|\,P)$ for $\operatorname{R}_{k,t}(\mathcal{Q}\,|\,P_1,P_2,\ldots,P_k)$ if $P_1=\cdots=P_k=P$.
If $t=1$, then we write $\operatorname{R}_{k}(\mathcal{Q}\,|\,P_1,P_2,\ldots,P_k)$. Furthermore, if $t=1$ and $k=2$, then we write $\operatorname{R}(\mathcal{Q}\,|\,P_1,P_2)$.

We analogously define the \emph{strong poset Ramsey number} $\operatorname{R}_{k,t}^{\sharp}(\mathcal{Q}\,|\,P_1,P_2,\ldots,P_k)$ for induced copy of $P_i$.
In addition,
we use the similar abbreviations such as
$\operatorname{R}_{k,t}^{\sharp}(\mathcal{Q}\,|\,P)$,
$\operatorname{R}_{k}^{\sharp}(\mathcal{Q}\,|\,P_1, P_2, \dots , P_k)$,
and
$\operatorname{R}^{\sharp}(\mathcal{Q}\,|\,P_1, P_2)$.

As we can see below,
these concepts include some known poset Ramsey problems.

\if0

An ordered hypergraph
$\mathcal{G}$ contains another ordered hypergraph $\mathcal{H}$ exactly when there
exists an embedding of $\mathcal{H}$ in $\mathcal{G}$ that preserves the vertex order.
\begin{definition}
For ordered $t$-uniform hypergraphs $\mathcal{G}_1,\ldots, \mathcal{G}_k$,
the \emph{ordered
Ramsey number} $\operatorname{\overline{R}}_{k,t}(\mathcal{G}_1,\ldots, \mathcal{G}_k)$ is the least integer $N$ such
that every $k$-coloring of the hyperedges of the complete $t$-uniform
hypergraph $\mathcal{K}_{N}^t$ with vertex set $\{1,\ldots,N\}$ contains an ordered copy of
$\mathcal{G}_i$ in color $i$ for some $i\in \{1,\ldots,k\}$.
\end{definition}
\fi

\begin{itemize}
\item
A \emph{Boolean lattice of dimension $n$}, denoted $B_n$, is the
power set of an $n$-element ground set equipped with inclusion relation.
Note that $|B_n| = 2^n$.
In this paper,
unless otherwise noted,
we use the set $[n]=\{1,2,...,n\}$
as a ground set of the Boolean lattice $B_n$ of dimension $n$.
\if0
The \emph{$k$-th level} of $B_n$ is the collection of all $k$-element subsets of $[n]$, denoted by ${[n]\choose k}$.
We write $e(P)$ for the largest $m$
such that $P$ cannot be embedded into any $m$ consecutive layers of $B_{n}$
for any $n$.
\fi
\if0
There are many other posets,
such as bipartition posets, semi-lattices, and polar spaces; see \cite{Terwilliger90}.
Posets can be used in economics and data science \cite{BR15},
chemistry \cite{RBK11}, geometry \cite{SK97},
 multi-criteria decision-making, and socio-economic and environmental sciences \cite{FB17}, etc.
\fi
Let $\mathcal{B}=\{B_{n} : n\geq 1\}$ be
the family of Boolean lattices.
Then
$\operatorname{R}_{k,t}(\mathcal{B}\,|\,P_1,P_2,\ldots,P_k)$
is called the \emph{Boolean Ramsey number}, see \cite{CS18}.

\item
Similarly,
the variation of strong embeddings has been considered:
in \cite{AW17, LT22},
$\operatorname{R}_{k,t}^{\sharp}(\mathcal{B}\,|\,P_1,P_2,\ldots,P_k)$
is called simply
the \emph{poset Ramsey number}.

\item
Let $\mathcal{C}=\{C_{n} : n\geq 1\}$ be a
family of the chains,
where $C_n$ denotes the $n$-chain,
see Figure \ref{some_posets_fig}.
Then
$\operatorname{R}_{k,t}(\mathcal{C}\,|\,P_1,P_2,\ldots,P_k)$
is called \emph{the chain Ramsey number},
see
\cite{CS18}.
If $t=1$, then the chain Ramsey number can be trivially found by the Pigeonhole principle,
and hence we assume $t\geq 2$ for considering this parameter.

Chain Ramsey number is an extension of the ordered Ramsey number for hypergraphs.
In particular,
when $t=2$,
the chain Ramsey number corresponds to
the ordered Ramsey number for graphs.
For more details on the ordered Ramsey numbers,
we refer to \cite{BJV19, CFLS17}.

\if0
\item From Definition \ref{Defi-Wposet}, if $Q_N=\mathcal{K}_{N}^t$ and $N=|Q_N|$ for each $N\geq 1$ and $P_i=\mathcal{G}_i$ for each $1\leq i\leq k$, then $\mathcal{Q}=\{\mathcal{K}_{N}^t\,|\,N\geq 1\}$, and hence $\operatorname{\overline{R}}_{k,t}(\mathcal{G}_1,\ldots, \mathcal{G}_k)=\operatorname{R}_{k,t}(\mathcal{Q}\,|\,P_1,P_2,\ldots,P_k)$.
\fi
\end{itemize}

\if0
The \emph{strong chain Ramsey number} $\operatorname{R}_{k,t}^{\sharp}(\mathcal{C}\,|\,P_1,P_2,\ldots,P_k),\operatorname{R}_{k,t}^{\sharp}(\mathcal{C}\,|\,P)$ and \emph{strong Boolean Ramsey number} $\operatorname{R}_{k,t}^{\sharp}(\mathcal{B}\,|\,P_1,P_2,\ldots,P_k),\operatorname{R}_{k,t}^{\sharp}(\mathcal{B}\,|\,P)$ for induced copies of posets analogously. If $t=1$ and $k=2$, then we write $\operatorname{R}^{\sharp}(\mathcal{B}\,|\,P_1,P_2)$ for short. Since there are two relations of ordered Ramsey numbers: adjacent relation between two vertices and ordering relation on vertex set, we only consider the strong embeddings for ordered Ramsey numbers. For more details on the ordered Ramsey numbers, we refer to \cite{BJV19, CFLS17}.
\fi

\if0
One can see that Boolean Ramsey numbers, ordered Ramsey numbers, chain Ramsey
numbers are all special cases of poset Ramsey numbers in Definition \ref{Defi-Wposet}.
Cox and Stolee \cite{CS18} obtained the following results.
\begin{theorem}[{\upshape Cox and Stolee \cite{CS18}}]
\label{CB12}
For $k \geq 1$, let $P_1, \ldots, P_k$ be posets. Then
$$
\log_2 \mathrm{R}_{k,t}(\mathcal{C}\,|\,P_1, \ldots, P_k) \leq \operatorname{R}_{k,t}(\mathcal{B}\,|\,P_1, \ldots, P_k) \leq \mathrm{R}_{k,t}(\mathcal{C}\,|\,P_1, \ldots, P_k)-1.
$$
\end{theorem}
\fi

\if0
\begin{remark}
\textcolor{red}{In Definition \ref{Defi-Rposet}, for a given poset $P$ and any poset $Q\supseteq P$, there exists a poset $R$ such that $R\longrightarrow Q$, that is, any coloring of $P$ copies of $R$ with a
fixed number $k$ of colors, there exists a monochromatic $Q$. Here, $Q$ is any poset, and $R$ depends on $Q$. In Definition \ref{Defi-Wposet}, suppose that $Q=P_1=P_2=\cdots =P_k$. For given $R_n$ and $Q$, we want to find $\min\{n:R_n\longrightarrow Q\}$, that is, any $k$-coloring of $P$ copies ($t$-chains) of $R$ there exists a monochromatic $Q$.}
\end{remark}
\fi

We will focus mainly on the following posets,
see Figure \ref{some_posets_fig}.
\begin{itemize}
\item The \emph{$n$-anti-chain}, denoted $A_n$,
is the poset in which any two elements are not comparable.

\item The \emph{$s$-matching}, denoted $M_s$,
is the poset with elements $\{x_1,\ldots,x_s\}\cup \{y_1,\ldots,y_s\}$
where $x_i\leq y_i$ for all $i$,
and all other pairs are not comparable.

\item The \emph{$(r,s)$-butterfly}, denoted by $\bowtie^s_r$,
is
the poset with elements $\{x_1,\ldots,x_r\}\cup \{y_1,\ldots,y_s\}$
where $x_i\leq y_j$ for all $i$ and $j$
and all other pairs are not comparable.
We use $\bowtie$ to denote $\bowtie^2_2$.

\item 
The \emph{$r$-diamond}, denoted by $\lozenge_r$, is
the poset with elements $\{x,y_1,\ldots,y_r,z\}$ such that
$x\leq y_i\leq z$ for all $i$ and, for all $1\leq i<j\leq r$,
the elements $y_i$ and $y_j$ are incomparable.

\end{itemize}

\begin{figure}
\begin{center}
\begin{tabular}{|c|c|c|c|c|}
\hline $
\begin{array}{l}
\setlength{\unitlength}{1mm}
\begin{picture}(20,25)
\unitlength 1mm \linethickness{0.4pt}

\put(10,23){\circle*{1.20}}\put(10,18){\circle*{1.20}}\put(10,13){\circle*{1.20}}
\put(10,7){\circle*{1.20}}\put(10,2){\circle*{1.20}}

\put(10,11){\circle*{0.50}}\put(10,10){\circle*{0.50}}
\put(10,9){\circle*{0.50}}

\put(10,23){\line(0,-1){10}}\put(10,7){\line(0,-1){5}}

\put(14,2){\makebox(0,0)[cc]{$x_1$}}\put(14,7){\makebox(0,0)[cc]{$x_{2}$}}\put(14,23){\makebox(0,0)[cc]{$x_n$}}
\put(16,18){\makebox(0,0)[cc]{$x_{n-1}$}}
\end{picture}
\end{array}
$& $
\begin{array}{l}
\setlength{\unitlength}{1mm}
\begin{picture}(20,25)
\unitlength 1mm \linethickness{0.4pt}

\put(0,15){\circle*{1.20}}\put(5,15){\circle*{1.20}}
\put(10,15){\circle*{1.20}}\put(18,15){\circle*{1.20}}

\put(13,15){\circle*{0.50}}\put(14,15){\circle*{0.50}}\put(15,15){\circle*{0.50}}


\put(1,12){\makebox(0,0)[cc]{$x_1$}}\put(6,12){\makebox(0,0)[cc]{$x_2$}}\put(11,12){\makebox(0,0)[cc]{$x_3$}}
\put(19,12){\makebox(0,0)[cc]{$x_n$}}
\end{picture}
\end{array}
$& $
\begin{array}{l}
\setlength{\unitlength}{1mm}
\begin{picture}(20,25)
\unitlength 1mm \linethickness{0.4pt}

\put(1,18){\circle*{1.00}}\put(8,18){\circle*{1.00}}\put(8,8){\circle*{1.00}}
\put(18,18){\circle*{1.00}}\put(18,8){\circle*{1.00}}\put(1,8){\circle*{1.00}}

\put(8,18){\line(0,-1){10}}\put(18,18){\line(0,-1){10}}\put(1,18){\line(0,-1){10}}

\put(14,13){\circle*{0.50}}\put(13,13){\circle*{0.50}}\put(12,13){\circle*{0.50}}

\put(2,21){\makebox(0,0)[cc]{$y_1$}}\put(9,21){\makebox(0,0)[cc]{$y_2$}}\put(18,21){\makebox(0,0)[cc]{$y_s$}}
\put(2,5){\makebox(0,0)[cc]{$x_1$}}\put(9,5){\makebox(0,0)[cc]{$x_2$}}\put(18,5){\makebox(0,0)[cc]{$x_s$}}

\end{picture}
\end{array}
$ & $
\begin{array}{l}
\setlength{\unitlength}{1mm}
\begin{picture}(20,25)
\unitlength 1mm \linethickness{0.4pt}

\put(1,18){\circle*{1.00}}\put(8,18){\circle*{1.00}}\put(8,8){\circle*{1.00}}
\put(20,18){\circle*{1.00}}\put(20,8){\circle*{1.00}}\put(1,8){\circle*{1.00}}

\put(8,18){\line(0,-1){10}}\put(1,8){\line(2,1){19}}
\put(20,18){\line(0,-1){10}}\put(1,18){\line(0,-1){10}}
\put(1,18){\line(2,-1){19}}\put(1,8){\line(2,3){7}}\put(8,8){\line(-2,3){7}}
\put(8,8){\line(5,4){12}}\put(8,18){\line(5,-4){12}}

\put(14,18){\circle*{0.50}}\put(13,18){\circle*{0.50}}\put(12,18){\circle*{0.50}}
\put(14,8){\circle*{0.50}}\put(13,8){\circle*{0.50}}\put(12,8){\circle*{0.50}}

\put(2,21){\makebox(0,0)[cc]{$y_1$}}\put(9,21){\makebox(0,0)[cc]{$y_2$}}\put(20,21){\makebox(0,0)[cc]{$y_s$}}
\put(2,5){\makebox(0,0)[cc]{$x_1$}}\put(9,5){\makebox(0,0)[cc]{$x_2$}}\put(20,5){\makebox(0,0)[cc]{$x_r$}}

\end{picture}
\end{array}
$ & $\begin{array}{l} \setlength{\unitlength}{1mm}
\begin{picture}(20,25)
\unitlength 1mm \linethickness{0.4pt}

\put(13,17){\circle*{1.20}}\put(8,12){\circle*{1.20}}\put(4,12){\circle*{1.20}}
\put(18,12){\circle*{1.20}}\put(13,7){\circle*{1.20}}\put(15,12){\circle*{0.50}}
\put(16,12){\circle*{0.50}}\put(14,12){\circle*{0.50}}

\put(13,17){\line(-1,-1){5}}\put(13,17){\line(1,-1){5}}
\put(13,7){\line(-1,1){5}}\put(13,7){\line(1,1){5}}
\put(4,12){\line(5,3){9}}\put(4,12){\line(5,-3){9}}

\put(12,19){\makebox(0,0)[cc]{$z$}}\put(1,12){\makebox(0,0)[cc]{$y_1$}}
\put(12,12){\makebox(0,0)[cc]{$y_2$}}\put(21,12){\makebox(0,0)[cc]{$y_r$}}
\put(12,5){\makebox(0,0)[cc]{$x$}}
\end{picture}
\end{array}
$\\
\cline{1-5} $n$-chain & $n$-anti-chain  & $s$-matching &
$(r,s)$-butterfly & $r$-diamond \\
\cline{1-5} $C_n$ & $A_n$  & $M_s$ &
$\bowtie^s_r$ & $\lozenge_r$ \\
\cline{1-5}
\end{tabular}
\end{center}
\caption{Some posets.}
\label{some_posets_fig}
\end{figure}

We now introduce some known results for poset Ramsey numbers.
For the case of a $2$-coloring (i.e.~$k=2$) of a set of a Boolean poset (i.e.~$t=1$),
the Boolean Ramsey number $\operatorname{R}^{\sharp}(\mathcal{B}\,|\,B_{n},B_{m})$
for Boolean lattices $B_n$ and $B_m$
has been studied by
Axenovich and Walzer \cite{AW17, AW23}, Lu and Thompson \cite{LT22},
Gr\'{o}sz, Methuku, and Tompkins \cite{GMT23}, Bohman and Peng \cite{BP2021}, and Walzer \cite{Walzer15}.
The currently best-known bounds for $\operatorname{R}^{\sharp}(\mathcal{B}\,|\,B_{m},B_{n})$
are 
$\operatorname{R}^{\sharp}(\mathcal{B}\,|\,B_{n},B_{n})
\leq n^2 -n +2$
for $n \geq 3$
and
$\operatorname{R}^{\sharp}(\mathcal{B}\,|\,B_{m},B_{n})
\leq \lceil (m-1 + \frac{2}{m+1})n + \frac{1}{3}m + 2\rceil$
for $n > m \geq 4$,
obtained by
Lu and Thompson \cite{LT22}.
%
\if0
The following is the currently known best possible bounds:

\begin{theorem}
Let $m,n,m$ be positive integers with $n \geq m \geq 1$.
\begin{itemize}
\item
$\operatorname{R}^{\sharp}(\mathcal{B}\,|\,B_{1},B_{n}) = n+1$,
$\operatorname{R}^{\sharp}(\mathcal{B}\,|\,B_{2},B_{2}) = 4$
and
$7 \leq \operatorname{R}^{\sharp}(\mathcal{B}\,|\,B_{3},B_{3}) \leq 8$.
(Axenovich and Walzer \cite{AW17}, see also Walzer \cite{Walzer15})
\item
$\operatorname{R}^{\sharp}(\mathcal{B}\,|\,B_{2},B_{3}) = 5$.
(Lu and Thompson \cite{LT22})
\item
$m + \frac{1}{15} \cdot \frac{n}{\log_2 n}
\leq \operatorname{R}^{\sharp}(\mathcal{B}\,|\,B_{2},B_{n})
\leq n+c\frac{n}{\log_2 n}$
for some $c >2$.
(Axenovich and Winter \cite{AW23} for the lower bound,
and Gr\'{o}sz, Methuku and Tompkins \cite{GMT23} for the upper bound)
\item
$\operatorname{R}^{\sharp}(\mathcal{B}\,|\,B_{2},B_{n})
\leq \lceil\frac{37}{16}n+\frac{55}{16}\rceil$.
(Lu and Thompson \cite{LT22})
\item
$2n+1 \leq
\operatorname{R}^{\sharp}(\mathcal{B}\,|\,B_{n},B_{n})
\leq n^2 -n +2$
for $n \geq 3$.
(Cox and Stolee \cite{CS18} and Bohman and Peng \cite{BP2021} for the lower bound,
and Lu and Thompson \cite{LT22} for the upper bound)
\item
$n+m+1 \leq
\operatorname{R}^{\sharp}(\mathcal{B}\,|\,B_{m},B_{n})$
for $n \geq 68$.
(Gr\'{o}sz, Methuku and Tompkins \cite{GMT23})
\item
$\operatorname{R}^{\sharp}(\mathcal{B}\,|\,B_{m},B_{n})
\leq \lceil (n-1 + \frac{2}{n+1})m + \frac{1}{3}n + 2\rceil$
for $m > n \geq 4$.
(Lu and Thompson \cite{LT22})
\end{itemize}
\end{theorem}
\fi
Axenovich and Winter \cite{AW23, AW24} and Winter \cite{Winter23, WinterII, WinterIII}
studied the behavior of the Boolean Ramsey number $\operatorname{R}^{\sharp}(\mathcal{B}\,| P,B_n)$ for a fixed
poset $P$ and a Boolean lattice $B_n$, when $n$ grows sufficiently large.
Falgas-Ravry et al.~\cite{FMTZ20}
considered Ramsey properties of random posets.

The case of $k$-coloring
with $k \geq 3$ is less known compared with the $2$-colored case.
The following are some known results for the case $t=1$.
For a Boolean lattice $B_n$ of dimension $n$,
the \emph{$i$-th level} of $B_n$ is the collection of all $i$-element subsets of $[n]$, denoted by ${[n]\choose i}$.
For a poset $P$, we write $e(P)$ for the largest $m$
such that $P$ cannot be embedded into any $m$ consecutive levels of $B_{n}$
for any $n$.

\begin{theorem}[{\upshape Cox and Stolee \cite{CS18}}]
\label{th-2-1}
Let $k$ be an integer with $k\geq 2$.
\begin{itemize}
\item[] $(i)$ $\operatorname{R}_{k}(\mathcal{B}\,|\,\bowtie)=2k+1$.

\item[] $(ii)$ $\max\{M,\sum_{i=1}^ke(P_i)\}\leq \operatorname{R}_{k}(\mathcal{B}\,|\,P_1,P_2,\ldots,P_k)\leq \sum_{i=1}^k(|P_i|-1)$,
where $M$ is the least integer with $P_i \subseteq B_M$ for all $i$.
\end{itemize}
\end{theorem}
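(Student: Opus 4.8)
The plan is to treat the two parts separately, relying throughout on the following elementary reformulation of when a subfamily $S\subseteq B_n$ contains a weak copy of $\bowtie$: such a copy exists if and only if there are two distinct elements $b_1,b_2\in S$ with at least two elements of $S$ lying strictly below both of them (dually, two distinct elements with at least two elements of $S$ strictly above both). I record two consequences. First, any single level, and any union of two consecutive levels $\binom{[n]}{j}\cup\binom{[n]}{j+1}$, is $\bowtie$-free, since two distinct $(j+1)$-sets share at most one $j$-subset, so no two elements acquire two common lower bounds; as $\bowtie$ does embed into three consecutive levels (put $\{1\},\{2\}$ below $\{1,2,3\},\{1,2,4\}$) this gives $e(\bowtie)=2$. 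Second, a chain on four elements is already a weak $\bowtie$, so every $\bowtie$-free family meets each maximal chain of $B_n$ in at most three elements.

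For the lower bound $\operatorname{R}_{k}(\mathcal{B}\,|\,\bowtie)\ge 2k+1$ I would exhibit a $k$-colouring of $B_{2k}$ with no monochromatic $\bowtie$, refining the block colouring that yields the bound $\sum_i e(P_i)=2k$ from part (ii) so as to absorb the one leftover level. Concretely, colour the $2k+1$ levels of $B_{2k}$ by giving colour $i$ the two consecutive levels $\binom{[2k]}{2i-1}\cup\binom{[2k]}{2i}$ for $1\le i\le k-1$, and giving colour $k$ the family $\{\emptyset\}\cup\binom{[2k]}{2k-1}\cup\binom{[2k]}{2k}$. Colours $1,\dots,k-1$ are unions of two consecutive levels, hence $\bowtie$-free. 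For colour $k$, its two top levels again form a $\bowtie$-free pair, and the only element lying above two elements of this class is $[2k]$; adjoining $\emptyset$ at the very bottom creates no second such element, so colour $k$ is $\bowtie$-free as well. This covers all of $B_{2k}$ with $k$ colours and no monochromatic $\bowtie$, giving $\operatorname{R}_{k}(\mathcal{B}\,|\,\bowtie)>2k$.

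For the upper bound I want every $k$-colouring of $B_{2k+1}$ to contain a monochromatic $\bowtie$; assume not, so each colour class is $\bowtie$-free. Since each class meets the $(2k+2)$-element maximal chains in at most three points, a bare pigeonhole only yields $2k+2\le 3k$, reproducing the weaker bound of part (ii); the gain must come from the rigidity of three-point intersections. Applying the reformulation to a three-point intersection $u<v<w$ of a class $S$ (first to the pair $v,w$, then to $u,v$) forces $u$ to be the unique element of $S$ strictly below $v$ and $w$ the unique element of $S$ strictly above $v$, throughout $B_{2k+1}$; iterating shows that $u$ is minimal and $w$ maximal in $S$, so three-point intersections occur only along rigidly determined minimal–internal–maximal triples, and rerouting a chain below $v$ to miss $u$ produces a chain meeting $S$ in only two points. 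The hard part, which is the main obstacle, is to convert this local rigidity into the exact count: the per-chain bound is in fact very loose (it permits up to $B_{3k-1}$), and because the Lubell sum $\sum_c L(S_c)=2k+2$ is already attained by admissible configurations, one cannot win by averaging alone. What seems to be needed is a genuine structure theorem showing that $\bowtie$-free classes achieving three points per chain are anchored at the unique extreme elements $\emptyset$ and $[2k+1]$, and then a combinatorial argument ruling out the resulting tight assembly of $k$ such classes across all $2k+2$ levels.

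Finally, part (ii) is routine. For the upper bound, fix a maximal chain of $B_N$ with $N=\sum_i(|P_i|-1)$; it has $N+1$ elements, so some colour $i$ occurs at least $|P_i|$ times, yielding a monochromatic chain on $|P_i|$ elements into which $P_i$ weakly embeds by sending a linear extension of $P_i$ to the chain in order, whence $\operatorname{R}_{k}(\mathcal{B}\,|\,P_1,\dots,P_k)\le\sum_i(|P_i|-1)$. For the lower bound, $\operatorname{R}_{k}\ge M$ follows by colouring all of $B_{M-1}$ with a single colour $j$ chosen so that $P_j\not\subseteq B_{M-1}$ (such $j$ exists by minimality of $M$): no colour then realises its target. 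And $\operatorname{R}_{k}\ge\sum_i e(P_i)$ follows by partitioning the $\sum_i e(P_i)$ levels of $B_{\sum_i e(P_i)-1}$ into consecutive blocks of sizes $e(P_1),\dots,e(P_k)$ and colouring block $i$ with colour $i$, since by definition $P_i$ cannot be embedded into $e(P_i)$ consecutive levels. Thus the entire difficulty of the theorem is concentrated in the sharpening $2k+1$ of part (i).
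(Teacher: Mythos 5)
This theorem is stated in the paper as a quoted result of Cox and Stolee \cite{CS18} with no proof given, so there is no internal argument to compare against; judged on its own terms, your proposal is incomplete. Part (ii) is correct and standard: the pigeonhole count along a maximal chain of $B_{\sum_i(|P_i|-1)}$ for the upper bound, the single-colour colouring of $B_{M-1}$ for the bound $M$, and the partition of the $\sum_i e(P_i)$ levels of $B_{\sum_i e(P_i)-1}$ into consecutive blocks of sizes $e(P_1),\dots,e(P_k)$ all work. The lower bound of part (i) is also correct: in your colouring of $B_{2k}$ each of the classes $\binom{[2k]}{2i-1}\cup\binom{[2k]}{2i}$ is $\bowtie$-free because two distinct $(j+1)$-sets share at most one $j$-subset, and the class $\{\emptyset\}\cup\binom{[2k]}{2k-1}\cup\{[2k]\}$ is $\bowtie$-free because $[2k]$ is the only element of it lying strictly above two others, whereas a copy of $\bowtie$ needs two such elements.

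The genuine gap is the upper bound $\operatorname{R}_{k}(\mathcal{B}\,|\,\bowtie)\le 2k+1$, which you explicitly leave open and which is the only substantive assertion of the theorem beyond part (ii). Your per-chain bound of three points per colour class only yields $2k+2\le 3k$, and a Lubell-function count in the style of Theorem \ref{th-condition} with $\mathcal{Q}=\{\emptyset,[N]\}$, $N=2k+1$, fails to be strict: two consecutive interior levels form a $\bowtie$-free family of Lubell mass exactly $2$, so $kL_N(\bowtie;\mathcal{Q})\ge 2k=N+1-\operatorname{lu}_N(\mathcal{Q})$, and one cannot simply ignore the extremes either, since $\{\emptyset\}\cup\binom{[N]}{1}\cup\{[N]\}$ is $\bowtie$-free with Lubell mass $3$. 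What is missing is a stability step: one must show that a $\bowtie$-free family in $B_N\setminus\{\emptyset,[N]\}$ attains Lubell mass $2$ only in essentially rigid configurations, and that the classes absorbing $\emptyset$ and $[N]$ cannot simultaneously be extremal, so that the total mass of the $k$ classes falls strictly below $2k+2$. Your observation about the rigidity of three-point chain intersections is a sensible first step toward this but, as you concede, does not close the count, so part (i)'s upper bound remains unproved in your write-up.
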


Note that the
$2$-diamond $\lozenge_2$ is a Boolean lattice $B_2$.

\begin{theorem}[{\upshape Walzer \cite{Walzer15}}]\label{th-Walzer}
Let $k$ be an integer with $k\geq 2$.
\begin{itemize}
\item[] $(i)$ $\operatorname{R}_2^{\sharp}(\mathcal{B}\,|\,\lozenge_2)=4$.

\item[] $(ii)$ $2k \leq \operatorname{R}_k^{\sharp}(\mathcal{B}\,|\,B_2)=\operatorname{R}_2^{\sharp}(\mathcal{B}\,|\,\lozenge_2)\leq 3^k-1$.

\item[] $(iii)$ For $m\geq \max\{3,k\}$, we have
$mk\leq \operatorname{R}_{k}^{\sharp}(\mathcal{B}\,|\,B_m)\leq
\frac{(m-1)^{k+2}-(m-1)^{2}-k(m-2)}{(m-2)^{2}}$.
\end{itemize}
\end{theorem}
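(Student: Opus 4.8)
The plan is to treat the lower and upper bounds separately, to reduce the upper bounds in parts $(ii)$ and $(iii)$ to two recursions in the number of colours $k$, and to record the exact value in $(i)$ by a dedicated argument. Throughout I use that $\lozenge_2=B_2$, so parts $(i)$ and $(ii)$ concern $\operatorname{R}_k^{\sharp}(\mathcal{B}\,|\,B_2)$.

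For the lower bounds $2k\le \operatorname{R}_k^{\sharp}(\mathcal{B}\,|\,B_2)$ and $mk\le \operatorname{R}_k^{\sharp}(\mathcal{B}\,|\,B_m)$, I would exhibit a colouring of $B_{mk-1}$ with no monochromatic induced copy of $B_m$. Partition the $mk$ levels $0,1,\dots,mk-1$ of $B_{mk-1}$ into $k$ consecutive blocks of $m$ levels each, and colour a set $S\subseteq[mk-1]$ by the block containing $|S|$. Since any (even weak) copy of $B_m$ contains a chain of $m+1$ elements, it meets $m+1$ distinct levels and hence cannot lie inside a single colour class, which occupies only $m$ consecutive levels; thus $\operatorname{R}_k^{\sharp}(\mathcal{B}\,|\,B_m)>mk-1$. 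Equivalently, one may invoke Theorem~\ref{th-2-1}$(ii)$ together with the monotonicity $\operatorname{R}_k(\mathcal{B}\,|\,\cdot)\le\operatorname{R}_k^{\sharp}(\mathcal{B}\,|\,\cdot)$ (an induced copy is in particular a weak copy) and the computation $e(B_m)=m$.

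For the upper bounds I would prove the two recursions
\[
\operatorname{R}_k^{\sharp}(\mathcal{B}\,|\,B_m)\le (m-1)\,\operatorname{R}_{k-1}^{\sharp}(\mathcal{B}\,|\,B_m)+(m+k-1)\qquad(m\ge 3),
\]
\[
\operatorname{R}_k^{\sharp}(\mathcal{B}\,|\,B_2)\le 3\,\operatorname{R}_{k-1}^{\sharp}(\mathcal{B}\,|\,B_2)+2,
\]
with base case $\operatorname{R}_1^{\sharp}(\mathcal{B}\,|\,B_m)=m$ (with a single colour an induced $B_m$ is forced exactly when the host has dimension $\ge m$). The constraint $m\ge\max\{3,k\}$ is preserved under $k\mapsto k-1$, so the induction closes. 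Solving the first recursion gives precisely the closed form in $(iii)$: writing $g(k):=\frac{(m-1)^{k+2}-(m-1)^2-k(m-2)}{(m-2)^2}$, one checks $g(1)=m$ and $g(k)-(m-1)g(k-1)=m+k-1$; solving the second gives $\operatorname{R}_k^{\sharp}(\mathcal{B}\,|\,B_2)\le 3^k-1$, since $3^k-1=3(3^{k-1}-1)+2$. Thus both upper bounds reduce to establishing the recursions. To prove a recursion, set $N$ equal to the claimed right-hand side and consider any $k$-colouring of $B_N$ with no monochromatic induced $B_m$; in particular colour $k$ induces no copy of $B_m$. The engine is a structural \emph{avoidance} step: from the hypothesis that the colour-$k$ class is induced-$B_m$-free, one locates an induced copy of a Boolean lattice of dimension $\operatorname{R}_{k-1}^{\sharp}(\mathcal{B}\,|\,B_m)$ inside $B_N$ that uses only colours $1,\dots,k-1$; restricting to this sublattice (itself isomorphic to a host Boolean lattice) and applying the induction hypothesis yields a monochromatic induced $B_m$ in some colour $\le k-1$, a contradiction. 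The multiplicative factor $m-1$ (respectively $3$) measures the dimension one must spend to guarantee such a colour-$k$-free sublattice, and the additive slack $m+k-1$ accounts for the levels consumed in setting up the embedding at the $k$-th step.

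The main obstacle is exactly this avoidance step, and it is where one must work with strong (induced) rather than weak embeddings: I need that an induced-$B_m$-free subfamily $F\subseteq B_N$ can be avoided by an induced copy of $B_d$ with $d$ about $(N-(m+k-1))/(m-1)$. I would build such a copy from a partition of a subset of $[N]$ into $d$ nonempty blocks, whose block-unions form an induced $B_d$, selecting the blocks greedily so that every block-union avoids $F$; controlling how an $m$-subcube-free set $F$ can obstruct the blocks is the delicate part, and it is precisely here that the case $m=2$ degenerates (the factor $m-1$ collapses to $1$), forcing the separate, weaker factor-$3$ recursion that yields $3^k-1$ in $(ii)$. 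Finally, part $(i)$ records the exact value $\operatorname{R}_2^{\sharp}(\mathcal{B}\,|\,\lozenge_2)=4$: the lower bound is the $k=m=2$ instance of the construction above, while the upper bound $\le 4$ improves on the general bound $3^2-1=8$, and I would obtain it by a direct analysis of $2$-colourings of $B_4$, showing every such colouring contains a monochromatic induced diamond.
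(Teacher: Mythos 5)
This theorem is imported from Walzer's thesis and the paper gives no proof of it, so there is no internal proof to compare against; I can only assess your argument on its own terms. Your architecture is the right one and matches what the paper itself later attributes to Walzer: the lower bounds via the layered colouring of $B_{mk-1}$ are complete and correct (any copy of $B_m$ contains an $(m+1)$-chain and hence meets $m+1$ distinct levels, while each colour class occupies only $m$ consecutive levels, and $\operatorname{R}_k(\mathcal{B}\,|\,P)\le\operatorname{R}_k^{\sharp}(\mathcal{B}\,|\,P)$ goes the right way), and your algebra checks out: $g(k)=\frac{(m-1)^{k+2}-(m-1)^2-k(m-2)}{(m-2)^2}$ does satisfy $g(1)=m$ and $g(k)=(m-1)g(k-1)+m+k-1$, and $3^k-1$ does solve $R_k\le 3R_{k-1}+2$ from $R_1=2$. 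The first recurrence is exactly the one the paper quotes from Walzer in Section 5, so reducing (iii) to it is sound bookkeeping.

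The genuine gap is the step you yourself flag as ``the delicate part'': the claim that if the colour-$k$ class $F\subseteq B_N$ contains no induced copy of $B_m$, then $B_N$ contains an induced copy of $B_d$, with $d$ roughly $(N-(m+k-1))/(m-1)$, that avoids $F$ entirely. This avoidance/embedding lemma is the whole content of the upper bounds --- everything else in your write-up is solving a linear recurrence --- and you only describe a plan for it (greedily choosing blocks of a partition so that every block-union misses $F$) without proving that the greedy choice can always be completed, i.e., without quantifying how an induced-$B_m$-free family can obstruct the candidate block-unions. In particular, the provenance of the factor $m-1$ and of the additive slack $m+k-1$ is asserted rather than derived, and the reason the degenerate case $m=2$ still admits a factor-$3$ recursion (rather than none) is not established. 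Until that lemma is proved --- it is essentially the Axenovich--Walzer/Walzer embedding argument --- the upper bounds in (ii) and (iii) are not established. The finite verification of $\operatorname{R}_2^{\sharp}(\mathcal{B}\,|\,\lozenge_2)\le 4$ in (i) is likewise deferred rather than carried out, though that is a routine finite check by comparison.
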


Cox and Stolee \cite[Section 2]{CS18}
in addition gave upper bounds of the Boolean Ramsey numbers
for some particular posets with $k$ colors.

Cox and Stolee \cite[Section 3]{CS18} also 
studied the case of $k$-coloring of $t$-chains in a Boolean lattice
with $k \geq 2$ and $t \geq 2$,
mainly for particular posets $P$ with $t = 2$.
However, as long as the authors know,
there are no other work on the case with $k \geq 3$ and $t \geq 2$,
except for the chain Ramsey number
from the context of
ordered Ramsey number of hypergraphs.
With this situation in mind,
in this paper, we study poset Ramsey numbers for some cases.
The following are our main contributions,
where we consider general $k$ and $t$ for 1.~and 3.~,
and general $k$ with $t=1$ for the others.

\begin{itemize}


\item[1.]
{\bf [Section \ref{lowerprob_sec}] Lower bounds for weak poset Ramsey numbers:}
\\
For a family $\mathcal{Q}=\{Q_{n} : n\geq 1\}$ of host posets
such that $Q_n \subseteq Q_{n+1}$
and $|Q_n|<|Q_{n+1}|$
for each $n$,
$k\geq 2$ and posets $P_1,P_2,...,P_k$,
we give a lower bound of $\operatorname{R}_{k,t}(\mathcal{Q}\,|\,P_1,P_2,...,P_k)$
in terms of $|P_k|$ and the number of its $t$-chains,
where $|P_k| = \max \{|P_i| \,|\, 1 \leq i \leq k\}$
(Theorem~\ref{th-Lower-R-general}) by Lov\'{a}sz Local Lemma.
There are only a few known results
for weak poset Ramsey number for $t$-chains,
and
this theorem gives the first general lower bound.

\item[2.]
{\bf [Section \ref{M2_sec}] Upper bounds for weak poset Ramsey numbers:}
\\
In Section \ref{M2_sec},
we give an upper bound of $\operatorname{R}_k(\mathcal{B}\,|\,P)$
by the Lubell function (Theorem \ref{th-condition}),
and prove
$\operatorname{R}_k(\mathcal{B}\,|\,M_2)=k+2$
and
$k+2\leq \operatorname{R}_k(\mathcal{B}\,|\,M_s)\leq \max\{k+7,s\}$ for $s \geq 3$ (Theorem \ref{th-Ms}). This gives an answer to the question
by Cox and Stolee \cite[Question 6.2]{CS18}
for $P = M_2$,
where
they asked,
for a poset $P$, whether there is a constant $c=c(P)$ such that
$\operatorname{R}_k(\mathcal{B}\,|\,P) = k\cdot e(P)+c$.

\item[3.]
{\bf [Section \ref{lowerB_sec}] Lower bounds for strong poset Ramsey numbers for $B_m$:}
\\
Adapting the method for graph Ramsey numbers due to Conlon-Ferber \cite{CF21}, Wigderson \cite{Wi21} and Sawin \cite{Sa22},
we give a general lower bound for Boolean Ramsey number for $k \geq 3$ and $t \geq 2$ (Theorem \ref{th-lower-Boolean}),
which implies $\operatorname{R}_{k,t}^{\sharp}(\mathcal{B}\,|\,B_{m_1},B_{m_2},\ldots,B_{m_k}) = \Omega\left((\log_2 k) t^{m_1}+  t^{m_k}\right)$,
where $t-1 \leq m_1 \leq \cdots  \leq m_k$.

\item[4.]
{\bf [Section \ref{boo_subsec}] Upper bounds for strong poset Ramsey numbers for $B_m$}
\\
In Section \ref{boo_subsec}, 
we show 
$$
\operatorname{R}_k^{\sharp}(\mathcal{B}\,|\,B_{m})\leq (\operatorname{R}_{\lfloor k/2\rfloor}^{\sharp}(\mathcal{B}\,|\,B_{m})-2)\operatorname{R}_{\lceil k/2\rceil}^{\sharp}(\mathcal{B}\,|\,B_{m})+\operatorname{R}_{\lfloor k/2\rfloor}^{\sharp}(\mathcal{B}\,|\,B_{m}).
$$
for $k\geq 6$ and $m\geq 2$.
It is worth pointing out that 
Walzer \cite[page 68]{Walzer15} obtained the following recurrence relation:
\begin{equation*}
\begin{split}
{\rm R}_k^{\sharp}(\mathcal{B}\,|\,B_{m})
&\leq (m-1){\rm R}_{k-1}^{\sharp}(\mathcal{B}\,|\,B_{m})+m+k-1
\end{split}
\end{equation*}
for $k\geq 3$.
Considering the
bound $\operatorname{R}^{\sharp}_2(\mathcal{B}\,|\,B_{m})\leq m^2-(1-\epsilon)m\log m$
by \cite[Corollary 6.2]{Winterphd}
for $\epsilon>0$ and sufficiently large $m$,
our recurrence relation gives an upper bound
for $\operatorname{R}^{\sharp}_k(\mathcal{B}\,|\,B_{m})$,
which is better than Theorem \ref{th-Walzer} (iii),
with the same leading-order behavior as those of Walzer,
but improves the second-largest term in the expression.


\item[5.]
{\bf [Section \ref{dia_subsec}] Upper bounds for strong poset Ramsey numbers for $r$-diamonds:}
\\
As in Theorem \ref{th-Walzer} (ii),
Walzer \cite{Walzer15} proved that $2k
\leq \operatorname{R}_k^{\sharp}(\mathcal{B}\,|\,B_2)=\operatorname{R}_k^{\sharp}(\mathcal{B}\,|\,\lozenge_2)\leq 3^k-1$.
In Section \ref{dia_subsec},
we give an upper bound of  $\operatorname{R}_k^{\sharp}(\mathcal{B}\,|\,\lozenge_r)$
for general $r \ge 2$
(Theorem \ref{BR22-B2}),
which, in the case $r=2$, improves Walzer's exponential upper bound
$3^k-1$ to the linear bound $5k-3$,
providing strong evidence that
$\operatorname{R}_k^{\sharp}(\mathcal{B}\,|\,\lozenge_2)$
grows linearly in $k$.

\end{itemize}

\if0
In our paper, 
we use the following notation.
The \emph{$k$-th level} of $B_n$ is the collection of all $k$-element subsets of $[n]$, denoted by ${[n]\choose k}$.
Let $P$ be a poset.
Let $|P|$ denote the number of elements in $P$.
The \emph{height} $h(P)$ of $P$ is
the maximum number $t$ such that $P$ contains a $t$-chain.
Recall that we write $e(P)$ for the largest $m$
such that $P$ cannot be embedded into any $m$ consecutive layers of $B_{n}$
for any $n$.
\fi

\if0

\begin{table}[h]\footnotesize
\caption{$\operatorname{R}^{\sharp}(\mathcal{B}\,|\,B_{n},B_{m})$\,.} \label{tab:1} \centering
\begin{tabular}{ccc}
\cline{1-3}
\hline
Conditions  & Results &  Reference \\[0.1cm]
\hline
general $n$ and $m$ & $[n+m,nm+n+m]$& Axenovich and Walzer \cite{AW17} \\[0.1cm]
general $n$ and $m$ & $[n+m+1,+\infty)$& Gr\'{o}sz et al. \cite{GMT23} \\[0.1cm]
general $n=m$ & $[2n,n^2+2n]$& Axenovich and Walzer \cite{AW17} \\[0.1cm]
general $n=m$ & $(-\infty,n^2+1]$& Walzer \cite{Walzer15} \\[0.1cm]
general $n=m$ & $[2n+1,+\infty)$ & Cox and Stolee \cite{CS18}; Bohman and Peng \cite{BP2021} \\[0.1cm]
general $n=m$ & $(-\infty,n^2-n+2]$& Lu and Thompson \cite{LT22} \\[0.1cm]
general $n$ and $m=2$ & $(-\infty,2n+2]$& Axenovich and Walzer \cite{AW17} \\[0.1cm]
general $n$ and $m=2$ & $(-\infty,\frac{5n}{3}+2]$& Lu and Thompson \cite{LT22} \\[0.1cm]
general $n$ and $m=2$ & $(-\infty,\left(m-1+2/(m+1)\right)n+\frac{1}{3}m+2]$& Lu and Thompson \cite{LT22} \\[0.1cm]
general $n$ and $m=2$ & $[n + 3,n+\frac{(2+\epsilon)n}{\log n}]$ for large $n$& Gr\'{o}sz et al. \cite{GMT17} \\[0.1cm]
general $n$ and $m=3$ & $(-\infty,\lceil\frac{39}{16}n+\frac{55}{16}\rceil]$& Lu and Thompson \cite{LT22} \\[0.1cm]
$n=m=3$ & $[7,8]$ & Axenovich and Walzer \cite{AW17} \\ [0.1cm]
$n=2$ and $m=3$ & $5$ & Lu and Thompson \cite{LT22} \\ [0.1cm]
$n=2$ and general $m$ and $c>2$ & $(-\infty,n+c\frac{n}{\log_2n}]$ & Gr\'{o}sz et al. \cite{GMT23} \\[0.1cm]
\cline{1-3}
\end{tabular}
\end{table}

\fi

\section{Lower bounds for weak poset Ramsey numbers}
\label{lowerprob_sec}

\subsection{Lower bounds obtained from Lov\'{a}sz Local Lemma}

The probabilistic method is a powerful technique for approaching
asymptotic combinatorial problems.
In particular, Lov\'{a}sz Local Lemma is a good tool to study Ramsey problems, which was used to give lower bounds of graph Ramsey number of complete graphs; see the paper \cite{Sp77} and the book \cite[P.72]{APE92}.
In \cite{CS18}, Cox and Stolee gave a lower bound for Boolean Ramsey numbers,
where the host posets are also Boolean lattices.
One of the aims for this paper is to give a lower bound to the poset Ramsey numbers
by Lov\'{a}sz Local Lemma,
where both of the host posets in $\mathcal{Q}$ and given posets $P_1,P_2,\ldots,P_k$ in Definition \ref{Defi-Wposet} are general and the objects are $k$-colorings of $t$-chains
with $k \geq 2$ and $t \geq 2$.

\begin{theorem}\label{th-Lower-R-general}
For $k\geq 2$, 
let $P_1,P_2,\dots,P_k$ be posets in which every element lies in at least one
$t$-chain,
and let $n_i = |P_i|$ and $m_i$ be the number of $t$-chains in $P_i$ 
for $1\leq i\leq k$, respectively, where $2\leq t\leq n_1\leq n_2\leq ...\leq n_k$, and
$3 \leq n_k$.
Let $m=\min\{m_1,m_2,\dots,m_{k-1}\}$ with $m\geq 1$ and $d = {n_k\choose t}-m_k$.
Then, there exists a constant $C_0$
depending on
$k,t,P_1,\dots ,P_{k-1}$
such that
if $n$ satisfies
\begin{eqnarray*}
\ln n < \frac{m_k}{n_k+td+2}
\quad \text{and} \quad (\ln n)^{m} n^{n_{k-1}} < C_0 \left(\frac{m_k}{n_k+td+2}\right)^m,
\end{eqnarray*}
then
for any poset $Q$ with $n$ elements,
there exists a $k$-coloring of the $t$-chains in $Q$
with no monochromatic copy of $P_i$ in color $i$ for any $1\leq i\leq k$.
\end{theorem}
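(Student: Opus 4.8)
The plan is to argue by the probabilistic method, using the general (asymmetric) form of the Lov\'{a}sz Local Lemma, and to treat the color $k$ associated with the largest poset $P_k$ separately from the rest. I would color the $t$-chains of $Q$ independently at random, giving a $t$-chain color $k$ with a fixed probability $p$ (a constant chosen below) and splitting the remaining probability $1-p$ evenly among the colors $1,\dots,k-1$. For a color $i$ and a weak embedding $f\colon P_i\to Q$, let $A^{i}_{f}$ be the bad event that the copy $f(P_i)$ is monochromatic in color $i$. Because every element of $P_i$ lies in a $t$-chain, such a copy forces the images of all $m_i$ $t$-chains of $P_i$ to get color $i$; these images are distinct $t$-chains of $Q$ (as $f$ is injective), so $\Pr[A^{k}_{f}]\le p^{m_k}$ and $\Pr[A^{i}_{f}]\le\bigl((1-p)/(k-1)\bigr)^{m_i}$ for $i<k$. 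A coloring avoiding every $A^{i}_{f}$ is precisely what the theorem asks for, so it suffices to verify the local-lemma hypotheses for this event system.

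Since the $P_k$-events and the $P_i$-events ($i<k$) have radically different probabilities and neighborhood sizes, the symmetric lemma is too weak; I would use the asymmetric version, assigning one weight $x$ to every $P_k$-event and a second weight $y$ to every $P_i$-event with $i<k$. The combinatorial heart of the argument is the dependency structure: $A^{i}_{f}$ and $A^{j}_{g}$ are dependent only when the copies $f(P_i)$ and $g(P_j)$ share a $t$-chain of $Q$, so I must bound, for a fixed $t$-chain $T$ of $Q$, the number of copies of a given $P_{\ell}$ that contain $T$. This is exactly where $d=\binom{n_k}{t}-m_k$ enters: since $f$ is only a weak embedding, $T$ can be the image not only of one of the $m_k$ $t$-chains of $P_k$ but also of any of the $d$ non-chain $t$-subsets of $P_k$ whose image happens to be a chain in $Q$. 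Thus the count of copies of $P_k$ through $T$ picks up the full number $\binom{n_k}{t}=m_k+d$ of $t$-subsets, together with at most $n^{\,n_k-t}$ ways of placing the remaining vertices and a factor $t!$ for the routing; analogous estimates hold for the $P_i$ with $i<k$. Collecting the degree bounds and the logarithmic corrections $\log\binom{n_k}{t}$, and absorbing them (for $n$ large, where $\log n\ge 1$) into the coefficient of $\log n$, is what produces the denominator $n_k+td+2$ in the first displayed inequality.

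From here the two hypotheses correspond to the two families of local-lemma inequalities. Optimizing the weight $x$ for the $P_k$-events gives, after taking logarithms, a requirement of the shape $(n_k+td+2)\log n<m_k$ (with $p$ tuned so that $\log(1/p)\ge 1$), which is the first inequality. The inequalities for the $P_i$-events with $i<k$ involve the probabilities $((1-p)/(k-1))^{m_i}$ and the number $\le n^{\,n_{k-1}}$ of such copies; the binding case is the event with the fewest $t$-chains, so the exponent $m=\min\{m_1,\dots,m_{k-1}\}$ appears, and after substituting the weights already fixed in the first step one is left with a condition of the form $(\log n)^{m}\,n^{\,n_{k-1}}<C_0\,L^{m}$, where $L=m_k/(n_k+td+2)$. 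All poset- and $t$-dependent multiplicative constants ($t!$, $m_k+d$, the $(k-1)$-fold color split, and the local-lemma factor $e$) are absorbed into the single constant $C_0$ depending on $k,t,P_1,\dots,P_{k-1}$, yielding the second inequality.

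The main obstacle I expect is the dependency count rather than the final algebra. Getting an honest bound on the number of copies through a fixed $t$-chain --- correctly accounting for the weak-embedding phenomenon that non-chain $t$-subsets may map to chains, which is the source of $d$ --- and then checking that the asymmetric weights $x,y$ can be chosen so that both families of inequalities hold simultaneously (in particular that the cross terms $(1-x)^{\deg}$ in the $P_i$-inequalities stay bounded below by a constant) is the delicate part. I would also note at the outset that one may assume $n\ge n_k$, since otherwise $Q$ contains no copy of $P_k$ and a trivial coloring works, and that the argument never uses any special structure of the host: only that $Q$ has $n$ elements, so the copy- and chain-counts are bounded purely in terms of $n$. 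This is what lets the statement hold for an arbitrary host poset $Q$.
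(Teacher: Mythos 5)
Your overall strategy is the paper's: color the $t$-chains independently at random, give color $k$ probability close to $1$ and split a small probability of order $\frac{n_k+td+2}{m_k}\ln n$ evenly among colors $1,\dots,k-1$, and apply an asymmetric form of the Lov\'asz Local Lemma with one weight for the $P_k$-events and another for the rest. (One small sign slip: you write that $p$ is tuned so that $\log(1/p)\ge 1$, but the color-$k$ probability must be \emph{close to} $1$ --- it is the complementary mass $\frac{n_k+td+2}{m_k}\ln n<1$ that the first hypothesis makes available to the other colors.)

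The one step that does not work as you describe is your derivation of the denominator $n_k+td+2$, i.e.\ of where $d=\binom{n_k}{t}-m_k$ enters. You index events by individual embeddings and claim that $d$ emerges from the count of copies of $P_k$ through a fixed $t$-chain, via the factor $\binom{n_k}{t}=m_k+d$ together with $t!$ and $n^{\,n_k-t}$. But those ingredients contribute only $\log\binom{n_k}{t}+\log t!=O_t(\log n_k)$ to the exponent budget --- a bounded multiplicative constant, not the term $td\ln n$ that the denominator requires; no accounting of per-chain copy counts produces $n^{td}$. In the paper the events are indexed by $n_i$-element \emph{subsets} $S$ of $Q$ (``$S$ hosts a monochromatic copy of $P_i$''), and $d$ enters through the probability bound $\Pr[A_{S,k}]\le\binom{\binom{n_k}{t}}{m_k}(1-p)^{m_k}$, where $\binom{\binom{n_k}{t}}{m_k}=\binom{\binom{n_k}{t}}{d}\le n_k^{td}\le n^{td}$ counts the copies of $P_k$ inside a fixed $n_k$-set; the dependency degrees are then bounded trivially by the total number of events of each type, with no per-chain analysis at all. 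Your per-embedding scheme, executed honestly, carries no such $\binom{\binom{n_k}{t}}{m_k}$ factor and would close under the weaker requirement $\ln n<\frac{m_k}{n_k+2}$ (roughly), which is implied by the stated hypothesis --- so the theorem would still follow --- but you should either say explicitly that you are proving a stronger statement, or switch to subset-indexed events to recover the stated constants. Also be careful with the refined per-chain degree bound you propose: the prefactor $\binom{n_i}{t}\binom{n_j}{t}t!$ can exceed $n^{t}$, in which case it is worse than the trivial bound $\binom{n}{n_j}$ that the paper uses, so you should take the minimum of the two rather than rely on the refined count.
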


Note that $C_0$ does not depend on $P_k$.
Therefore,
Theorem \ref{th-Lower-R-general} implies the following.
Let $\mathcal{Q}=\{Q_{s} : s \geq 1\}$ such that $Q_{s} \subseteq Q_{s+1}$ and $|Q_{s}|<|Q_{s+1}|$ for each $s \geq 1$,
and let $k\geq 2$ and $P_1,P_2, \dots ,P_k$ be as in the assumption of Theorem \ref{th-Lower-R-general}.
Then, for every $\varepsilon > 0$ there exists $n_0$ such that,
whenever $n_k \ge n_0$, we can choose an integer $s$ with
\[
|Q_s|= O\left(\left(\frac{m_k}{n_k+td+2}\right)^{\frac{m}{n_{k-1}+\varepsilon}}\right),
\]
where we regard $k, t, n_1, \dots , n_{k-1}, m_1, \dots , m_{k-1}$ as constants.
In particular, for all such $n_k$ we have
$\operatorname{R}_{k,t}(\mathcal{Q}\,|\,P_1,P_2,\dots,P_k) > s$.

To prove Theorem \ref{th-Lower-R-general},
we use
the probabilistic result,
due to Lov\'{a}sz, which fundamentally improves the existence argument
in many instances.
Let $A_1,\ldots,A_n$ be events in a probability space $\Omega$. We
say that the graph $\Gamma$ with vertex set $\{A_1,A_2,\ldots,A_n\}$ is a
\emph{dependency graph} if 
two events $A_{i}$ and $A_j$ are adjacent if and only if $A_i$ and $A_{j}$ are dependent.
%
\if0
\begin{theorem} {\upshape (\textbf{Lov\'{a}sz Local Lemma} \cite{ErdosLovasz})}\label{th-LLL}
Let $A_1,\ldots,A_n$ be events in a probability space $\Omega$ with
dependency graph $\Gamma$. Suppose that there exists
$x_1,\ldots,x_n$ with $0<x_i\leq 1$ such that
$$
\Pr[A_i]<(1-x_i)\prod_{\{i,j\}\in \Gamma}x_j, \text{ for } 1\leq i\leq n.
$$
Then $\Pr[\cap_{i} \overline{A_i}]>0$.
\end{theorem}
\fi
\if0
A slightly more convenient form of the local lemma results from the
following observation. Set
$$
y_i=\frac{1-x_i}{x_i\Pr[A_i]},
$$
so that
$$
x_i=\frac{1}{1+y_i\Pr[A_i]}.
$$

Since $1+z\leq \exp(z)$, we have:
\fi
The following is a corollary of Lov\'{a}sz Local Lemma \cite{ErdosLovasz},
whose proof can be bound in \cite{GRS90}.

\begin{corollary}{\upshape \cite{GRS90}}\label{Lemma-LLL}
Suppose that $A_1,\ldots,A_n$ are events in a probability space
having dependency graph $\Gamma$.
If there exist positive numbers
$y_1,y_2,\ldots,y_n$ satisfying
$$
\ln y_i>\sum_{\{i,j\}\in \Gamma}y_j\Pr[A_j]+y_i\Pr[A_i],
$$
for $1\leq i\leq n$, then
$$
\Pr\left[\bigcap_{i=1}^n \overline{A_i}\right]>0.
$$
\end{corollary}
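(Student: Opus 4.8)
The plan is to deduce Corollary~\ref{Lemma-LLL} from the general (asymmetric) form of the Lov\'asz Local Lemma \cite{ErdosLovasz} by the standard reparametrization; no probabilistic machinery beyond that lemma is required. Recall the form matching the paper's convention: if $A_1,\ldots,A_n$ have dependency graph $\Gamma$ and there exist reals $x_1,\ldots,x_n$ with $0<x_i\le 1$ such that
\[
\Pr[A_i] < (1-x_i)\prod_{\{i,j\}\in\Gamma}x_j
\]
for every $i$, then $\Pr[\bigcap_{i=1}^n\overline{A_i}]>0$. Thus it suffices, given the positive numbers $y_1,\ldots,y_n$ from the hypothesis, to produce admissible $x_i$ for which this premise holds.

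First I would dispose of a degenerate case: we may assume every $\Pr[A_i]>0$, since events of probability zero can be deleted without affecting either the assumed inequalities (a zero term drops from each sum) or the conclusion. Then I would set
\[
x_i=\frac{1}{1+y_i\Pr[A_i]}\qquad(1\le i\le n),
\]
so that $1-x_i=\dfrac{y_i\Pr[A_i]}{1+y_i\Pr[A_i]}=y_i\Pr[A_i]\,x_i$. Since $y_i>0$ and $\Pr[A_i]>0$, the denominator exceeds $1$, whence $0<x_i<1$, so the local lemma is applicable. (Note also that the right-hand side of the assumed inequality is a sum of nonnegative terms, forcing $\ln y_i>0$, i.e.\ $y_i>1$; this is consistent but not needed directly.)

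The key step is the algebraic reduction of the premise to the assumed logarithmic condition. Substituting the expression for $1-x_i$ and, since $\Pr[A_i]>0$, clearing $\Pr[A_i]$, the desired bound $\Pr[A_i]<(1-x_i)\prod_{\{i,j\}\in\Gamma}x_j$ becomes equivalent to
\[
(1+y_i\Pr[A_i])\prod_{\{i,j\}\in\Gamma}\bigl(1+y_j\Pr[A_j]\bigr) < y_i.
\]
Applying the elementary inequality $1+z\le e^{z}$ to each factor bounds the left-hand side above by $\exp\!\bigl(y_i\Pr[A_i]+\sum_{\{i,j\}\in\Gamma}y_j\Pr[A_j]\bigr)$, so it is enough to verify
\[
y_i\Pr[A_i]+\sum_{\{i,j\}\in\Gamma}y_j\Pr[A_j] < \ln y_i,
\]
which is exactly the hypothesis; strict monotonicity of $\exp$ then yields the strict bound above. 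Hence the premise of the general local lemma holds for this choice of $x_i$, and the conclusion $\Pr[\bigcap_{i=1}^n\overline{A_i}]>0$ follows.

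There is no genuine analytic difficulty here: the whole content is the substitution $x_i=(1+y_i\Pr[A_i])^{-1}$ together with $1+z\le e^{z}$. The only points demanding care are bookkeeping ones — confirming that the chosen $x_i$ land in $(0,1)$ (which is why the probability-zero events must be removed first), and tracking the direction of each inequality through the clearing of denominators so that the reduction lands on the assumed logarithmic condition and not its reverse.
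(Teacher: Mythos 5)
Your proof is correct and follows exactly the route the paper intends: the substitution $x_i=(1+y_i\Pr[A_i])^{-1}$ into the asymmetric Lov\'asz Local Lemma together with $1+z\le e^{z}$, which is the standard derivation in \cite{GRS90} that the paper cites (and sketches in its source). Your explicit treatment of the degenerate case $\Pr[A_i]=0$, which would otherwise force $x_i=1$ and break the premise, is a careful touch that the standard presentation glosses over.
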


\if0
Lov\'{a}sz Local Lemma is a good tool to study Ramsey problems, which was used to give lower bounds of graph Ramsey number of complete graphs; see the paper \cite{Sp77} and the book \cite[P.72]{APE92}. In \cite{CS18}, Cox and Stolee gave a lower bound for Ramsey numbers of Boolean lattices, where the host poset is also a Boolean lattice.
One of aims for this paper is to study the poset Ramsey numbers such that both of
the set $\mathcal{Q}$ of host posets and given posets $P_1,P_2,\ldots,P_k$ in Definition \ref{Defi-Wposet} are general and the objects are $t$-chains (not element coloring) by this Lemma.

\fi

\subsection{Proof of Theorem \ref{th-Lower-R-general}}
\label{prooflower_sec}

\begin{proof}[Proof of Theorem \ref{th-Lower-R-general}]
Let
$a=\max\left\{{{n_i\choose t}\choose m_i}\,|\,1\leq i\leq k-1\right\}.$
In this proof, we denote by $e$ the Euler's constant.
Define the constant $C_0$ depending on $k,t,P_1,\dots ,P_{k-1}$ as

\begin{eqnarray*}
C_0 &=& \frac{(k-1)^{m-1}}{2ea} \left(\frac{n_{1}}{e}\right)^{n_{1}}.
\end{eqnarray*}
We assume
\begin{eqnarray*}
\ln n < \frac{m_k}{n_k+td+2},
\quad \text{and} \quad
(\ln n)^{m} n^{n_{k-1}} < C_0 \left(\frac{m_k}{n_k+td+2}\right)^m,
\end{eqnarray*}
and let $Q$ be a poset with $|Q| = n$.

If $n <n_{k}$,  then we can give a $k$-coloring of $t$-chains in $Q$ such that all $t$-chains are colored $k$. Since  $m = \min\{m_1,m_2,\dots,m_{k-1}\} \geq 1$, it follows that there is no monochromatic copy of $P_i$ in color $i$ for any $1\leq i\leq k$.
Thus,
we may assume $n \geq n_{k}$.

Let
$$p=\frac{n_k+td+2}{m_k} (\ln n),\label{equ2-7}$$
and let
$$p_1=\cdots=p_{k-1}=\frac{p}{k-1} = \frac{n_k+td+2}{(k-1)m_k} (\ln n)
\quad \text{and} \quad p_k=1-p.$$
Since
$\ln n < \frac{m_k}{n_k+td+2}$,
we see $0 < p_i < \frac{1}{k-1}$ for any $1 \leq i \leq k-1$ and $0< p_k<1$.
We color the $t$-chains in $Q$ so that
each $t$-chain is independently colored
by the color $i \ (1\leq i\leq k)$ with probability $p_i$.
For each $n_i$-element subset $S$ of $Q$, let $A_{S,i}$ be the event
that the subposet induced by $S$ contains a monochromatic copy of $P_i$ with color $i$.
Since the subposet induced by $S$ contains at most ${n_i \choose t}$ many $t$-chains and
at most ${{n_i \choose t}\choose m_i}$ copies of $P_i$,
we have
$$
\Pr[A_{S,i}]\leq {{n_i \choose t}\choose m_i}p_{i}^{m_i}=
\left\{
\begin{array}{ll}
{{n_i\choose t}\choose m_i}(\frac{p}{k-1})^{m_i} & \mbox{for~$1\leq i\leq k-1$,} \\[0.2cm]
{{n_k\choose t}\choose m_k}(1-p)^{m_k} & \mbox{for~$i=k$.}
\end{array}
\right.
$$
To prove Theorem \ref{th-Lower-R-general},
it suffices to show
$\Pr[\bigcap_{i=1}^k \bigcap \overline{A_{S,i}}]>0$,
where $\overline{A_{S,i}}$ is the complementary event of $A_{S,i}$
and
the second intersection takes over all $n_i$-element subsets $S$ of $Q$.

Let $\Gamma$ denote the dependency graph
with $\sum_{i=1}^k{n\choose n_i}$ vertices,
which correspond to all possible events $A_{S,i}$,
where there is an edge between $A_{S,i}$ and $A_{S',j}$
if and only if $S \cap S'$ contains a $t$-chain
(i.e., the events $A_{S,i}$ and $A_{S',j}$ are dependent).
For $1 \leq i,j \leq k$,
let $N_{A_iA_j}$ denote the maximum number of vertices of the form $A_{S',j}$
that are adjacent to $A_{S,i}$,
where the maximum takes over all $n_i$-element subsets $S$ of $Q$.

\begin{fact}
\label{fact_AiAj}
$$
N_{A_iA_j}+1 <
\left\{
\begin{array}{ll}
\left(\frac{e}{n_{1}}\right)^{n_{1}} n^{n_{k-1}}& \mbox{{\rm if}~$1\leq i\leq k$ {\rm and} $1\leq j\leq k-1$,}
\\[0.2cm]
\left(\frac{e}{n_k}\right)^{n_k} n^{n_k}
& \mbox{{\rm if}~$j=k$.}
\end{array}
\right.
$$
\end{fact}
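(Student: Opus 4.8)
The plan is to bound $N_{A_iA_j}$ by a quantity depending only on $n$ and the sizes $n_1,\dots,n_k$, and then to match this quantity against the stated right-hand side using an elementary factorial estimate. Fix an $n_i$-element subset $S$ of $Q$. Every vertex of the form $A_{S',j}$ is indexed by an $n_j$-element subset $S'$ of $Q$, and there are exactly $\binom{n}{n_j}$ of these; hence, trivially, $N_{A_iA_j}\le\binom{n}{n_j}$. Note that this count is independent of $i$, which is exactly what the uniform statement over $1\le i\le k$ requires. One can of course do better by exploiting adjacency: $A_{S',j}$ is adjacent to $A_{S,i}$ only if $S\cap S'$ contains a $t$-chain, so $S'$ must contain one of the at most $\binom{n_i}{t}$ many $t$-subsets of $S$ and can then be completed in $\binom{n-t}{n_j-t}$ ways, giving $N_{A_iA_j}\le\binom{n_i}{t}\binom{n-t}{n_j-t}$. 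However, since the right-hand side in the statement is deliberately loose (it reads $n^{n_{k-1}}$ rather than $n^{n_j-t}$), the cruder bound $\binom{n}{n_j}$ already suffices, and I would use it to keep the computation clean.

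Second, I would convert $\binom{n}{n_j}$ into the required form. Using $\binom{n}{r}\le n^r/r!$ gives $\binom{n}{n_j}\le n^{n_j}/n_j!$. For $j\le k-1$ we have $n_1\le n_j\le n_{k-1}$, so $n^{n_j}\le n^{n_{k-1}}$ and $n_j!\ge n_1!$, whence $\binom{n}{n_j}\le n^{n_{k-1}}/n_1!$; for $j=k$ we simply keep $\binom{n}{n_k}\le n^{n_k}/n_k!$. The passage to the stated bound is then the elementary inequality $r!>(r/e)^r$, i.e.\ $1/r!<(e/r)^r$, applied with $r=n_1$ (resp.\ $r=n_k$), which places $n^{n_{k-1}}/n_1!$ strictly below $(e/n_1)^{n_1}n^{n_{k-1}}$ (resp.\ $n^{n_k}/n_k!$ strictly below $(e/n_k)^{n_k}n^{n_k}$).

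The one point that requires genuine care is the additive $+1$ together with the strict inequality: I must show not merely $\binom{n}{n_j}<\text{RHS}$ but that the gap exceeds $1$. Writing the gap in the case $j\le k-1$ as $n^{n_{k-1}}\big((e/n_1)^{n_1}-1/n_1!\big)$ and using $n\ge n_k\ge n_1$ together with $n_{k-1}\ge n_1$ to get $n^{n_{k-1}}\ge n_1^{n_1}$, it suffices that $n_1^{n_1}\big((e/n_1)^{n_1}-1/n_1!\big)=e^{n_1}-n_1^{n_1}/n_1!>1$. This follows immediately from the series $e^{n_1}=\sum_{r\ge0}n_1^r/r!$ by discarding every term except $r=0$ and $r=1$, both of which survive because $n_1\ge t\ge2$; indeed $e^{n_1}-n_1^{n_1}/n_1!\ge 1+n_1>1$. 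The case $j=k$ is identical with $n_1$ replaced by $n_k$ (here $n_k\ge3\ge2$, so again the $r=0,1$ terms survive), so both bounds in the statement hold with the $+1$ absorbed. I expect this slack estimate, rather than the counting, to be the only step needing attention, since everything else is a routine chain of monotonicities.
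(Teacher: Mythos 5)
Your proof is correct and follows essentially the same route as the paper: bound $N_{A_iA_j}$ trivially by $\binom{n}{n_j}$, pass to $\left(\frac{en}{n_j}\right)^{n_j}$ via $\binom{n}{r}\le n^r/r!$ and $r!>(r/e)^r$, and finish by monotonicity of $\left(\frac{e}{x}\right)^x$ together with $n^{n_j}\le n^{n_{k-1}}$. If anything, you are more careful than the paper, which simply asserts $\binom{n}{n_j}+1<\left(\frac{en}{n_j}\right)^{n_j}$ without justifying that the gap absorbs the $+1$; your series argument $e^{n_1}-n_1^{n_1}/n_1!\ge 1+n_1$ cleanly fills that small omission.
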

\begin{proof}

Let $1\leq i\leq k$ and $1 \leq j \leq k-1$.
Recall that ${n\choose n_j} < \left(\frac{en}{n_{j}}\right)^{n_{j}}$. 
Since the number of $n_j$-element subsets of $Q$
is ${n\choose n_j}$ and $n\geq n_{k-1} \geq n_j \geq n_1$,
we have
$$
N_{A_iA_j}+1\leq {n\choose n_j}+1
< \left(\frac{en}{n_{j}}\right)^{n_{j}}
\leq \left(\frac{e}{n_{1}}\right)^{n_{1}} n^{n_{k-1}},
$$
where the last inequality is obtained by the fact that 
the function $f(x) = \left(\frac{e}{x}\right)^{x}$ is decreasing for $x \geq 1$.
For the case $j=k$,
we can similarly obtain the desired inequality,
which completes the proof of Fact \ref{fact_AiAj}.
\end{proof}

We set
$y_1=\cdots=y_{k-1}=e$ and $y_k=n$,
\if0
\begin{eqnarray*}
&&y_1=\cdots=y_{k-1}=e,
\text{ and } y_k=n.\label{equ2-8}
\end{eqnarray*}
\fi
and will show that
the following holds
for each $i = 1,2, \dots,k$:
\begin{equation}
\ln y_i >
\sum_{j=1}^k \left\{y_j {{n_j\choose t}\choose m_j}p_j^{m_j} (N_{A_iA_j}+1)\right\}.
\label{equ2-1}
\end{equation}


Note that
for $i = 1,2, \dots,k$ and an $n_i$-element subset $S$ of $Q$,
we have
\begin{equation*}
\begin{split}
\sum_{j=1}^k \left\{y_j {{n_j\choose t}\choose m_j}p_j^{m_j} (N_{A_iA_j}+1)\right\}
&>\sum_{j=1}^k \left\{y_j {{n_j\choose t}\choose m_j}p_j^{m_j} N_{A_iA_j}\right\}
+y_i {{n_i\choose t}\choose m_i}p_i^{m_i}\\[0.2cm]
& \geq \sum_{\{A_{S,i},A_{S',j}\}\in E(\Gamma)}y_{j}\Pr[A_{S',j}]+y_i\Pr[A_{S,i}].
\end{split}
\end{equation*}
Therefore,
if Eq.~\eqref{equ2-1} holds for each $i = 1,2, \dots,k$,
then it follows from Corollary \ref{Lemma-LLL} that
$\Pr[\bigcap_{i=1}^k \bigcap \overline{A_{S,i}}]>0$,
that is,
there exists a $k$-coloring of the $t$-chains in $Q$
with no monochromatic copy of $P_i$ in color $i$ for any $1\leq i\leq k$,
as desired.
\medskip



Thus, it suffices to show Eq.~\eqref{equ2-1} for $1\leq i\leq k$.
Recall
$a=\max\left\{{{n_i\choose t}\choose m_i}\,|\,1\leq i\leq k-1\right\}$.
By Fact \ref{fact_AiAj},
\begin{eqnarray*}
\lefteqn{
\sum_{j=1}^k \left\{y_j {{n_j\choose t}\choose m_j}p_j^{m_j} (N_{A_iA_j}+1)\right\}
}\\[0.2cm]
&=& \sum_{j=1}^{k-1} \left\{y_j {{n_j\choose t}\choose m_j}p_j^{m_j} (N_{A_iA_j}+1)\right\}
+y_k {{n_k\choose t}\choose m_k}p_k^{m_k} (N_{A_iA_k}+1)\\[0.2cm]
&\leq& \sum_{j=1}^{k-1} \left\{
ea\left(\frac{p}{k-1}\right)^{m}
\left(\frac{e}{n_{1}}\right)^{n_{1}} n^{n_{k-1}}
\right\}
+ y_k {{n_k\choose t}\choose m_k}(1-p)^{m_k} \left(\frac{e}{n_k}\right)^{n_k} n^{n_k}\\[0.2cm]
&=&
\frac{ea p^m}{(k-1)^{m-1}}\left(\frac{e}{n_{1}}\right)^{n_{1}} n^{n_{k-1}}
+y_k {{n_k\choose t}\choose m_k}(1-p)^{m_k} \left(\frac{e}{n_k}\right)^{n_k} n^{n_k}.
\end{eqnarray*}

Since
$p=\frac{n_k+td+2}{m_k} (\ln n)$ and
$n_1\leq n_2\leq \dots \leq n_{k-1}$,
we can calculate the upper bound of the first term as
\begin{eqnarray*}
 \frac{ea p^m}{(k-1)^{m-1}} \left(\frac{e}{n_{1}}\right)^{n_{1}} n^{n_{k-1}}
&= &
 \frac{ea}{(k-1)^{m-1}}\left(\frac{e}{n_{1}}\right)^{n_{1}} n^{n_{k-1}}\left(\frac{n_k+td+2}{m_k} (\ln n)\right)^m\\[0.2cm]
&= &
\frac{ea}{(k-1)^{m-1}} \left(\frac{e}{n_{1}}\right)^{n_{1}}\left(\frac{n_k+td+2}{m_k}\right)^m
(\ln n)^m n^{n_{k-1}}
\\[0.2cm]
&\leq&
\frac{1}{2},
\end{eqnarray*}
where the last inequality is obtained by
$C_0 = \frac{(k-1)^{m-1}}{2ea} \left(\frac{n_{1}}{e}\right)^{n_{1}}$
and $(\ln n)^{m} n^{n_{k-1}} < C_0 \left(\frac{m_k}{n_k+td+2}\right)^m$.

We next consider the second term.
Since $d = {n_k\choose t}-m_k$ and $n \geq n_k$,
we have 
\begin{eqnarray*}
&&{{n_k\choose t}\choose m_k}={{n_k\choose t}\choose d}\leq {n_k\choose t}^d\leq n_k^{td}
\leq n^{td} = \exp\{td \ln n\},
\\\\[0.1cm]
&\text{and }
&
(1-p)^{m_k}\leq \exp\{-pm_k\} = \exp\{-(n_k+td+2) \ln n \}.
\end{eqnarray*}
Since $n_k \geq 3 > e$,
we have $\left(\frac{e}{n_k}\right)^{n_k} \leq 1$.
Thus,
$y_k = n= \exp\{\ln n\}$
and 
$n^{n_k} = \exp\left\{n_k \ln n \right\}$
imply
\begin{eqnarray*}
y_k {{n_k\choose t}\choose m_k}(1-p)^{m_k} \left(\frac{e}{n_k}\right)^{n_k} n^{n_k}
&<&
\exp\big\{\left(1+td-(n_k+td+2) + n_k\right)\ln n\big\}\\[0.2cm]
&=& \exp\{- \ln n\} \ = \ n^{-1} \ \leq \ \frac{1}{2}.
\end{eqnarray*}

Thus,
\begin{eqnarray*}
\lefteqn{
\sum_{j=1}^k \left\{y_j {{n_j\choose t}\choose m_j}p_j^{m_j} (N_{A_iA_j}+1)\right\}
}\\[0.2cm]
&\leq& 
\frac{ea p^m}{(k-1)^{m-1}}\left(\frac{e}{n_{1}}\right)^{n_{1}} n^{n_{k-1}}
+y_k {{n_k\choose t}\choose m_k}(1-p)^{m_k} \left(\frac{e}{n_k}\right)^{n_k} n^{n_k}
\\[0.2cm]
&\leq& \frac{1}{2} + \frac{1}{2} = 1 \leq \ln y_i,
\end{eqnarray*}
which shows Eq.~\eqref{equ2-1} for $1 \leq i \leq k$.
This completes the proof of Theorem \ref{th-Lower-R-general}.

\if0
\medskip

\begin{case}
$i=k$
\end{case}

Note that
\begin{eqnarray*}
\lefteqn{
\sum_{j=1}^k \left\{y_j {{n_j\choose t}\choose m_j}p_j^{m_j} (N_{A_kA_j}+1)\right\}
}\\[0.2cm]
&=& \sum_{j=1}^{k-1} \left\{y_j {{n_j\choose t}\choose m_j}p_j^{m_j} (N_{A_kA_j}+1)\right\}
+y_k {{n_k\choose t}\choose m_k}p_k^{m_k} (N_{A_kA_k}+1)\\[0.2cm]
&\leq& \sum_{j=1}^{k-1} \left\{ea \left(\frac{p}{k-1}\right)^{m}
\left(\frac{e}{n_{1}}\right)^{n_{1}} n^{n_{k-1}} \right\}
+y_k {{n_k\choose t}\choose m_k}(1-p)^{m_k} \left(\frac{e}{n_k}\right)^{n_k} n^{n_{k}}\\[0.2cm]
&\leq& 
\frac{ea p^m}{(k-1)^{m-1}}
 \left(\frac{e}{n_{1}}\right)^{n_{1}} n^{n_{k-1}}
+y_k {{n_k\choose t}\choose m_k}(1-p)^{m_k} \left(\frac{e}{n_k}\right)^{n_k} n^{n_k}.
\end{eqnarray*}

By the same argument as in Case \ref{case1},
the second term can be bounded as
\begin{eqnarray*}
y_k {{n_k\choose t}\choose m_k}(1-p)^{m_k} \left(\frac{e}{n_k}\right)^{n_k} n^{n_k}
&\leq& n^{-1}.
\end{eqnarray*}

Since
$p=\frac{n_k+td+2}{m_k} (\ln n)$,
we have
\begin{eqnarray*}
\frac{ea p^m}{(k-1)^{m-1}} \left(\frac{e}{n_{1}}\right)^{n_{1}} n^{n_{k-1}}
& = &
\frac{ea}{(k-1)^{m-1}} \left(\frac{n_k+td+2}{m_k} (\ln n)\right)^m
\left(\frac{e}{n_{1}}\right)^{n_{1}} n^{n_{k-1}}
\\[0.2cm]
&= &
\frac{ea}{(k-1)^{m-1}} \left(\frac{n_k+td+2}{m_k}\right)^m
\left(\frac{e}{n_{1}}\right)^{n_{1}}
(\ln n)^m n^{n_{k-1}}.
\end{eqnarray*}

Since $C_0 = \frac{(k-1)^{m-1}}{2ae} \left(\frac{n_{1}}{e}\right)^{n_{1}}$
and
$(\ln n)^{m} n^{n_{k-1}} < C_0 \left(\frac{m_k}{n_k+td+2}\right)^m$,
we conclude that
\begin{eqnarray*}
\frac{ea p^m}{(k-1)^{m-1}} \left(\frac{e}{n_{1}}\right)^{n_{1}} n^{n_{k-1}}
< \frac{1}{2}.
\end{eqnarray*}
Therefore,
\begin{eqnarray*}
\sum_{j=1}^k \left\{y_j {{n_j\choose t}\choose m_j}p_j^{m_j} (N_{A_kA_j}+1)\right\}
&\leq& \frac{ea p^m}{(k-1)^{m-1}}
 \left(\frac{e}{n_{1}}\right)^{n_{1}} n^{n_{k-1}}
+y_k {{n_k\choose t}\choose m_k}(1-p)^{m_k} \left(\frac{e}{n_k}\right)^{n_k} n^{n_k}\\[0.2cm]
&< &\frac{1}{2} + n^{-1}
\leq \ln n = \ln y_k.
\end{eqnarray*}
This implies that Eq.~\eqref{equ2-1} holds also for $i = k$,
and this completes the proof of Theorem \ref{th-Lower-R-general}.
\if0
Let
$$
f(q)=\left(\frac{q}{c\ln q}\right)^{c}~and~g(x)=\frac{x(k-1)}{c_2},
$$
where $c=m/n_{k-1}$.
Then
$$
f'(q)=c\left(\frac{q}{c\ln q}\right)^{c-1}\cdot \frac{c(\ln q)-c}{(c\ln q)^2}=\left(\frac{q}{c\ln q}\right)^{c-1}\cdot \frac{\ln q-1} {(\ln q)^2}>0,
$$
if $q\geq e$. When $q>e$, one can see that $f(q)$ is an increasing function, and hence $f(q)\geq f(e)$. Note that $g(x)$ is an increasing function when $x\geq 0$. Since $x\geq \frac{ec_2}{k-1}$, it follows that $g(x)\geq e$, and hence
$$
f(g(x))=\left(\frac{g(x)}{c\ln g(x)}\right)^{c}=\left(\frac{(n_{k-1}+1)x(k-1)}{c_2m(\ln x(k-1)-\ln
c_2)}\right)^{m/n_{k-1}}
$$
is an increasing function on $[\frac{ec_2}{k-1},+\infty)$.

For the sufficiently large $m_k$, we have
\begin{eqnarray*}
n&\geq&
f(g(m_k))\\[0.2cm]
&=&\left(\frac{n_{k-1}m_k(k-1)}{c_2m(\ln m_k(k-1)-\ln
c_2)}\right)^{m/n_{k-1}}\\[0.2cm]
&\geq& \max\left\{\left(\frac{2(k-1)(1+\epsilon)a\cdot (n_i-t)\left(\frac{en_i}{t}\right)^t\left(\frac{e}{n_{k-1}-t}\right)^{n_{k-1}-t}}{\ln (1+\epsilon)}\right)^{1/t}, \frac{2}{\ln(1+\epsilon)}, e^{(k-1)y_1a/(c_3-1)},\right.\\[0.2cm]
&& \left. (n_{k-1}^2+2n_{k-1}-2t-1)/(t+1)\right\}\\[0.2cm]
&=&\max\{n_0,n'_0, (n_{k-1}^2+2n_{k-1}-2t-1)/(t+1)\}.
\end{eqnarray*}

Since $m_k=c_2n^{\frac{n_{k-1}}{m}}(\ln n)(k-1)^{-1}$ and $t\geq 2$, it follows from Eq. (\ref{equ2-6}) that

Since $
\left(\frac{n_{k-1}m_k(k-1)}{c_2m(\ln (m_k(k-1))-\ln c_2)}\right)^{m/n_{k-1}}
\leq n$,
we have
$$
m_k
\leq
c_2n^{n_{k-1}/m}\cdot \frac{m}{n_{k-1}}\left(\ln
m_k(k-1) - \ln c_2\right)(k-1)^{-1},
$$
and hence
$$
|Q_{b}|=n\geq \left(\frac{n_{k-1}m_k(k-1)}{c_2m(\ln m_k(k-1)-\ln
c_2)}\right)^{m/n_{k-1}}=f(g(m_k)),
$$
and so there exists a maximum integer $a$ such that
$$
n=|Q_{b}|\geq \left(\frac{n_{k-1}m_k(k-1)}{c_2m(\ln m_k(k-1)-\ln
c_2)}\right)^{m/n_{k-1}}\geq |Q_{a-1}|.
$$
Since there exists a coloring of $Q_{b}$ such that there is no monochromatic copy of $P_i$ with color $i$ in $Q_{b}$, it follows that there exists
a coloring of $Q_{a-1}$ such that there is no monochromatic copy of $P_i$ with color $i$ in $Q_{a-1}$, and hence
$$
\operatorname{R}_{k,t}(\mathcal{Q}\,|\,P_1,P_2,...,P_k)\geq a.
$$
\fi
\fi
\end{proof}


\section{Upper bounds for weak Boolean Ramsey numbers}
\label{M2_sec}

\subsection{Upper bounds obtained from Lubell functions}

For a family
$\mathcal{F}\subseteq B_N$, the \emph{Lubell function} of
$\mathcal{F}$ is defined as
$$
\operatorname{lu}_N(\mathcal{F})=\sum_{F\in \mathcal{F}}{N\choose
|F|}^{-1}.
$$
Note that $\operatorname{lu}_N(B_N) = N+1$.
As pointed out in \cite[P.~563]{CS18},
$\operatorname{lu}_N(\mathcal{F})$ can be interpreted
as the average size of $|\mathcal{F}\cap \mathcal{C}|$,
where $\mathcal{C}$ is a maximum chain in $B_N$,
i.e.~an $(N+1)$-chain in $B_N$.
\if0
In other words,
it is the expected number of elements of $\mathcal{F}$
that are visited by a random walk from the
empty set $\empty$ to the full set $[n]$ along the Hasse diagram of $B_n$.
With this explanation,
it is easy to see $|\mathcal{F}|\leq \operatorname{lu}_N(F){n\choose \lfloor
n/2\rfloor}$.
It is due to this observation that Lubell functions of
particular families have received a great deal of attention, as bounds
on the Lubell function help answer Tur\'{a}n-type questions in the
\fi
Lubell functions of particular families have been much attracted
since they could be used to answer Tur\'{a}n-type problems in Boolean lattices,
see \cite{GriggsLi, JohnstonLuMilans} for instance.
%
By the definition,
we see that the Lubell function satisfies the linearity;
that is,
if $\mathcal{F}\cap \mathcal{G}= \emptyset$, then
\begin{equation}\label{eq-lubell-union}
\operatorname{lu}_N(\mathcal{F}\cup \mathcal{G})
=\operatorname{lu}_N(\mathcal{F}) +\operatorname{lu}_N(\mathcal{G}).
\end{equation}
In addition, we use the following theorem,
which sharpened the Sperner theorem
and is known as the YBLM-inequality \cite{Bollobas65, BK07, Lubell66, Meshalkin63, Yamamoto54}.
\begin{theorem}
\label{YBLM-ineq_thm}
Let $\mathcal{F}\subseteq B_N$ be an anti-chain.
Then, we have
\begin{equation*}\label{eq-lubell-bound}
\operatorname{lu}_N(\mathcal{F})=\sum_{F\in \mathcal{F}}{N\choose |F|}^{-1}
\leq 1.
\end{equation*}
\end{theorem}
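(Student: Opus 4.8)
The plan is to prove the inequality by the classical double-counting of maximal chains, which fits exactly the averaging interpretation of $\operatorname{lu}_N$ recorded immediately before the statement. Recall that a maximum chain in $B_N$ is a sequence $\emptyset = C_0 \subset C_1 \subset \cdots \subset C_N = [N]$ with $|C_i| = i$, and there are precisely $N!$ of them, one for each permutation of $[N]$. For a fixed set $F \subseteq [N]$ with $|F| = \ell$, the number of maximum chains passing through $F$ is $\ell!\,(N-\ell)!$, since such a chain is determined by a linear ordering of the $\ell$ elements of $F$ followed by a linear ordering of the $N-\ell$ elements of $[N]\setminus F$. Equivalently, a uniformly random maximum chain $\mathcal{C}$ passes through $F$ with probability exactly $\ell!\,(N-\ell)!/N! = {N\choose \ell}^{-1}$.

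The key step exploits the anti-chain hypothesis. Since any two elements of a chain $\mathcal{C}$ are comparable while any two distinct elements of the anti-chain $\mathcal{F}$ are incomparable, no maximum chain can contain two distinct members of $\mathcal{F}$; hence $|\mathcal{F}\cap\mathcal{C}|\leq 1$ for every maximum chain $\mathcal{C}$. Summing the count of (chain, set) incidences over all $F\in\mathcal{F}$ therefore overcounts each chain at most once, giving
\[
\sum_{F\in\mathcal{F}} |F|!\,(N-|F|)! \;\leq\; N!.
\]
Dividing both sides by $N!$ yields
\[
\operatorname{lu}_N(\mathcal{F}) = \sum_{F\in\mathcal{F}} {N\choose |F|}^{-1} = \sum_{F\in\mathcal{F}} \frac{|F|!\,(N-|F|)!}{N!} \;\leq\; 1,
\]
which is the desired bound.

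There is no genuine obstacle here, as this is the standard YBLM argument; the only point requiring care is the verification that each maximum chain meets $\mathcal{F}$ in at most one element, which is precisely where the anti-chain assumption enters and which I would state explicitly. One could present the argument either purely combinatorially as above, or probabilistically by noting that the events $\{F\in\mathcal{C}\}$ for distinct $F\in\mathcal{F}$ are pairwise disjoint, so their probabilities sum to at most $1$; the two formulations are identical, and I would favor the averaging phrasing since the excerpt has already established that $\operatorname{lu}_N(\mathcal{F})$ equals the expected value of $|\mathcal{F}\cap\mathcal{C}|$ over a uniformly random maximum chain $\mathcal{C}$, whence the bound $|\mathcal{F}\cap\mathcal{C}|\leq 1$ immediately gives $\operatorname{lu}_N(\mathcal{F})\leq 1$.
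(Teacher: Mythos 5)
Your proof is correct and is precisely the classical double-counting argument for the YBLM (LYM) inequality: the paper itself does not prove this statement but only cites it as a known result \cite{Bollobas65, BK07, Lubell66, Meshalkin63, Yamamoto54}, and your argument is the standard one underlying those references, consistent with the averaging interpretation of $\operatorname{lu}_N$ that the paper records just before the statement. Both the chain count $|F|!\,(N-|F|)!$ and the key observation that a maximum chain meets an anti-chain in at most one element are stated and used correctly, so there is nothing to fix.
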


For a poset $P$,
we say that a poset $Q$ is \emph{$P$-free} if there is no copy of $P$ in $Q$.
Let $L_N(P)$ be the maximum value of $\operatorname{lu}_N(\mathcal{F})$
among all $P$-free families $\mathcal{F}\subseteq B_N \setminus \{\emptyset, [N]\}$,
which was defined in \cite{CS18}.
That is,
\begin{center}
$L_N(P)=\max \{\operatorname{lu}_N(\mathcal{F})
:\mathcal{F}\subseteq B_N\setminus \{\emptyset, [N]\}$, $\mathcal{F}$
is $P$-free$\}$.
\end{center}
This definition can be generalized as follow. For a subset $\mathcal{Q}$ of $B_N$, let
\begin{center}
$L_N(P;\mathcal{Q})=\max \{\operatorname{lu}_N(\mathcal{F})
:\mathcal{F}\subseteq B_N\setminus \mathcal{Q}$, $\mathcal{F}$
is $P$-free$\}$.
\end{center}
If $\mathcal{Q}=\{\emptyset, [N]\}$, then $L_N(P;\mathcal{Q})=L_N(P)$.
Cox and Stolee \cite[Theorem 2.5]{CS18} gave an upper bound for the weak Boolean Ramsey number
in terms of $L_N(P)$.
We extend the result by using $L_N(P;\mathcal{Q})$ as follows,
and will give its application in the next subsection.


\begin{theorem}\label{th-condition}
Let $P$ be a poset, and let $\mathcal{Q}$ be a subset of $B_N$.
If
$$
k L_{N}(P;\mathcal{Q})<N+1-\operatorname{lu}_N(\mathcal{Q}),
$$
then
$\operatorname{R}_k(\mathcal{B}\,|\,P)\leq N$.
\end{theorem}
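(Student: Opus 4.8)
The plan is to argue by contradiction, using only the definition of $L_N(P;\mathcal{Q})$ and the linearity of the Lubell function recorded in \eqref{eq-lubell-union}. Fix an arbitrary $k$-coloring of the elements of $B_N$ with colors $1,\dots,k$, and suppose toward a contradiction that it admits no monochromatic copy of $P$. Let $\mathcal{F}_i\subseteq B_N$ be the color class of color $i$, so that $\mathcal{F}_1,\dots,\mathcal{F}_k$ partition $B_N$. Since a copy of $P$ contained in a single $\mathcal{F}_i$ would be a monochromatic copy in color $i$, each color class $\mathcal{F}_i$ must be $P$-free, and this property is inherited by every subfamily.

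The key idea is to measure each color class only on the portion of the cube lying outside $\mathcal{Q}$. Set $\mathcal{F}'_i=\mathcal{F}_i\setminus\mathcal{Q}$ for $1\le i\le k$. Each $\mathcal{F}'_i$ is $P$-free (being a subfamily of the $P$-free family $\mathcal{F}_i$) and satisfies $\mathcal{F}'_i\subseteq B_N\setminus\mathcal{Q}$, so it is an admissible competitor in the maximization defining $L_N(P;\mathcal{Q})$. Hence $\operatorname{lu}_N(\mathcal{F}'_i)\le L_N(P;\mathcal{Q})$ for every $i$.

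It then remains to sum these estimates and compare against the total Lubell mass. The families $\mathcal{F}'_1,\dots,\mathcal{F}'_k$ are pairwise disjoint with $\bigcup_{i=1}^k\mathcal{F}'_i=B_N\setminus\mathcal{Q}$, so applying the additivity \eqref{eq-lubell-union} inductively across the $k$ parts gives $\sum_{i=1}^k\operatorname{lu}_N(\mathcal{F}'_i)=\operatorname{lu}_N(B_N\setminus\mathcal{Q})$. Applying \eqref{eq-lubell-union} once more to the disjoint decomposition $B_N=\mathcal{Q}\cup(B_N\setminus\mathcal{Q})$, together with $\operatorname{lu}_N(B_N)=N+1$, yields $\operatorname{lu}_N(B_N\setminus\mathcal{Q})=N+1-\operatorname{lu}_N(\mathcal{Q})$. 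Combining these relations with the per-color bound gives $N+1-\operatorname{lu}_N(\mathcal{Q})=\sum_{i=1}^k\operatorname{lu}_N(\mathcal{F}'_i)\le k\,L_N(P;\mathcal{Q})$, which contradicts the hypothesis $k\,L_N(P;\mathcal{Q})<N+1-\operatorname{lu}_N(\mathcal{Q})$. Thus some color class contains a copy of $P$, and as the coloring was arbitrary we conclude $\operatorname{R}_k(\mathcal{B}\,|\,P)\le N$.

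I do not expect a genuine technical obstacle: the argument is a short averaging/counting argument, and the only place requiring care is the bookkeeping around $\mathcal{Q}$. One must restrict each color class to $B_N\setminus\mathcal{Q}$ \emph{before} invoking $L_N(P;\mathcal{Q})$, and correctly subtract the deficit $\operatorname{lu}_N(\mathcal{Q})$ when computing the total Lubell mass of $N+1$ that is to be distributed among the $k$ colors. It is precisely this freedom to discard a controlled amount of Lubell mass (for instance the forced extremal elements $\{\emptyset,[N]\}$, or any further elements one can argue are unavailable) that makes the bound a strict strengthening of the special case $\mathcal{Q}=\{\emptyset,[N]\}$ due to Cox and Stolee.
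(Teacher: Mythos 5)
Your proposal is correct and follows essentially the same route as the paper: restrict each color class to $B_N\setminus\mathcal{Q}$, note each restricted class is $P$-free and hence has Lubell mass at most $L_N(P;\mathcal{Q})$, then sum using the additivity \eqref{eq-lubell-union} to contradict the hypothesis. The paper's version is just a more compressed writing of the same argument.
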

\begin{proof}
To show that $\operatorname{R}_k(\mathcal{B}\,|\,P)\leq N$,
suppose on the contrary that there is no monochromatic copy of $P$ in some $k$-coloring of $B_{N}$.
Let $\mathcal{F}_i$ be the family of $B_{N}\setminus \mathcal{Q}$
with color $i$, which is $P$-free. It follows from (\ref{eq-lubell-union}) that
$$
N+1-\operatorname{lu}_N(\mathcal{Q})>kL_{N}(P;\mathcal{Q})\geq \sum_{i=1}^k \operatorname{lu}_{N}(\mathcal{F}_i)=\operatorname{lu}_{N}\left(\bigcup_{i=1}^k\mathcal{F}_i\right)=
\operatorname{lu}_{N}\left(B_N\setminus \mathcal{Q}\right)=N+1-\operatorname{lu}_N(\mathcal{Q}),
$$
a contradiction.
\end{proof}

\subsection{Weak Boolean Ramsey numbers for matchings}
In this subsection
we evaluate $\operatorname{R}_k(\mathcal{B}\,|\,M_s)$ using Theorem \ref{th-condition}.
Recall that $M_s$ is the matching of size $s$.


\begin{theorem}\label{th-Ms}
Let $k, s$ be two integers with $k\geq 2$ and $s\geq 3$. Then,
$\operatorname{R}_k(\mathcal{B}\,|\,M_2)=k+2$
and
$k+2\leq \operatorname{R}_k(\mathcal{B}\,|\,M_s)\leq \max\{k+7,s\}$.
\end{theorem}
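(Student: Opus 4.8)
The plan is to prove the lower bound $\operatorname{R}_k(\mathcal{B}\,|\,M_s)\ge k+2$ by one explicit coloring that works for every $s\ge 2$, and to obtain the two upper bounds from the Lubell-function criterion of Theorem~\ref{th-condition}, applied with two different choices of the excluded set $\mathcal{Q}$. Throughout I use that a weak copy of $M_s$ in a family $\mathcal{F}\subseteq B_N$ is exactly a matching of size $s$ in the comparability graph of $\mathcal{F}$ (the incomparable pairs of $M_s$ impose no condition on a weak embedding), so $\mathcal{F}$ is $M_s$-free precisely when this comparability graph has matching number at most $s-1$.

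For the lower bound I would $k$-color the elements of $B_{k+1}$, whose $k+2$ levels are $0,1,\dots,k+1$: assign one color to levels $0$ and $1$ together (the empty set with all singletons), a second color to levels $k$ and $k+1$ together, and a distinct color to each of the levels $2,3,\dots,k-1$. This uses $2+(k-2)=k$ colors. The first class is a star with centre $\emptyset$, the second a star with centre $[k+1]$, and each remaining class is a single level, hence an antichain; in every case the comparability graph has matching number at most $1$, so no class contains a copy of $M_2$, and a fortiori none contains a copy of $M_s$. Thus $B_{k+1}$ admits a coloring with no monochromatic $M_s$, giving $\operatorname{R}_k(\mathcal{B}\,|\,M_s)\ge k+2$.

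For $M_s$ with $s\ge3$ I would take $\mathcal{Q}$ to be the four extreme levels $0,1,N-1,N$, so that $\operatorname{lu}_N(\mathcal{Q})=4$ and every set of an admissible $\mathcal{F}\subseteq B_N\setminus\mathcal{Q}$ lies at a level between $2$ and $N-2$. Fixing a maximum matching in the comparability graph of $\mathcal{F}$, the unmatched sets are pairwise incomparable (else the matching could be enlarged), hence form an antichain of Lubell value at most $1$ by Theorem~\ref{YBLM-ineq_thm}; the at most $2(s-1)$ matched sets each contribute at most $\binom{N}{2}^{-1}$, so by \eqref{eq-lubell-union} we get $L_N(M_s;\mathcal{Q})\le 1+2(s-1)\binom{N}{2}^{-1}\le 1+4/N$. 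Plugging $N=\max\{k+7,s\}$ into Theorem~\ref{th-condition} reduces the required inequality $k(1+4/N)<N-3$ to $N(N-3-k)>4k$, which holds because $N\ge k+7$ forces $N-3-k\ge4$.

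For the exact value $\operatorname{R}_k(\mathcal{B}\,|\,M_2)=k+2$ the four-level choice is too wasteful, so I would use $\mathcal{Q}=\{\emptyset,[N]\}$ (with $\operatorname{lu}_N(\mathcal{Q})=2$) and $N=k+2$, where Theorem~\ref{th-condition} demands the sharper estimate $L_N(M_2)<(N-1)/k=1+\tfrac{1}{N-2}$. Matching number at most $1$ means the comparability graph is a star or a triangle (plus isolated vertices). In the star case $\mathcal{F}$ is an antichain together with the single centre, so $L_N(M_2)\le 1+1/N$. The delicate case, which I expect to be the main obstacle, is the triangle, where $\mathcal{F}$ is a $3$-chain $A\subsetneq B\subsetneq C$ together with an antichain incomparable to it, since the crude bound here is only $1+2/N$. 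The trick I would use is to leave the top element $C$ inside the antichain and treat only $A,B$ as extra: because $B$ is the middle element, $2\le b\le N-2$ and its cost is at most $\binom{N}{2}^{-1}$, while $A$ costs at most $1/N$, so the total extra cost is at most $\frac1N+\binom{N}{2}^{-1}=\frac{N+1}{N(N-1)}$. One then checks $\frac{N+1}{N(N-1)}<\frac{1}{N-2}$, i.e.\ $(N+1)(N-2)<N(N-1)$, which is immediate. Combining the two upper bounds with the common lower bound completes the proof.
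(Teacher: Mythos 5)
Your proposal is correct, and its overall architecture --- the explicit level-coloring of $B_{k+1}$ for the lower bound, and Theorem~\ref{th-condition} with Lubell-function estimates for the upper bounds --- is the same as the paper's. Indeed, your lower-bound coloring is exactly the paper's coloring $\chi$, and your treatment of $M_s$ for $s\geq 3$ (fix a maximum matching in the comparability graph; the unmatched sets form an antichain, and the at most $2(s-1)$ matched sets each have Lubell value at most $\binom{N}{2}^{-1}$) coincides with the paper's Lemma~\ref{lem3-6} almost verbatim. The genuine difference is in the $M_2$ case. The paper determines $L_N(M_2)=1+\frac{1}{N}$ exactly (Lemma~\ref{lem2-2}): in the $C_3\sqcup A_t$ case it argues that the extremal configuration must have $X_1$ a singleton and $X_3$ of size $N-1$, and then replaces $\{X_1,X_2\}$ by the antichain $\mathcal{Z}$ of all $|X_2|$-sets between $X_1$ and $X_3$, whose Lubell value dominates; this argument needs $N\geq 5$, which is why the paper handles $k=2$ separately by quoting Walzer's result $\operatorname{R}_2^{\sharp}(\mathcal{B}\,|\,\lozenge_2)=4$ (Theorem~\ref{th-Walzer}~(i)). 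You instead settle for the weaker estimate $L_N(M_2)\leq 1+\frac{N+1}{N(N-1)}$, obtained by the simple device of leaving the top element $C$ of the $3$-chain inside the antichain and charging only $A$ (at most $\frac{1}{N}$) and the middle element $B$ (at most $\binom{N}{2}^{-1}$, since $2\leq |B|\leq N-2$); because $k\cdot\frac{N+1}{N(N-1)}<1$ when $N=k+2$, this coarser bound still satisfies the hypothesis of Theorem~\ref{th-condition}, and it is valid already for $N\geq 4$, so your proof covers $k=2$ uniformly with no appeal to Walzer. What you lose is the exact value of $L_N(M_2)$, which is of some independent interest; what you gain is a shorter, self-contained argument with no case split on $k$.
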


Recall that
for a poset $P$,
$e(P)$ is the largest integer $m$ such that $P$ cannot be embedded into any $m$ consecutive levels of $B_{N}$
for any non-negative integer $N$.
In other words,
$e(P)$ is the maximum $m$ such that
the family $\binom{[N]}{s}\cup \binom{[N]}{s+1}\cup \cdots
\cup \binom{[N]}{s+m-1}$ in $B_{N}$ does not contain a copy of $P$
for all non-negative integers $N$ and $s$.
The parameter $e(P)$ is commonly used for Tur\'{a}n-type problems in posets.
Cox and Stolee \cite[Question 6.2]{CS18} asked,
for a poset $P$, whether there is a constant $c=c(P)$ such that
$\operatorname{R}_k(\mathcal{B}\,|\,P) = k\cdot e(P)+c$.
Note that $e(M_s)=1$ for $s \geq 2$.
Thus, by Theorem \ref{th-Ms}, we have
$\operatorname{R}_k(\mathcal{B}\,|\,M_2)=k+2=k\cdot e(M_2)+2$
which implies that
the question by Cox and Stolee
is true for $P = M_2$ with $c = 2$.

We now evaluate $L_N(P)$ for the case $P = M_s$ with $s \geq 2$.
For a positive integer $s$,
the $(s,1)$-butterfly (resp.~the $(1,s)$-butterfly)
is particularly called the \emph{$s$-cup} (resp.~the \emph{$s$-cap})
and denoted by $\vee_s$ (resp.~$\wedge_s$).
The unique lowest element of $\vee_s$ (resp.~unique highest element of $\wedge_s$)
is called its \emph{center}.
Recall that $A_t$ is the $t$-anti-chain.

Let $P$ and $Q$ be posets on disjoint ground sets.
We write $P \sqcup Q$ for their disjoint union, that is, the poset whose
ground set is the union of the ground sets of $P$ and $Q$ and in which
no element of $P$ is comparable with any element of $Q$.
\begin{lemma}\label{lem2-2}
$L_N(M_2)=1+\frac{1}{N}$ for $N\geq 5$.
\end{lemma}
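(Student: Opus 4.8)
The plan is to prove the two inequalities separately. For the lower bound $L_N(M_2)\ge 1+\tfrac1N$, I would exhibit the family $\mathcal F=\binom{[N]}{1}\cup\{[N]\setminus\{1\}\}$, consisting of all $N$ singletons together with one set of size $N-1$. Its comparability graph is a star: the set $[N]\setminus\{1\}$ lies above each of $\{2\},\dots,\{N\}$, while $\{1\}$ is incomparable to everything, so there are no two disjoint comparable pairs and hence no weak copy of $M_2$. Its Lubell value is $N\cdot\tfrac1N+\binom{N}{N-1}^{-1}=1+\tfrac1N$, giving the lower bound. For the upper bound I first reformulate $M_2$-freeness: by definition a family contains a weak copy of $M_2$ exactly when it contains two \emph{disjoint} comparable pairs, that is, a matching of size two in its comparability graph. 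I then split on whether $\mathcal F$ contains a $3$-chain.

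In the easy case where $\mathcal F$ contains no $3$-chain, any two comparable pairs of $\mathcal F$ must intersect (otherwise they form $M_2$), and three pairwise comparable elements would constitute a $3$-chain; hence all comparable pairs share a common element $C$, so $\mathcal F\setminus\{C\}$ is an antichain. By the linearity \eqref{eq-lubell-union}, the YBLM inequality (Theorem \ref{YBLM-ineq_thm}), and the fact that $1\le|C|\le N-1$ forces $\binom{N}{|C|}\ge N$, I get $\operatorname{lu}_N(\mathcal F)=\binom{N}{|C|}^{-1}+\operatorname{lu}_N(\mathcal F\setminus\{C\})\le \tfrac1N+1$.

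The hard case is when $\mathcal F$ contains a $3$-chain $X\subsetneq Y\subsetneq Z$, and this is where I expect the main difficulty. A short argument first shows every other element $W$ is incomparable to each of $X,Y,Z$ (comparability of $W$ with one of them, paired against the complementary comparable pair inside $\{X,Y,Z\}$, would produce an $M_2$) and that these $W$'s are pairwise incomparable; thus $\mathcal F=\{X,Y,Z\}\sqcup\mathcal A$ with $\mathcal A$ an antichain incomparable to all three. The obstacle is that the naive estimate $\operatorname{lu}_N(\mathcal A)\le 1-\binom{N}{|X|}^{-1}$, obtained by applying YBLM to the antichain $\mathcal A\cup\{X\}$, only yields the bound $1+\tfrac2N$. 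To recover the missing $\tfrac1N$ I would pass to the random maximal chain model, using $\operatorname{lu}_N(\mathcal G)=\mathbb{E}\,[\,|\mathcal G\cap\mathcal C|\,]$ for a uniformly random maximal chain $\mathcal C$. The key observation is that the event ``$\mathcal C$ meets $\mathcal A$'' is disjoint from each of ``$X\in\mathcal C$'', ``$Y\in\mathcal C$'', ``$Z\in\mathcal C$'', since a chain passing through a set cannot meet any set incomparable to it. Writing $S_1$, $S_2$, $S_3$ for the usual inclusion--exclusion sums of the three hitting probabilities $\binom{N}{|X|}^{-1},\binom{N}{|Y|}^{-1},\binom{N}{|Z|}^{-1}$ and $q=S_1-S_2+S_3$ for the probability of hitting at least one of $X,Y,Z$, disjointness gives $\operatorname{lu}_N(\mathcal A)\le 1-q$, so that $\operatorname{lu}_N(\mathcal F)=S_1+\operatorname{lu}_N(\mathcal A)\le 1+(S_1-q)=1+S_2-S_3\le 1+S_2$.

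Finally I would bound $S_2$. Each pairwise hitting probability is the reciprocal of a multinomial coefficient $\binom{N}{a,b,c}$ whose three positive parts are determined by the level sizes $|X|,|Y|,|Z|$ (with the constraints $1\le|X|<|Y|<|Z|\le N-1$); a routine minimization shows $\binom{N}{a,b,c}\ge N(N-1)$, the minimum being attained at parts $(1,1,N-2)$. Hence $S_2\le\tfrac{3}{N(N-1)}\le\tfrac1N$ whenever $N\ge 4$, so $\operatorname{lu}_N(\mathcal F)\le 1+\tfrac1N$ in the hard case as well. Combining the two cases with the construction yields $L_N(M_2)=1+\tfrac1N$; the argument in fact works already for $N\ge4$, which covers the stated range $N\ge5$. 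The only genuinely delicate point is the random-chain/inclusion--exclusion step, which is precisely what converts the crude slack $\tfrac2N$ into the exact value $\tfrac1N$; the multinomial estimate is the one routine computation left to verify.
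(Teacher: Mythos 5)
Your proof is correct, and while it follows the paper's overall skeleton, it resolves the crucial case by a genuinely different argument. The lower-bound family (all singletons plus one co-singleton) differs from the paper's (all of $\binom{[N]}{2}$ plus one singleton), but both are stars in the comparability order and give Lubell value $1+\frac1N$. The structural classification of $M_2$-free families --- a star ($\vee_s$ or $\wedge_s$) or a $3$-chain, together with an antichain incomparable to it --- is exactly the paper's, and the star case is handled the same way (delete the center, apply YBLM, pay at most $\frac1N$ for the center). The divergence is in the $3$-chain case $X_1\subsetneq X_2\subsetneq X_3$. The paper first disposes of the sub-cases $|X_1|\ge 2$ or $|X_3|\le N-2$ by the crude bound $\operatorname{lu}_N(\{X_1,X_2\})\le 2\binom{N}{2}^{-1}\le\frac1N$, and in the remaining extremal sub-case ($X_1$ a singleton, $X_3$ a co-singleton) it replaces $\{X_1,X_2\}$ by the antichain $\mathcal{Z}$ of all $|X_2|$-sets between $X_1$ and $X_3$, verifies $\operatorname{lu}_N(\mathcal{Z})\ge\operatorname{lu}_N(\{X_1,X_2\})$ and that $\mathcal{Z}\cup\mathcal{A}$ is an antichain, and applies YBLM once more; the hypothesis $N\ge5$ enters in both of these estimates. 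Your random-maximal-chain inclusion--exclusion replaces all of this: since a maximal chain meeting $\mathcal{A}$ cannot meet $\{X_1,X_2,X_3\}$, you get $\operatorname{lu}_N(\mathcal{F})\le 1+S_2-S_3\le 1+\frac{3}{N(N-1)}\le 1+\frac1N$ in one stroke, with no sub-case split and already for $N\ge4$. Both arguments are sound; yours is cleaner and marginally stronger in the range of $N$, while the paper's stays within the purely combinatorial replacement technique standard in the Lubell-function literature (and note the paper itself records the random-chain interpretation of $\operatorname{lu}_N$ that your argument exploits).
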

\begin{proof}
In $B_N\setminus \{\emptyset, [N]\}$,
we choose all elements in ${[N]\choose 2}$ and
an element in ${[N]\choose 1}$ to form $\mathcal{F}_0$.
Clearly, $L_N(M_2)\geq \operatorname{lu}_N(\mathcal{F}_0)=1+\frac{1}{N}$.

To show that $L_N(M_2)\leq 1+\frac{1}{N}$,
we prove $\operatorname{lu}_N(\mathcal{F}) \leq 1+\frac{1}{N}$
for any $M_2$-free family $\mathcal{F} \subseteq B_N \setminus \{\emptyset, [N]\}$.
Note that 
$\mathcal{F}$ forms either $\vee_s \sqcup A_t$,
$\wedge_s \sqcup A_t$,
or $C_3 \sqcup A_t$.
Suppose first that $\mathcal{F}$ forms $\vee_s \sqcup A_t$.
Suppose first $\mathcal{F}$ forms $\vee_s\cup A_t$.
Then let $\mathcal{X} = \mathcal{F} \setminus \{Y\}$
and $\mathcal{Y}=\{Y\}$,
where $Y \in B_N$ is the set corresponding to the center of $\vee_s$.
Note that $\operatorname{lu}_N(\mathcal{X}) \leq 1$ by Theorem \ref{YBLM-ineq_thm},
and $\operatorname{lu}_N(\mathcal{Y}) \leq \frac{1}{N}$ by the definition.
Thus, we have $\operatorname{lu}_N(\mathcal{F})
=\operatorname{lu}_N(\mathcal{X})+\operatorname{lu}_N(\mathcal{Y})\leq
1+\frac{1}{N}$ by (\ref{eq-lubell-union}).
The same is true for the case where $\mathcal{F}$ forms $\wedge_s\cup A_t$.

Thus, we may assume $\mathcal{F}$ forms $C_3\cup A_t$.
Let $X_1, X_2, X_3 \in B_N$ be the sets forming $C_3$ with $X_1 \subseteq X_2 \subseteq X_3$.
Let $\mathcal{A} = \mathcal{F} \setminus \{X_1, X_2, X_3\}$,
which forms the $t$-anti-chain $A_t$.
If $X_1\not\in {[N]\choose 1}$,
then $\operatorname{lu}_N(\mathcal{A} \cup \{X_3\})\leq 1$
 by Theorem \ref{YBLM-ineq_thm}
and $\operatorname{lu}_N(\{X_1, X_2\}) < \frac{2}{{N\choose 2}} \leq \frac{1}{N}$
since $N \geq 5$,
which implies $\operatorname{lu}_N(\mathcal{F}) =
\operatorname{lu}_N(\mathcal{A} \cup \{X_3\}) + \operatorname{lu}_N(\{X_1,X_2\})
\leq 1 +\frac{1}{N}$
by (\ref{eq-lubell-union}).
Thus, $X_1 \in {[N]\choose 1}$.
Similarly, we have $X_3\in {[N]\choose n-1}$.

Let $|X_2| = j$, where $j\in \{2,3,\dots ,N-2\}$,
and
let $\mathcal{Z}$ be the set of elements $Z \in {[n]\choose j}$
such that $X_1 \subseteq Z \subseteq X_3$.
Note that $X_2 \in \mathcal{Z}$ and $|\mathcal{Z}|={N-2\choose j-1}$.
Then
$$\operatorname{lu}_N(\mathcal{Z})={N-2\choose j-1}{N\choose j}^{-1}
= \frac{j(N-j)}{N(N-1)}\geq \frac{2(N-2)}{N(N-1)}\geq \frac{1}{N}+{N\choose
2}^{-1}\geq \frac{1}{N}+{N\choose j}^{-1}=\operatorname{lu}_N(\{X_1,X_2\}),$$
where the second inequality is obtained from $N\geq 5$.
Since any member $Z \in \mathcal{Z}$ satisfies $X_1 \subseteq Z \subseteq X_3$,
it is easy to see that $Z$ is comparable to no member of $\mathcal{A}$.
Thus, it follows from Theorem \ref{YBLM-ineq_thm} that
$\operatorname{lu}_N(\mathcal{A} \cup \mathcal{Z}) \leq 1$.
These imply
\begin{eqnarray*}
\operatorname{lu}_N(\mathcal{F})&=&
\operatorname{lu}_N(\{X_1,X_2\})+\operatorname{lu}_N(\mathcal{A})+\operatorname{lu}_N(\{X_3\})
\\[0.1cm]
&\leq &\operatorname{lu}_N(\mathcal{Z})+\operatorname{lu}_N(A_t)+\operatorname{lu}_N(\{X_3\})
=\operatorname{lu}_N(\mathcal{Z} \cup A_t)+\operatorname{lu}_N(\{X_3\})\leq 1+\frac{1}{N}.
\end{eqnarray*}
This completes the proof of Lemma \ref{lem2-2}.
\end{proof}

\begin{lemma}\label{lem3-6}
Let $s$ be an integer with $3\leq s\leq {N\choose 2}+1$,
and
let $\mathcal{Q}=\{\emptyset,[N]\}\cup {[N]\choose 1}\cup {[N]\choose N-1}$.
Then $1+\frac{2(s-1)}{N(N-1)}\leq L_N(M_s; \mathcal{Q})\leq 1+\frac{4(s-1)}{N(N-1)}$.
\end{lemma}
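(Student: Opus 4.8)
The plan is to recast the $M_s$-freeness of a family $\mathcal{F}\subseteq B_N\setminus\mathcal{Q}$ as a bound on the matching number of its comparability graph, and then to estimate $\operatorname{lu}_N(\mathcal{F})$ by splitting $\mathcal{F}$ into a large antichain part and a small remainder. Given $\mathcal{F}$, let $G$ be the graph on vertex set $\mathcal{F}$ in which two sets are joined exactly when they are comparable. Since a weak embedding of $M_s$ requires only $f(x_i)\subseteq f(y_i)$ for the $s$ pairs and imposes no condition across distinct pairs, a copy of $M_s$ in $\mathcal{F}$ is precisely a set of $s$ pairwise vertex-disjoint comparable pairs, i.e. a matching of size $s$ in $G$. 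Hence $\mathcal{F}$ is $M_s$-free if and only if $\nu(G)\le s-1$. Moreover, because $\mathcal{Q}$ removes $\emptyset,[N]$ and all singletons and co-singletons, every $F\in\mathcal{F}$ has $2\le |F|\le N-2$, so $\binom{N}{|F|}^{-1}\le\binom{N}{2}^{-1}=\tfrac{2}{N(N-1)}$; this uniform weight bound drives both estimates.

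For the lower bound I would exhibit one family attaining $1+\tfrac{2(s-1)}{N(N-1)}$. Take $\mathcal{F}=\binom{[N]}{2}\cup\mathcal{S}$, where $\mathcal{S}$ is any set of $s-1$ distinct $(N-2)$-element subsets; this is possible exactly because $s-1\le\binom{N}{N-2}=\binom{N}{2}$, which is the hypothesis $s\le\binom{N}{2}+1$, and all members avoid $\mathcal{Q}$ once $N\ge5$ (so that the two levels $2$ and $N-2$ are distinct and admissible). Both levels are antichains, so every edge of $G$ runs between $\binom{[N]}{2}$ and $\mathcal{S}$; thus $G$ is bipartite and $\nu(G)\le|\mathcal{S}|=s-1$, giving $M_s$-freeness. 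By the linearity \eqref{eq-lubell-union}, $\operatorname{lu}_N(\mathcal{F})=\operatorname{lu}_N\!\big(\tbinom{[N]}{2}\big)+\operatorname{lu}_N(\mathcal{S})=1+(s-1)\binom{N}{2}^{-1}=1+\tfrac{2(s-1)}{N(N-1)}$.

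For the upper bound I would pass from the matching bound to a vertex-cover bound. Fix a maximum matching $M$ of $G$; since $M$ is maximal, the set $C$ of its $2\nu(G)\le2(s-1)$ endpoints is a vertex cover, so $\mathcal{F}\setminus C$ is an antichain. The YBLM inequality (Theorem \ref{YBLM-ineq_thm}) then gives $\operatorname{lu}_N(\mathcal{F}\setminus C)\le1$, while the uniform weight bound gives $\operatorname{lu}_N(C)\le|C|\cdot\tfrac{2}{N(N-1)}\le\tfrac{4(s-1)}{N(N-1)}$. Adding these through \eqref{eq-lubell-union} yields $\operatorname{lu}_N(\mathcal{F})\le1+\tfrac{4(s-1)}{N(N-1)}$, as required.

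The argument is essentially routine once the comparability-graph reformulation is in place; the single point deserving care is the passage between matchings and covers. The crude inequality $\tau(G)\le2\nu(G)$—rather than a K\H{o}nig-type equality—is exactly what produces the gap between the constants $2$ and $4$ in the two bounds, so this is the step I would flag as the main (and only real) obstacle to tightening the estimate. Sharpening it, for instance by exploiting that comparability graphs are perfect or by weighting $C$ more carefully, is the natural route to closing the gap but is unnecessary for the stated result. I would also verify the small-$N$ boundary so that levels $2$ and $N-2$ remain distinct and lie in $\{2,\dots,N-2\}$.
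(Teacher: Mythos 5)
Your proposal is correct and follows essentially the same route as the paper: a full-level-plus-$(s-1)$-extra-sets construction for the lower bound, and for the upper bound the decomposition of an $M_s$-free family into an antichain plus at most $2(s-1)$ exceptional elements, estimated via Theorem \ref{YBLM-ineq_thm} and the uniform bound $\binom{N}{|F|}^{-1}\le\binom{N}{2}^{-1}$. Your comparability-graph/maximum-matching argument merely makes explicit the existence of the set $\mathcal{X}$ with $|\mathcal{X}|\le 2(s-1)$ that the paper asserts without proof, and your choice of levels $2$ and $N-2$ instead of $3$ and $2$ in the extremal family is an inessential variation.
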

\begin{proof}
In $B_N\setminus \mathcal{Q}$,
we choose all sets in ${[N]\choose 3}$ and
$s-1$ sets in ${[N]\choose 2}$ to form $\mathcal{F}_0$.
Clearly, $L_N(M_s; \mathcal{Q})\geq \operatorname{lu}_N(\mathcal{F}_0)=1+\frac{2(s-1)}{N(N-1)}$.

To show that $L_N(M_s; \mathcal{Q})\leq 1+\frac{4(s-1)}{N(N-1)}$,
we prove $\operatorname{lu}_N(\mathcal{F}) \leq 1+\frac{4(s-1)}{N(N-1)}$
for any $M_s$-free family $\mathcal{F}\subseteq B_N\setminus \mathcal{Q}$.
Since $\mathcal{F}$ is $M_s$-free, 
there is $\mathcal{X} \subseteq \mathcal{F}$
such that $|\mathcal{X}| \leq 2(s-1)$ and $\mathcal{F}\setminus \mathcal{X}$ forms an anti-chain.
Therefore,
 $\operatorname{lu}_N(\mathcal{F} \setminus \mathcal{X})\leq 1$ by Theorem \ref{YBLM-ineq_thm},
and $\operatorname{lu}_N(\mathcal{X}) \leq \frac{4(s-1)}{N(N-1)}$
since each set in $\mathcal{X}$ has size at least 2 and at most $N-2$.
These imply $\operatorname{lu}_N(\mathcal{F})\leq 1+ \frac{4(s-1)}{N(N-1)}$.
\end{proof}


Now we are ready to prove Theorem \ref{th-Ms}.
\begin{proof}[Proof of Theorem \ref{th-Ms}]
We define a $k$-coloring $\chi$ of $B_{k+1}$ as follows:
For $S \in B_{k+1}$,
let $\chi(S) = i$ if $S\in {[k+1]\choose i}$ for $i \in [k]$,
let $\chi(S) = 1$ if $S = \emptyset \in {[k+1]\choose 0}$,
and let $\chi(S) = k$ if $S = [k+1] \in {[k+1]\choose k+1}$.
It is easy to check that no monochromatic copy of $M_s$ with $s \geq 2$ exists.
Thus, $\operatorname{R}_k(\mathcal{B}\,|\,M_s) \geq k+2$.

For the upper bound, we first prove the case of $M_2$.
For $k=2$,
it can be obtained by Theorem \ref{th-Walzer} (i).
Thus, we assume $k \geq 3$.
To show $\operatorname{R}_k(\mathcal{B}\,|\,M_2)\leq k+2$,
suppose contrary that there is no monochromatic copy of $M_2$ in
some $k$-coloring of $B_{k+2}$.
Since $k+2 \geq 5$,
it follows from Lemma \ref{lem2-2} that
$L_{k+2}(M_2) = 1 + \frac{1}{k+2}$.
Setting $\mathcal{Q} = \{\emptyset, [k+2]\}$,
where $\operatorname{lu}_{k+2}(\mathcal{Q}) = 2$,
we have $$
kL_{k+2}(M_2) = k+\frac{k}{k+2}<k+1 = (k+2) + 1 - \operatorname{lu}_N(\mathcal{Q}).
$$
By Theorem \ref{th-condition}, we have $\operatorname{R}_k(\mathcal{B}\,|\,M_2)\leq k+2$,
and hence $\operatorname{R}_k(\mathcal{B}\,|\,M_2)=k+2$.

Next, we give an upper bound for the case of $M_s$ with $s \geq 3$.
Let $N= \max\{k+7,s\}$.
To show $\operatorname{R}_k(\mathcal{B}\,|\,M_s)\leq N$,
suppose contrary that there is no monochromatic copy of $M_s$ in
some $k$-coloring of $B_{N}$.
Let $\mathcal{Q}=\{\emptyset,[N]\}\cup {[N]\choose 1}\cup {[N]\choose n-1}$.
Note that $\operatorname{lu}_N(\mathcal{Q})=4$.
By Lemma \ref{lem3-6},
$L_{N}(M_s;\mathcal{Q}) \leq 1 + \frac{4(s-1)}{N(N-1)}$.
By a simple calculation from $N = \max\{k+7,s\}$,
we obtain $N(N-1)(N-k-3) > 4k(s-1)$,
which implies that
$$
kL_{n}(M_s;\mathcal{Q}) \leq k+\frac{4k(s-1)}{N(N-1)}
<N-3 = N+1 - \operatorname{lu}_N(\mathcal{Q}).
$$
By Theorem \ref{th-condition}, we have $\operatorname{R}_k(\mathcal{B}\,|\,M_s)\leq N = \max\{k+7,s\}$.
\end{proof}


\section{Lower bounds for strong Boolean Ramsey numbers}
\label{lowerB_sec}

Conlon and Ferber \cite{CF21} and Wigderson \cite{Wi21} improved the longstanding lower bound for multicolor Ramsey
numbers of graphs. By replacing an explicit graph by a random graph, Sawin \cite{Sa22} made a further improvement.
In this section,
by adapting their methods,
we give a lower bound of the strong Boolean Ramsey numbers.
For positive integers $n$ and $t$ with $n \geq t$,
we denote by $h_n(t)$ the number of distinct $t$-chains in $B_{n}$.

\begin{theorem}\label{th-lower-Boolean}
Let $k,t, m_1, m_2,\ldots,m_k$ be integers with $k\geq 3$,
and 
$1 \leq t-1 \leq m_1\leq m_2 \leq \cdots \leq m_k$. Then
\begin{equation*}
\begin{split}
&\operatorname{R}_{k,t}^{\sharp}(\mathcal{B}\,|\,B_{m_1},B_{m_2},\ldots,B_{m_k})
>
\min\left\{
m_1 + \frac{h_{m_1}(t) +(h_{m_1}(t)-1)\log_2(k-1) -1}{4{m_{1}\choose\left\lfloor m_{1}/2\right\rfloor}}, \ 
m_k + \frac{h_{m_k}(t) -1}{2{m_{k}\choose\left\lfloor m_{k}/2\right\rfloor}} \right\}.
\end{split}
\end{equation*}
\end{theorem}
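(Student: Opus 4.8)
The plan is to prove the lower bound by exhibiting, for every dimension $N$ below the stated minimum, a $k$-colouring of the $t$-chains of $B_N$ that contains no induced copy of $B_{m_i}$ all of whose $t$-chains receive colour $i$. I read the $\min$ as the signature of a \emph{single} colouring that must simultaneously survive two independent constraints, rather than two separate constructions (two constructions would yield a $\max$). Colour $k$ will be used as an unstructured ``filler'', so it only has to avoid an induced $B_{m_k}$; since the largest target is the easiest to avoid, it is natural to assign it to this weakest part of the construction, which produces the term $m_k+\frac{h_{m_k}(t)-1}{2\binom{m_k}{\lfloor m_k/2\rfloor}}$. The remaining $k-1$ colours are deployed through a structured product construction (the adaptation of Conlon--Ferber/Wigderson/Sawin) and must avoid induced $B_{m_i}$ for $i\le k-1$; because $B_{m_1}$ is the smallest and hence hardest to avoid, that family of constraints is binding at $m_1$, giving the first term. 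Each group caps the admissible $N$, and the colouring is valid only below the smaller cap.

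For the filler colour I would follow Sawin's refinement: colour each $t$-chain with colour $k$ with a tuned probability $p$ and otherwise hand it to the structured layer. The decisive structural fact is that an induced copy of $B_m$ has a bottom element $U$ and a top element $V$ with $|V\setminus U|\ge m$, since the longest chain of $B_m$ has $m+1$ elements and embeds as a chain of $B_N$ from $U$ to $V$. Organising induced copies by their extreme pair $U\subsetneq V$ and bounding, via Sperner's theorem (Theorem~\ref{YBLM-ineq_thm}), the contribution of copies supported near a fixed interval by a quantity governed by the width $w_m:=\binom{m}{\lfloor m/2\rfloor}$, I would show that the number $N_m$ of induced copies of $B_m$ in $B_N$ has $\log_2 N_m$ of order $w_m(N-m)$. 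A first-moment estimate (or, for the exact constants, Corollary~\ref{Lemma-LLL}) then makes a mono-$k$ copy avoidable once $N_{m_k}\,p^{\,h_{m_k}(t)}<1$; the exponent $h_{m_k}(t)$ enters because every induced copy carries exactly $h_{m_k}(t)$ many $t$-chains, all of which must be colour $k$. Optimising $p$ (essentially $p=\tfrac12$) yields the second term.

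For the $k-1$ structured colours I would exploit the poset identity $B_a\times B_b\cong B_{a+b}$ to run a product construction across the colours. After factoring, the first term reads $m_1+\frac{(h_{m_1}(t)-1)\,(1+\log_2(k-1))}{4\binom{m_1}{\lfloor m_1/2\rfloor}}$: the summand $1$ corresponds to a base colouring (a Sawin-type random choice, the analogue of the Conlon--Ferber algebraic colouring) handling a single structured colour on $B_{N_0}$ with $N_0\approx m_1+\frac{h_{m_1}(t)-1}{4w_{m_1}}$ and avoiding an induced $B_{m_1}$, while each of the $\log_2(k-1)$ successive doublings of the colour set adds a further $\frac{h_{m_1}(t)-1}{4w_{m_1}}$ to the attainable dimension (additivity of $N$ matching multiplicativity of $|B_N|$ under the product). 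The per-colour rate here is $\frac{1}{4w_{m_1}}$ rather than the filler's $\frac{1}{2w_{m_1}}$, the lost factor $2$ being the overhead of the product/algebraic step, and the binding target is again the smallest, $m_1$; at each level the monochromatic copies are controlled by the same copy-counting and first-moment/local-lemma estimates.

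The hard part will be the product step for general $t$. Unlike graph edges, a $t$-chain of $B_a\times B_b$ does not project to a $t$-chain in a single factor --- its two coordinate projections are only weakly increasing --- so the clean ``colour by the first nontrivial coordinate'' rule used for edges must be replaced by a rule that records in which factor the chain already contains a genuine $t$-chain, with correspondingly adjusted parameters, and one must check that this still forbids monochromatic induced $B_{m_i}$. A second difficulty is that not every induced copy of $B_m$ is an interval: non-canonical copies exist (for instance $\emptyset,\{1\},\{2\},\{1,2,3\}$ is an induced $B_2$ in $B_3$, and such spread-out copies in fact dominate the count $N_m$), so the enumeration must account for all strong embeddings, not merely full intervals $[U,V]$. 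Reconciling these two points with the sharp constants --- the factors $2$ and $4$ and the additive $-1$'s --- is where the technical effort concentrates, and it is precisely what forces the slightly different normalisations in the two terms of the minimum.
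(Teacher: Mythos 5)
Your treatment of colour $k$ is essentially the paper's: one takes a set $\mathcal{T}_k$ of $t$-chains obtained by keeping each $t$-chain independently with probability $\tfrac12$ and conditioning on the absence of an induced $B_{m_k}$ (this is Lemma~\ref{lem0}), and the second term of the minimum is exactly the requirement $e(m_k,N)\cdot 2^{-h_{m_k}(t)}\le\tfrac12$ combined with the bound $e(m_k,N)<2^{2\binom{m_k}{\lfloor m_k/2\rfloor}(N-m_k)}$ of Theorem~\ref{number_b_n} (the paper counts strong embeddings via the Axenovich--Walzer/Dedekind-number estimate rather than by your interval-plus-Sperner organisation, but that is a cosmetic difference).

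The genuine gap is in your handling of colours $1,\dots,k-1$. You propose a product construction on $B_a\times B_b\cong B_{a+b}$ with $\log_2(k-1)$ successive doublings of the colour set, and you yourself flag that you cannot carry out the product step for $t$-chains (a $t$-chain of the product need not project to a $t$-chain of either factor) nor control the non-interval induced copies; as written the argument stops exactly there. The missing idea is that no product or algebraic step is needed at all. In the paper, every $t$-chain outside $\mathcal{T}_k$ is simply coloured uniformly at random with one of the $k-1$ remaining colours, independently. A fixed strong embedding of $B_{m_i}$ into $\overline{\mathcal{T}_k}$ is monochromatic in colour $i$ with probability $(k-1)^{-h_{m_i}(t)}$, and the union bound over the $k-1$ colours together with the count $e(m_1,\overline{\mathcal{T}_k})=c_t(m_1,m_k,N)\,e(m_1,N)$ and the conditional estimate $c_t(m_1,m_k,N)\le e(m_1,N)2^{-h_{m_1}(t)}/\bigl(1-e(m_k,N)2^{-h_{m_k}(t)}\bigr)$ of Lemma~\ref{lem0} reduces everything to $e(m_1,N)^2\cdot 2^{-h_{m_1}(t)+1}\cdot(k-1)^{-h_{m_1}(t)+1}<1$, i.e.\ to $4\binom{m_1}{\lfloor m_1/2\rfloor}(N-m_1)\le(h_{m_1}(t)-1)(1+\log_2(k-1))$. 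This is where the factor $4$ (from the square $e(m_1,N)^2$) and the $\log_2(k-1)$ (from the union bound against $(k-1)^{-h}$, not from iterated products) actually come from; your ``base plus doublings'' reading of the first term, and the attendant product machinery, are both a misdiagnosis and the point at which the proposed proof would fail.
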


Note that we were able to determine the exact number for $h_n(t)$ as follows.

\begin{proposition}\label{th-La2}
Let $n,t$ be positive integers with $n \geq t$.
Then
$$
h_n(t)=\sum_{i=0}^{t-1}(-1)^{t-i+1}{t-1\choose i}(i+2)^n
= \Theta ((t+1)^n).
$$
\end{proposition}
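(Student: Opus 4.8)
The plan is to set up a bijection between $t$-chains in $B_n$ and certain labelings of the ground set $[n]$, and then to count those labelings by inclusion--exclusion. First I would observe that a $t$-chain in $B_n$, being a set of $t$ pairwise comparable subsets of $[n]$, is the same thing as a strictly increasing sequence $S_1 \subsetneq S_2 \subsetneq \cdots \subsetneq S_t$ of subsets of $[n]$ (we do not require $S_1 = \emptyset$ or $S_t = [n]$). To such a chain I associate the labeling $\ell : [n] \to \{0,1,\dots,t\}$ defined by letting $\ell(x)$ be the least index $j$ with $x \in S_j$, and $\ell(x) = 0$ when $x \notin S_t$. Recovering the chain from $\ell$ via $S_j = \{x : 1 \le \ell(x) \le j\}$ shows this is a bijection onto the set of admissible labelings, so the task reduces to identifying and counting those.

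The key step is to translate strictness of the chain into a condition on $\ell$. Since $S_{j-1} \subsetneq S_j$ holds exactly when some element first appears at level $j$, the chain is strict precisely when each of the values $2,3,\dots,t$ is attained by $\ell$; the values $0$ and $1$ are optional, as they correspond to the non-required containments $S_t \subsetneq [n]$ and $\emptyset \subsetneq S_1$. Hence $h_n(t)$ equals the number of functions $[n] \to \{0,1,\dots,t\}$ whose image contains the $(t-1)$-element set $\{2,\dots,t\}$.

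Next I would count these functions by inclusion--exclusion over the $t-1$ mandatory values: forbidding a fixed $i$-subset of $\{2,\dots,t\}$ leaves $(t+1-i)^n$ functions, so
$$ h_n(t) = \sum_{i=0}^{t-1} (-1)^i \binom{t-1}{i}(t+1-i)^n. $$
Reindexing by $i \mapsto t-1-i$ converts this into $\sum_{i=0}^{t-1}(-1)^{t-1-i}\binom{t-1}{i}(i+2)^n$, which is the stated expression once one notes $(-1)^{t-1-i} = (-1)^{t-i+1}$.

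Finally, for the asymptotics I would isolate the top term $i = t-1$, which contributes $(i+2)^n = (t+1)^n$ with coefficient $+1$, and bound the rest: each remaining summand has base $i+2 \le t$ and there are only finitely many (for fixed $t$), so their total is $O(t^n) = o\big((t+1)^n\big)$. Thus $h_n(t) = (t+1)^n\big(1+o(1)\big) = \Theta\big((t+1)^n\big)$ as $n \to \infty$ with $t$ fixed. The one place demanding care is the second step---correctly determining which label values are forced and which are free---since an off-by-one there would distort the count; the inclusion--exclusion bookkeeping and the asymptotic estimate are then routine.
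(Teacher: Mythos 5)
Your proposal is correct and follows essentially the same argument as the paper: the labeling $\ell$ you define is precisely the paper's identification of each element with the unique difference set $X_j\setminus X_{j-1}$ it lies in, the strictness condition (values $2,\dots,t$ must be attained) matches the paper's requirement that $X_j\setminus X_{j-1}\neq\emptyset$ for $2\le j\le t$, and the inclusion--exclusion and reindexing are identical. Your justification of the $\Theta((t+1)^n)$ asymptotics is slightly more explicit than the paper's, but the content is the same.
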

\begin{proof}
Consider a $t$-tuple $(X_1,X_2,\ldots,X_t)$
such that $X_1\subseteq X_2\subseteq \cdots \subseteq X_t \subseteq [n]$.
For simplicity,
we let $X_0 = \emptyset$ and $X_{t+1} = [n]$.
Since each element in $[n]$ belongs to
exactly one of $X_j \setminus X_{j-1}$ for $1 \leq j \leq t+1$,
the number of such $t$-tuples is exactly $(t+1)^n$.
On the other hand,
such a $t$-tuple $(X_1,X_2,\ldots,X_t)$ forms a $t$-chain
if and only if
$X_j \setminus X_{j-1} \neq \emptyset$ for any $2 \leq j \leq t$.
Note that
\begin{itemize}
\item
For each $2 \leq j \leq t$,
exactly $t^n$ many $t$-tuples $(X_1,X_2,\ldots,X_t)$
satisfy that
$X_j \setminus X_{j-1} = \emptyset$,
since for such a $t$-tuple,
each element in $[n]$ belongs to
exactly one of $X_{j'} \setminus X_{j'-1}$ for $1 \leq j' \leq t+1$ with $j' \neq j$.
Note that there are ${t-1\choose 1}$ choices for such $j$.
\item
For each pair $2 \leq j_1 < j_2 \leq t$,
exactly $(t-1)^n$ many $t$-tuples $(X_1,X_2,\ldots,X_t)$
satisfy that
$X_{j_1} \setminus X_{{j_1}-1} = X_{j_2} \setminus X_{{j_2}-1} =\emptyset$
and there are ${t-1\choose 2}$ choices for such $j_1, j_2$.
\item
More generally,
for each $i$ integers $2 \leq j_1 < j_2 < \dots < j_i \leq t$,
exactly $(t-i+1)^n$ many $t$-tuples $(X_1,X_2,\ldots,X_t)$
satisfy that
$X_{j_1} \setminus X_{{j_1}-1} =
\cdots = X_{j_i} \setminus X_{{j_i}-1} =\emptyset$
and there are ${t-1\choose i}$ choices for such $j_1, \dots , j_i$.
\end{itemize}
This implies that
$$
h_n(t)
=(t+1)^n + \sum_{i=1}^{t-1} (-1)^{i}{t-1\choose i}(t-i+1)^n
=\sum_{i=0}^{t-1} (-1)^{i}{t-1\choose i}(t-i+1)^n.
$$
Then letting $i' = t-i-1$ in the summation,
together with
${t-1\choose i} = {t-1\choose t-1-i} = {t-1\choose i'}$,
we obtain the desired equality.
\end{proof}

By Proposition \ref{th-La2} and
${m\choose\left\lfloor m/2\right\rfloor} \sim \frac{2^{m+\frac{1}{2}}}{\sqrt{\pi m}}$,
Theorem \ref{th-lower-Boolean} gives 
$$
\operatorname{R}_{k,t}^{\sharp}(\mathcal{B}\,|\,B_{m_1},B_{m_2},\ldots,B_{m_k})
=
\Omega\left((\log_2 k) t^{m_1}+  t^{m_k}\right)
$$

To prove Theorem \ref{th-lower-Boolean}, we need several preliminary results.
For two integers $m$ and $N$ with $m \leq N$, let $a(m)$ be the number of distinct antichains in $B_m$, and $e(m, N)$ be the number of strong embeddings of $B_m$ into $B_N$.
Axenovich and Walzer \cite{AW17} gave the asymptotic behavior of $e(m,N)$
as
$$
\frac{N !}{(N-m) !}(a(m)-m)^{N-m} \leq e(m, N) \leq \frac{N !}{(N-m) !} a(m)^{N-m}.
$$
Since it is known that 
$a(m)=2^{(1+o(1)){n\choose \left\lfloor m/2\right\rfloor}}$
(see \cite{KM}) and $\frac{N !}{(N-m) !}\leq N^m=2^{m\log_2 N}$,
we obtain the following useful bound.
\begin{theorem}{\upshape\cite{AW17}}\label{number_b_n}
Let $m,N$ be two integers with $m \leq N$,
Then
$$
e(m, N) \leq 2^{(1+o(1)){m\choose \left\lfloor m/2\right\rfloor}(N-m) + m\log_2 N}
<
2^{2{m\choose \left\lfloor m/2\right\rfloor}(N-m)}.
$$
\end{theorem}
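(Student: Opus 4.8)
The plan is to chain together the three ingredients stated just above the theorem. First I would invoke the Axenovich--Walzer upper bound $e(m,N)\le \frac{N!}{(N-m)!}\,a(m)^{N-m}$ and estimate its two factors separately. For the antichain count I substitute $a(m)=2^{(1+o(1)){m\choose \lfloor m/2\rfloor}}$, so that $a(m)^{N-m}=2^{(1+o(1)){m\choose \lfloor m/2\rfloor}(N-m)}$; for the falling factorial I use $\frac{N!}{(N-m)!}\le N^m=2^{m\log_2 N}$. Multiplying the two estimates, i.e.\ adding the exponents, immediately yields the first displayed inequality
\[
e(m,N)\le 2^{(1+o(1)){m\choose \lfloor m/2\rfloor}(N-m)+m\log_2 N}.
\]
Note this step needs no restriction on $N$ beyond $m\le N$; at the boundary $N=m$ it just reads $e(m,m)=m!\le m^m$.

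For the second, strict inequality, since $x\mapsto 2^{x}$ is increasing it suffices to compare exponents, that is, to show
\[
(1+o(1)){m\choose \lfloor m/2\rfloor}(N-m)+m\log_2 N < 2{m\choose \lfloor m/2\rfloor}(N-m).
\]
Cancelling $(1+o(1)){m\choose \lfloor m/2\rfloor}(N-m)$ from both sides, this rearranges to $m\log_2 N<(1-o(1)){m\choose \lfloor m/2\rfloor}(N-m)$. Here I would assume $N>m$, so that $N-m\ge 1$ (for $N=m$ the right-hand bound collapses to $2^{0}=1$ and the strict inequality is not intended), and take $m$ large enough to make sense of the $o(1)$ terms, consistent with the asymptotic nature of the statement.

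The crux is to verify this last inequality \emph{uniformly} over all $N>m$, rather than for a single $N$. The point is that the left-hand side grows only logarithmically in $N$ and polynomially in $m$, while the central binomial coefficient on the right is exponentially large, ${m\choose \lfloor m/2\rfloor}=\Theta(2^{m}/\sqrt m)$. Concretely I would fix $m$ and treat the slack $g(N)=(1-o(1)){m\choose \lfloor m/2\rfloor}(N-m)-m\log_2 N$ as a function of the real variable $N\ge m+1$. Its derivative $(1-o(1)){m\choose \lfloor m/2\rfloor}-\frac{m}{N\ln 2}$ is positive for every $N\ge 1$ once $m$ is large (the first term is exponential in $m$, the second at most $\frac{m}{\ln 2}$), so $g$ is increasing; hence it is enough to check positivity at the left endpoint $N=m+1$, where it reduces to $m\log_2(m+1)<(1-o(1)){m\choose \lfloor m/2\rfloor}$. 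This holds for all sufficiently large $m$ because the right-hand side is exponential in $m$ whereas the left-hand side is $O(m\log m)$. Combining the two inequalities gives the claim.

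The main obstacle I anticipate is precisely this uniformity in $N$. The naive worry is the regime where $N$ is astronomically larger than $m$, since there $m\log_2 N$ is largest in absolute terms; but there the linear term ${m\choose \lfloor m/2\rfloor}(N-m)$ dominates even more strongly, so the genuinely binding case is the \emph{smallest} gap $N=m+1$. The monotonicity argument makes this rigorous by collapsing the whole one-parameter family of inequalities to that single worst case, after which the exponential growth of ${m\choose\lfloor m/2\rfloor}$ finishes the estimate.
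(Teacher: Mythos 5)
Your argument is exactly the paper's: it chains the Axenovich--Walzer bound $e(m,N)\le \frac{N!}{(N-m)!}a(m)^{N-m}$ with the Kleitman--Markowsky estimate $a(m)=2^{(1+o(1))\binom{m}{\lfloor m/2\rfloor}}$ and $\frac{N!}{(N-m)!}\le N^m$, which is precisely how the theorem is derived in the text preceding its statement. Your additional care with the second (strict) inequality --- reducing the uniformity in $N$ to the worst case $N=m+1$ and flagging the degenerate case $N=m$ --- only fills in details the paper leaves implicit, so the proposal is correct and takes essentially the same route.
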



\medskip
In the rest of this section, we use the following notation.
For a positive integer $N$, let $\Tt$ be a set of $t$-chains in $B_N$.
We denote by $\Tc$ the complement of $\Tt$,
that is,
$\Tc$ is the set of all $t$-chains in $B_N$ that do not belong to $\Tt$.
\if0
We say that $\Tt$ \emph{contain} $B_m$
if there exists a strong embedding $f: B_m \rightarrow B_N$
such that $f(A)\in \Tt$ for any $t$-chain $A$ in $B_m$.
\fi
For a strong embedding $f: B_m \rightarrow B_N$ of $B_m$ into $B_N$,
we say that \emph{$f(B_m)$ is embedded into $\Tt$}
if
$f(A)\in \Tt$ for any $t$-chain $A$ in $B_m$.
In addition, we say that $\Tt$ \emph{contains $B_m$}
if $\Tt$ contains $f(B_m)$ for some strong embedding $f$ of $B_m$ into $B_N$.
We denote by $e(m, \Tt)$ the number of strong embeddings of $B_m$ contained in $\Tt$.
Trivially $e(m,\Tt) \leq e(m,N)$
and $e(m,\mathcal{T}_0) = e(m,N)$, where $\mathcal{T}_0$ is the set of all $t$-chains in $B_N$.
Let
$$
c_t(m,\Tt)
= \dfrac{e(m,\Tt) }{e(m,N)}.
$$
Note that
$c_t(m,\Tt)$ means
for a strong embedding $f$ of $B_m$ into $B_N$ chosen uniformly at random,
the probability that $f(B_m)$ is embedded into $\Tt$.
Let
$$c_t(m,n,N)=\min \{c_t(m, \Tc)\,|\,\text{ $\Tt$ is a set of $t$-chains in $B_N$
that does not contain $B_{n}$}\}.
$$
We give an upper bound for $c_t(m,n,N)$.

\begin{lemma}\label{lem0}
For any $t \geq 3$ and $m, n\geq 2$, we have $c_t(m,n,N) \leq  \dfrac{e(m,N)\cdot 2^{-h_m(t) }}{1 - e(n,N) \cdot  2^{-h_n(t)}}$.
\end{lemma}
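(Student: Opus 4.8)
The plan is to produce a single good family $\Tt$ by the probabilistic deletion method, following the Conlon--Ferber and Sawin strategy adapted to Boolean lattices. Concretely, I would two-color the set of all $t$-chains of $B_N$ at random, assigning each $t$-chain independently to $\Tt$ with probability $\tfrac12$ and to $\Tc$ otherwise. The key observation is that a strong embedding $g\colon B_n\to B_N$ carries the $h_n(t)$ pairwise distinct $t$-chains of $B_n$ to $h_n(t)$ pairwise distinct $t$-chains of $B_N$; strongness is exactly what guarantees that these images really are $t$-chains and that no two coincide. Hence the event that $g(B_n)$ is embedded into $\Tt$ has probability exactly $2^{-h_n(t)}$. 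Letting $X$ be the number of strong embeddings of $B_n$ into $B_N$ that are embedded into $\Tt$, linearity of expectation gives $\mathbb{E}[X]=e(n,N)\,2^{-h_n(t)}$; letting $Y=e(m,\Tc)$ be the number of strong embeddings of $B_m$ embedded into $\Tc$, the same count gives $\mathbb{E}[Y]=e(m,N)\,2^{-h_m(t)}$, where $h_n(t),h_m(t)$ are the values computed in Proposition~\ref{th-La2}.

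Next I would extract an outcome in which $\Tt$ contains no copy of $B_n$ while $\Tc$ carries few copies of $B_m$. Since $X$ is a nonnegative integer, Markov's inequality yields $\Pr[X=0]\ge 1-\mathbb{E}[X]=1-e(n,N)\,2^{-h_n(t)}$; I may assume this is positive, since otherwise the asserted right-hand side is non-positive and there is nothing to prove. Conditioning on $\{X=0\}$ and using $Y\ge 0$,
\[
\mathbb{E}[\,Y\mid X=0\,]=\frac{\mathbb{E}[\,Y\,\mathbf 1_{X=0}\,]}{\Pr[X=0]}\le\frac{\mathbb{E}[Y]}{\Pr[X=0]}\le\frac{e(m,N)\,2^{-h_m(t)}}{1-e(n,N)\,2^{-h_n(t)}}.
\]

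Because a conditional expectation is an average over the conditioning event, there is at least one outcome with $X=0$ for which $Y$ does not exceed this bound; I fix the corresponding family $\Tt$. For this $\Tt$ we have $X=0$, so $\Tt$ does not contain $B_n$ and is therefore admissible in the minimum defining $c_t(m,n,N)$, while $e(m,\Tc)=Y\le \dfrac{e(m,N)\,2^{-h_m(t)}}{1-e(n,N)\,2^{-h_n(t)}}$. Since $e(m,N)\ge 1$ (as $m\le N$) we have $c_t(m,\Tc)=e(m,\Tc)/e(m,N)\le e(m,\Tc)$, and the claimed inequality follows.

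The step I expect to be delicate is the deletion itself: the events driving $X$ (chains forced into $\Tt$) and those driving $Y$ (chains forced into $\Tc$) come from the same random coloring and are not independent, so a bare first-moment estimate on $Y$ cannot simultaneously certify the absence of $B_n$; it is the conditional-expectation trick that couples the two requirements. The only other point needing care is bookkeeping: one must be sure that $g$ and $f$ being strong embeddings makes the relevant $t$-chains distinct, so that the success probabilities are genuinely $2^{-h_n(t)}$ and $2^{-h_m(t)}$, which is where the exact formula for $h_n(t)$ from Proposition~\ref{th-La2} and the hypotheses guaranteeing that $B_n$ and $B_m$ actually contain $t$-chains enter.
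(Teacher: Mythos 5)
Your argument is correct and is essentially the paper's own proof: a uniformly random $\Tt$ (each $t$-chain kept independently with probability $\tfrac12$), first-moment bounds $e(n,N)\cdot 2^{-h_n(t)}$ and $e(m,N)\cdot 2^{-h_m(t)}$ on the expected numbers of embeddings of $B_n$ into $\Tt$ and of $B_m$ into $\Tc$, and the conditional-expectation deletion step to extract one admissible $\Tt$; you are in fact somewhat more explicit than the paper, which compresses the conditioning into a single displayed ratio. The only caveat --- shared with the paper --- is that the final division is legitimate only when $1-e(n,N)\cdot 2^{-h_n(t)}>0$ (when it is not, the stated inequality is not ``nothing to prove'' but simply fails), which is guaranteed in the regime where the lemma is applied.
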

\begin{proof}
Let $\mathcal{T}$ be a set of $t$-chains in $B_N$
such that each $t$-chain is independently chosen with probability $\frac{1}{2}$.
Considering the expectation of the probability that $\overline{\mathcal{T}}$ contains $B_m$ under the condition that $\mathcal{T}$ does not contain $B_n$,
we have
\begin{equation*}\label{equ0}
\begin{split}
c_t(m,n,N)
&\leq \frac{\Pr[\text{$\Tc$ contains $B_m$}]}{\Pr[\text{$\mathcal{T}$ does not contain $B_{n}$}]}.
\end{split}
\end{equation*}
Since there are $e(m,\Tc)$ strong embeddings of $B_m$ contained in $\Tc$,
and $B_m$ has $h_m(t)$ many $t$-chains,
$
\Pr[\text{$\overline{\mathcal{T}}$ contains $B_m$}]
\leq e(m,N) \cdot 2^{-h_m(t)}.$
For the denominator,
we similarly have
$$\Pr[\mbox{$\mathcal{T}$ does not contain $B_{n}$}]
\geq 1 - e(n,N) \cdot  2^{-h_n(t)}.$$
Therefore,
$c_t(m,n,N) \leq \frac{e(m,N) \cdot 2^{-h_m(t) }}{1 - e(n,N) \cdot  2^{-h_n(t)}}$.
\end{proof}

We are now ready to prove Theorem \ref{th-lower-Boolean}.
\begin{proof}[Proof of Theorem \ref{th-lower-Boolean}]
Let
$$
N =\min\left\{
m_1 + \frac{h_{m_1}(t) +(h_{m_1}(t)-1)\log_2(k-1) -1}{4{m_{1}\choose\left\lfloor m_{1}/2\right\rfloor}}, \ 
m_k + \frac{h_{m_k}(t) -1}{2{m_{k}\choose\left\lfloor m_{k}/2\right\rfloor}} \right\}.
$$
For the lower bound, we construct a $k$-coloring of $B_N$ in the following way.
\begin{itemize}
\item By the definition of $c_t(m_1,m_k,N)$,
there is a set $\mathcal{T}_k$ such that
$\mathcal{T}_k$ contains no $B_{m_k}$ and
$\dfrac{e(m_1,\overline{\mathcal{T}_k}) }{e(m_1,N)} = 
c_t(m_1,\overline{\mathcal{T}_k}) = c_t(m_1,m_k,N)$.
Color all the $t$-chains in $\mathcal{T}_k$ by color $k$.
\item 
We color 
each $t$-chain in $\overline{\mathcal{T}_{k}}$
randomly with $1, 2,3,\ldots,k-1$, each with probability $1/(k-1)$, independently.
\end{itemize}
Note that there is no monochromatic induced copy of $B_{m_k}$ with color $k$. So we need to prove that the probability that there is a monochromatic induced copy of $B_{m_{i}}$ with some color $i \ (1\leq i\leq k-1)$
is less than $1$.

Let $i$ with $1\leq i \leq k-1$.
Recall that there are $e(m_i, \overline{\mathcal{T}_k})$ strong embeddings of $B_{m_i}$ into $\overline{\mathcal{T}_k}$.
Since each strong embedding of $B_{m_i}$ is monochromatic with color $i$ with probability $(k-1)^{-h_{m_{i}}(t)}$,
the probability that there exists a monochromatic $B_{m_i}$ is
at most
$e(m_i, \overline{\mathcal{T}_k}) \cdot (k-1)^{-h_{m_{i}}(t)}$.
Since $m_1 \leq m_2 \leq \dots \leq m_{k-1}$,
we have
$$e(m_i, \overline{\mathcal{T}_k}) \cdot (k-1)^{-h_{m_{i}}(t)}
\leq
e(m_1, \overline{\mathcal{T}_k}) \cdot (k-1)^{-h_{m_{1}}(t)}
=
c_{t}(m_{1},m_{k},N) \cdot e(m_1,N) \cdot  (k-1)^{-h_{m_{1}}(t)}.
$$

Therefore, it follows from Lemma \ref{lem0} that
\begin{equation*}
\begin{split}
&\Pr[\mbox{There exists a monochromatic induced copy of $B_{m_i}$ for some $1\leq i\leq k$}]
\\
&\leq \sum_{i=1}^{k-1} e(m_{i},\overline{\mathcal{T}_k}) \cdot (k-1)^{-h_{m_{i}}(t)}
\\[0.2cm]
&\leq \sum_{i=1}^{k-1}c_{t}(m_{1},m_{k},N) \cdot e(m_1, N) \cdot (k-1)^{-h_{m_{1}}(t)}\\[0.2cm]
&= c_{t}(m_{1},m_{k},N) \cdot e(m_1, N) \cdot (k-1)^{-h_{m_{1}}(t) +1}\\[0.2cm]
%
&\leq \frac{ 2^{-h_{m_1}(t) }}{1 - e(m_k,N) \cdot 2^{-h_{m_k}(t)}}
\cdot \{e(m_1, N)\}^2 \cdot (k-1)^{-h_{m_{1}}(t)+1}\\[2mm]
&< \frac{2^{-h_{m_1}(t) }}{1 - 2^{2{m_{k}\choose\left\lfloor m_{k}/2\right\rfloor}(N-m_k)} \cdot  2^{-h_{m_k}(t)}}\left(2^{4{m_{1}\choose\left\lfloor m_{1}/2\right\rfloor}(N-m_1)}\right)(k-1)^{-h_{m_{1}}(t)+1},
\end{split}
\end{equation*}
where the last inequality is obtained by Theorem \ref{number_b_n}.
By the choice for $N$, it is straightforward to calculate that
$\Pr[\mbox{There exists a monochromatic $B_{m_i}$ for some $1\leq i\leq k$}] < 1$.
Therefore, we have
$\operatorname{R}_{k,t}^{\sharp}(\mathcal{B}\,|\,B_{m_1},B_{m_2},\ldots,B_{m_k}) > N$.
\end{proof}

\if0

\begin{remark}
Bohman and Peng \cite{BP2021} gave an explicit construction showing that
 $\operatorname{R}_{2,1}^{\sharp}(\mathcal{B}\,|\,B_{m})\geq 2m+1$. Cox and Stolee \cite{CS18} showed that $\operatorname{R}_{2,2}(\mathcal{B}\,|\,B_{2})=6$.
\end{remark}
\fi

\section{Upper bounds for strong Boolean Ramsey numbers}

In this section,
we focus on $k$-coloring of sets in a Boolean lattice (i.e.~$t=1$)
and 
give upper bounds for the strong Boolean Ramsey numbers
for Boolean lattices $B_m$ and $r$-diamonds $\lozenge_r$
in Sections \ref{boo_subsec} and \ref{dia_subsec},
respectively.

\subsection{Boolean lattices}
\label{boo_subsec}

The following is the main theorem in this subsection.

\begin{theorem}\label{th-general-Boolean}
For $k\geq 6$ and $m\geq 2$, we have
$$
\operatorname{R}_k^{\sharp}(\mathcal{B}\,|\,B_{m})\leq (\operatorname{R}_{\lfloor k/2\rfloor}^{\sharp}(\mathcal{B}\,|\,B_{m})-2)\operatorname{R}_{\lceil k/2\rceil}^{\sharp}(\mathcal{B}\,|\,B_{m})+\operatorname{R}_{\lfloor k/2\rfloor}^{\sharp}(\mathcal{B}\,|\,B_{m}).
$$
\end{theorem}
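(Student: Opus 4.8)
The plan is to reduce the $k$-colour problem to a two-colour problem by merging colour classes, and then to recurse. Write $a=\operatorname{R}_{\lfloor k/2\rfloor}^{\sharp}(\mathcal{B}\,|\,B_{m})$ and $b=\operatorname{R}_{\lceil k/2\rceil}^{\sharp}(\mathcal{B}\,|\,B_{m})$, and set $N=(a-2)b+a$, noting the equivalent form $N=(a-2)(b+1)+2$. Given any $k$-colouring of $B_N$, I would fix a partition of the colour set into two groups $C_{1}$ and $C_{2}$ with $|C_{1}|=\lceil k/2\rceil$ and $|C_{2}|=\lfloor k/2\rfloor$, and define a two-colouring of $B_N$ by super-colours $X$ and $Y$: an element receives colour $X$ if its original colour lies in $C_{1}$, and colour $Y$ otherwise. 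The goal then becomes to locate, in this two-coloured $B_N$, either an induced copy of $B_{b}$ all of whose elements have super-colour $X$, or an induced copy of $B_{a}$ all of whose elements have super-colour $Y$.

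Granting such a copy, the argument closes by recursion. Suppose we find an $X$-coloured induced copy $f(B_{b})$. As a poset it is isomorphic to $B_{b}$, and the original colouring restricted to it uses only the $\lceil k/2\rceil$ colours of $C_{1}$; hence by the definition of $b=\operatorname{R}_{\lceil k/2\rceil}^{\sharp}(\mathcal{B}\,|\,B_{m})$ it contains a monochromatic induced copy $g(B_{m})$. Since the composition of two strong embeddings is again a strong embedding, $f\circ g$ is a monochromatic induced copy of $B_{m}$ inside $B_N$. The case of a $Y$-coloured induced $B_{a}$ is identical, using $a=\operatorname{R}_{\lfloor k/2\rfloor}^{\sharp}(\mathcal{B}\,|\,B_{m})$. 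Thus everything rests on the two-colour statement $\operatorname{R}^{\sharp}(\mathcal{B}\,|\,B_{b},B_{a})\le (a-2)b+a$, which is the heart of the proof; the hypotheses $k\ge 6$ and $m\ge 2$ guarantee that both halves fall in the range where this estimate is meaningful (in particular $a,b\ge \operatorname{R}_{3}^{\sharp}(\mathcal{B}\,|\,B_{m})$).

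To prove this asymmetric two-colour bound I would use a greedy interval-embedding in the spirit of Axenovich--Walzer. Working with $N=(a-2)(b+1)+2$, the idea is to build a $Y$-coloured induced $B_{a}$ coordinate by coordinate while maintaining a ``clean'' interval $[A,B]\cong B_{d}$ in which the partial copy can still be completed. At each stage one inspects a maximal chain or sub-block of height $b+1$ inside the current interval: either it supplies a new $Y$-element that extends the red copy by one coordinate while shrinking the available dimension by at most $b+1$, or, failing that, a pigeonhole argument forces an all-$X$ sub-block of dimension $b$, which is exactly the desired $X$-coloured induced $B_{b}$. Running this for the $a-2$ productive stages, together with a small additive overhead (here $2$) to initialise and terminate the construction, consumes at most $(a-2)(b+1)+2=N$ elements of the ground set, which is precisely the budget provided. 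Here the stage count $a-2$ comes from the dimension of the object being built in colour $Y$, and the per-stage cost $b+1$ from the fallback height in colour $X$, so the roles match the formula.

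The main obstacle is precisely this two-colour step. One must verify that each stage can be carried out inside the shrinking interval without creating spurious comparabilities among the coordinates already chosen, so that the final object is genuinely an \emph{induced} $B_{a}$ (respectively $B_{b}$); and one must track the dimension bookkeeping tightly enough to reach the constant $(a-2)(b+1)+2$ rather than the weaker $\approx ab$ that would follow from invoking a black-box two-colour Ramsey number. Once this embedding lemma is in place, the merging reduction and the recursion of the first two steps follow directly.
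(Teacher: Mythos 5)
Your outer reduction (merge the $k$ colours into two super-colours, find a super-monochromatic induced $B_a$ or $B_b$, then recurse inside it) is exactly the paper's strategy, and that part of your argument is fine. The gap is in the two-colour step, and it is a real one. Because you fix the partition $C_1,C_2$ \emph{in advance}, you are forced to prove the unconditional bound $\operatorname{R}^{\sharp}(\mathcal{B}\,|\,B_{b},B_{a})\le (a-2)b+a$ for arbitrary $2$-colourings. That bound is \emph{stronger} than the best published estimates: Lu--Thompson give roughly $\bigl(a-1+\tfrac{2}{a+1}\bigr)b+\tfrac{a}{3}+2$ for the asymmetric case, i.e.\ about $b$ more than what you need, and the Axenovich--Walzer greedy interval argument you invoke gives only about $ab+a+b$. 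Your sketch of the ``greedy interval-embedding'' does not close this gap --- you yourself flag the bookkeeping to reach $(a-2)(b+1)+2$ as the main obstacle, and nothing in the proposal supplies it.

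The paper sidesteps the need for any such unconditional bound by choosing the colour partition \emph{after} seeing the colouring. Since $N>k(n_0-1)$ (where $n_0=\operatorname{R}_{\lfloor k/2\rfloor}^{\sharp}(\mathcal{B}\,|\,B_m)$), the pigeonhole principle yields $n_0$ singletons of a common colour $i$; one then takes the ``blue'' class $C$ to be any $\lfloor k/2\rfloor$ colours containing $i$ together with the colours of $\emptyset$ and of $[N]$ --- this is where $k\ge 6$ (so $\lfloor k/2\rfloor\ge 3$) is used. With $\emptyset$, the chosen singletons, and $[N]$ all guaranteed blue, the Lu--Thompson embedding lemma applies with parameters $a=2$, $b=1$, $n'=n_0$, and its hypothesis $N\ge (n_0+1-a-b)m_0+n'=(n_0-2)m_0+n_0$ is exactly the stated bound. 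In other words, the saving of one full multiple of $m_0$ comes from the pre-coloured boundary elements, not from a sharper general two-colour Ramsey estimate. To repair your proof you would either have to adopt this adaptive choice of $C_1,C_2$ plus the Lu--Thompson lemma, or genuinely prove the improved unconditional two-colour bound, which would be a new result in its own right.
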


\if0
Walzer \cite[page 68]{Walzer15} obtained the following recurrence relation:
\begin{equation*}
\begin{split}
{\rm R}_k^{\sharp}(\mathcal{B}\,|\,B_{m})
&\leq (m-1){\rm R}_{k-1}^{\sharp}(\mathcal{B}\,|\,B_{m})+m+k-1.
\end{split}
\end{equation*}
for $k\geq 3$.
Considering the
bound $\operatorname{R}^{\sharp}_2(\mathcal{B}\,|\,B_{m})\leq m^2-(1-\epsilon)m\log m$
by \cite[Corollary 6.2]{Winterphd}
for every $\epsilon>0$ and sufficiently large $m$,
Theorem \ref{th-general-Boolean} gives a better upper bound
for $\operatorname{R}^{\sharp}_k(\mathcal{B}\,|\,B_{m})$
for $k \geq 3$ and sufficiently large $m$.
\fi

We prove Theorem \ref{th-general-Boolean}
using the following lemma by Lu and Thompson \cite[Lemma 1]{LT22}.

\begin{lemma}[Lu and Thompson \cite{LT22}]
\label{le1}
Let $N, m_0, n_0, n', a, b$ be non-negative integers
satisfying
$n'\geq n_0 \geq a+b$ and
$$
N \geq (n_0+1-a-b)m_0 + n'.
$$ 
Then,
for any $2$-coloring with red and blue of the Boolean lattice $B_N$,
if there is a strong embedding $I: B_{n_0} \rightarrow B_N$
such that
$|I([n_0])| = n'$
and the following properties $(i)$ and $(ii)$ are satisfied:
\begin{itemize}
\item[] $(i)$
For all sets $S\in B_{n_0}$ with $|S| \leq a-1$, $I(S)$ is colored with blue;
\item[] $(ii)$
For all sets $S\in B_{n_0}$ with $|S| \geq n_0-b+1$,
$I(S) \cup\left([N] \setminus [n']\right)$ is colored with blue.
\end{itemize}
Then there exists 
a blue induced copy of $B_{n_0}$ or 
a red induced copy of $B_{m_0}$
in $B_N$.
\end{lemma}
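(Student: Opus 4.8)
The plan is to prove the lemma by first normalizing the embedding and then separating a clean ``all blue'' case from the genuinely combinatorial case, which I would handle by induction on the number of middle levels. After relabeling the ground set \([N]\) I may assume \(I([n_0])=[n']\), so that every image satisfies \(I(S)\subseteq[n']\); write \(F=[N]\setminus[n']\) for the set of \emph{free} elements, and note that the hypothesis gives \(|F|=N-n'\ge(n_0+1-a-b)m_0\), which matches the set \([N]\setminus[n']\) appearing in condition \((ii)\). Set \(L=n_0+1-a-b\), the number of \emph{middle levels} \(a\le|S|\le n_0-b\) of \(B_{n_0}\); conditions \((i)\) and \((ii)\) say precisely that the bottom \(a\) levels are blue under \(I\) and the top \(b\) levels are blue under \(S\mapsto I(S)\cup F\).

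The first step is the clean case in which \(I(S)\) is blue for \emph{every} middle set \(S\). Here I would define a mixed embedding \(I'\colon B_{n_0}\to B_N\) by
\[
I'(S)=I(S)\ \text{if}\ |S|\le n_0-b,\qquad I'(S)=I(S)\cup F\ \text{if}\ |S|\ge n_0-b+1 .
\]
Since \(I(S)\cap F=\emptyset\) and \(F\neq\emptyset\) (the cases \(F=\emptyset\) reduce to the base cases below), a short check shows that \(I'\) is injective, order-preserving and order-reflecting across the boundary, hence a strong embedding of \(B_{n_0}\); moreover every image is blue, using \((i)\) on the bottom levels, the case hypothesis on the middle levels, and \((ii)\) on the top levels. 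Thus \(I'(B_{n_0})\) is a blue induced copy of \(B_{n_0}\), and we are done.

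The remaining situation is that some middle image is red, and here I would induct on \(L\). The base case \(L=0\) forces \(a+b\ge n_0+1\), so \((i)\) and \((ii)\) already cover all of \(B_{n_0}\) and the mixed embedding above is entirely blue. For the inductive step I would examine the two extreme middle levels. If every \(I(S)\) with \(|S|=a\) is blue, I can raise \(a\) to \(a+1\): condition \((i)\) then holds with the larger value, \(L\) drops by one, and the weaker bound \(N\ge(L-1)m_0+n'\) needed to recurse is implied by the hypothesis. Symmetrically, if every \(I(S)\cup F\) with \(|S|=n_0-b\) is blue, I can raise \(b\) to \(b+1\) and recurse. In either situation the induction closes immediately.

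The crux, and the step I expect to be the main obstacle, is when \emph{neither} extreme middle level is monochromatically blue: there is a red witness \(R=I(S^{\ast})\) at level \(a\) and a red witness \(I(S^{\ast\ast})\cup F\) at level \(n_0-b\). The plan here is to use the budget of free elements — the bound \(|F|\ge Lm_0\) provides exactly \(m_0\) fresh elements for each of the \(L\) rounds — to grow a red induced \(B_{m_0}\) upward from the seed \(R\) inside the interval \([R,\,R\cup F]\cong B_{|F|}\). The difficulty is that the coloring on this interval is arbitrary, so a red \(B_{m_0}\) need not simply appear; the heart of the argument is to show that \emph{failure} to extend the red structure always exposes a fresh monochromatically blue middle level, or lets one enlarge the blue boundary of a suitably modified embedding, thereby feeding back into one of the two reductions above and decreasing \(L\). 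Carrying out this red-extension bookkeeping while maintaining invariants \((i)\) and \((ii)\) across the recursion — and verifying that the two outcomes ``a red \(B_{m_0}\) is completed'' and ``a blue reduction is available'' are genuinely exhaustive at each round — is the technical core on which the lemma rests, and it is precisely where the factor \((n_0+1-a-b)m_0\) in the bound on \(N\) is consumed.
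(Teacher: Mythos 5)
Your normalization, the all-blue mixed embedding $I'$, and the two boundary reductions (raising $a$ or $b$ when an extreme middle level is uniformly blue) are all sound, and you correctly locate where the budget $(n_0+1-a-b)m_0$ must be spent. But the proposal stops exactly at the only hard case: when both extreme middle levels carry red witnesses, you offer a plan (``grow a red $B_{m_0}$ from the seed $R$, and show that failure to extend always exposes a fresh monochromatically blue middle level'') without carrying it out, and you say yourself that this is the technical core. That is a genuine gap, and the intended dichotomy is moreover doubtful as stated: nothing forces the blue witnesses inside the interval $[R, R\cup F]$ to align into an entire level that is blue under a \emph{uniform} augmentation, so the feedback into your two reductions need not occur. (Note also that the present paper does not prove this lemma at all --- it quotes it from Lu and Thompson --- so the comparison here is with their argument.)

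The missing idea, which makes the lemma go through in a single pass with no induction on the number of middle levels, is a \emph{per-set} augmentation drawn from \emph{per-level} blocks. Fix pairwise disjoint blocks $F_\ell \subseteq [N]\setminus[n']$ with $|F_\ell|=m_0$, one for each middle level $a\le \ell\le n_0-b$ (possible since $N-n'\ge (n_0+1-a-b)m_0$). Process the middle sets along any linear extension and define $J(S)=I(S)\cup\bigcup_{S'\subseteq S,\ a\le|S'|}X_{S'}$, choosing $X_S\subseteq F_{|S|}$ when $S$ is processed. The forced part $A=I(S)\cup\bigcup_{S'\subsetneq S,\ a\le |S'|}X_{S'}$ is disjoint from $F_{|S|}$, and $\{A\cup X : X\subseteq F_{|S|}\}$ is an induced copy of $B_{m_0}$ (the interval $[A,\,A\cup F_{|S|}]$); so either all $2^{m_0}$ of these sets are red --- a red induced $B_{m_0}$, and we are done on the spot --- or some choice of $X_S$ makes $J(S)$ blue. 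If every middle set receives a blue image, then the map sending bottom sets to $I(S)$, middle sets to $J(S)$, and top sets to $I(S)\cup([N]\setminus[n'])$ is a blue strong copy of $B_{n_0}$: comparabilities are preserved because the augmentations accumulate by union along $\subseteq$, and incomparabilities are inherited from $I$ because every $I$-image lies in $[n']$ while every augmentation lies in $[N]\setminus[n']$, so containment between images forces containment of the $I$-parts. Crucially, different sets at the same level may receive \emph{different} blue augmentations from the same block $F_\ell$ --- this is exactly what your level-by-level induction cannot accommodate, since it implicitly requires whole levels to turn blue simultaneously, and it is how the factor $(n_0+1-a-b)m_0$ is actually consumed.
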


\if0

\begin{lemma}[Lu and Thompson \cite{LT22}]
\label{le1}
Let $N, n, n' a, b$ be non-negative integers
satisffying
$n'\geq n \geq a+b$ and
$$
N \geq n'+(n+1-a-b)m.
$$ 
Then,
for an $k$-coloring of the Boolean lattice $B_N$, if there is a strong embedding $I: B_{n} \rightarrow B_N$
such that
$|I([n])| = n'$
and the following properties $(i)$ and $(ii)$ are satisfied:
\begin{itemize}
\item[] $(i)$
For all sets $S\in B_n$ with $|S| \leq a-1$, $I(S)$ is colored with a color in $\{1, 2, \ldots, \lfloor k/2\rfloor\}$;

\item[] $(ii)$
For all sets $S\in B_n$ with $|S| \geq n-b+1$,
$I(S) \cup\left([N] \setminus [n']\right)$ is colored with a color in $\{\lfloor k/2\rfloor, \ldots, k\}$.
\end{itemize}
Then there exists a monochromatic induced copy of $B_{n_j}$ in $B_N$
of color $j$ for some $1\leq j\leq k$.
\end{lemma}

\fi


\begin{proof}[Proof of Theorem \ref{th-general-Boolean}]
Let $n_0=\operatorname{R}_{\lfloor k/2\rfloor}^{\sharp}(\mathcal{B}\,|\,B_{m})$, and $m_0=\operatorname{R}_{\lceil k/2\rceil}^{\sharp}(\mathcal{B}\,|\,B_{m})$.
Note that $m_0\geq n_0 \geq k+1$.
Let  $N=(n_0-2)m_0+ n_0$. For any $k$-coloring of $B_N$, it suffices to prove that there is a monochromatic induced copy of $B_{m}$.

\if0
We first claim the following;
\begin{quote}
There exist color set $C\subseteq [1,k]$ with $|C|=\lfloor k/2\rfloor$
and a set $X \in B_N$ with $|X|=n_0$
such that $\emptyset$ and $\{x\}$ for any $x\in X$ are all colored with colors in $C$.
\end{quote}
\fi

Note that ${N\choose 1} = 
N=(n_0-2)m_0+ n_0 \geq (n_0-2)(k+1)+ n_0>k(n_0-1)+1$.
Thus, 
there is a set $X \subseteq [N]$ with $|X| = n_0$ 
such that $\{x\}$ for any $x \in X$ is colored with color $i$ for some $1 \leq i \leq k$.
Let $C\subseteq [1,k]$ be a set of colors with $|C|=\lfloor k/2\rfloor \geq 3$
such that $C$ contains $i$, the color of $\emptyset$ and that of $[N]$.
%
\if
For some $q$ with $1 \leq q \leq k$,
there exist elements $A, A', X \in B_N$ with $A \subseteq A'$, $|A'| < k$,
$A'\cap X=\emptyset$, $|X|=m_q$
such that $A$ and $A' \cup \{x\}$ for any $x\in X$ are all colored with $q$.
\fi
Let $I : B_{n_0} \to B_N$ be an embedding
satisfying the following conditions;
\begin{itemize}
\item
$I(\emptyset)=\emptyset$.
\item
For $S \in B_{n_0}$ with $|S| = 1$,
we have $I(S) = \{x\}$ for some $x \in X$.
\item
For $S, S \in B_{n_0}$ with $|S| = |S'| = 1$ and $S \neq S'$,
we have $I(S) \neq I(S')$.
\item
For $S \in B_{n_0}$ with $|S| \geq 2$,
we have $I(S) = \bigcup_{s \in S} I(\{s\})$.
\end{itemize}

Let $n'= |I([n_0])| = |X| = n_0$.
We now regard the colors in $C$ as blue and 
all other colors as red. 
We now check that we can use Lemma \ref{le1} with $a=2$ and $b=1$.
By the choice of $X$,
$I(S)$ is colored with color in $C$ for all sets $S \in B_{n_0}$ with $|S| \leq 1$,
and
$I([n_0]) \cup ([N]\setminus I([n_0]) = [N]$ is colored with color in $C$.
Since
$N\geq (n_0-2) m_0 + n_0$,
it follows from Lemma \ref{le1} that 
there exists 
a blue induced copy of $B_{n_0}$ or 
a red induced copy of $B_{m_0}$ in $B_N$.

Suppose that there exists a blue induced copy of $B_{n_0}$.
Then all of the elements in the induced copy are colored with a color in $C$.
Since $|C|=\lfloor k/2\rfloor$ and $n_0=\operatorname{R}_{\lfloor k/2\rfloor}^{\sharp}(\mathcal{B}\,|\,B_{m})$,
there exists
a monochromatic induced copy of $B_m$ of color in $C$.
In the case where there exists a red induced copy of $B_{m_0}$,
since $m_0=\operatorname{R}_{\lceil k/2\rceil}^{\sharp}(\mathcal{B}\,|\,B_{m})$,
similarly we can show that there exists a monochromatic induced copy of $B_m$ of color not in $C$.
This completes the proof of Theorem \ref{th-general-Boolean}.
\end{proof}

\if0
\begin{remark}
Walzer \cite{Walzer15} obtained the following upper bound:
\begin{equation*}
\begin{split}
{\rm R}_k^{\sharp}(\mathcal{B}\,|\,B_{n}) &\leq (n-1){\rm R}_{k-1}^{\sharp}(\mathcal{B}\,|\,B_{n})+n+k-1\\
&\leq (n-1)^{\lfloor k/2\rfloor} \operatorname{R}_{k-\lfloor k/2\rfloor}^{\sharp}(\mathcal{B}\,|\,B_{n})\\
& + \frac{(n - 1)^{\lfloor k/2 \rfloor + 2} + (k - \lfloor k/2 \rfloor)(n - 1)^{\lfloor k/2 \rfloor + 1} + (\lfloor k/2 \rfloor - k)(n - 1)^{\lfloor k/2 \rfloor} - (n - 1)^2 - k(n - 1) + k}{(n - 2)^2}.
\end{split}
\end{equation*}
for $k\geq 2$.
By \cite[Corollary 6.2]{Winterphd},
we have $\operatorname{R}^{\sharp}_2(\mathcal{B}\,|\,B_{n})\leq n^2-(1-\epsilon)n\log n$ for every  $\epsilon>0$ and for sufficiently large  $n$, depending on  $\epsilon$,
and hence we have the following remark.
\begin{equation*}
\begin{split}
{\rm R}_k^{\sharp}(\mathcal{B}\,|\,B_{n}) &\leq (n-1)^{k-2} (n^2-(1-\epsilon)n\log n)\\
&~~~~+  \frac{(n - 1)^{k} + 2(n - 1)^{k-1} -2(n - 1)^{k-2} - (n - 1)^2 - k(n - 1) + k}{(n - 2)^2}\\
&\leq (n-1)^k.
\end{split}
\end{equation*}

Therefore, $\operatorname{R}_{\lfloor k/2\rfloor}^{\sharp}(\mathcal{B}\,|\,B_{n})-2\leq (n-1)^{\lfloor k/2\rfloor}$,
the bound in Theorem \ref{th-general-Boolean} is better.

\end{remark}
\fi

\subsection{$r$-diamonds}
\label{dia_subsec}

By Theorem \ref{th-Walzer} (ii),
we have $2k
\leq \operatorname{R}_k^{\sharp}(\mathcal{B}\,|\,B_2)=\operatorname{R}_k^{\sharp}(\mathcal{B}\,|\,\lozenge_2)\leq 3^k-1$.
\if0
We can improve the upper bound to $5k-3$.
Furthermore,
we give an upper bound on $\operatorname{R}_k^{\sharp}(\mathcal{B}\,|\,\lozenge_r)$
for $r \ge 2$ as follows.
\fi
In this section,
we prove the following improvement,
where we obtain
$\operatorname{R}_k^{\sharp}(\mathcal{B}\,|\,\lozenge_2) \leq 5k-3$
by substituting $r=2$.

\begin{theorem}\label{BR22-B2}
Let $k,r$ be positive integers with $r \geq 2$. Then
$$
2k \leq \operatorname{R}_k^{\sharp}(\mathcal{B}\,|\,\lozenge_r)\leq 3kr-2r -k +1.
$$
\end{theorem}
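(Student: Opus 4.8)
The statement has two halves, and I would attack them separately. For the lower bound $2k\le \operatorname{R}_k^{\sharp}(\mathcal{B}\,|\,\lozenge_r)$, I would use monotonicity under induced containment: the four elements $\{x,y_1,y_2,z\}$ of $\lozenge_r$ induce a copy of $\lozenge_2=B_2$, so any $k$-coloring of a Boolean lattice with no monochromatic induced $\lozenge_r$ is, a fortiori, free of a monochromatic induced $\lozenge_2$. Hence $\operatorname{R}_k^{\sharp}(\mathcal{B}\,|\,\lozenge_r)\ge \operatorname{R}_k^{\sharp}(\mathcal{B}\,|\,\lozenge_2)\ge 2k$ by Theorem~\ref{th-Walzer}(ii). (If one prefers a self-contained witness, color each $S\in B_{2k-1}$ by $\lfloor |S|/2\rfloor+1$; every color class then lives in two consecutive levels, so no color class contains two sets whose sizes differ by $2$, and therefore none contains an induced $B_2$.)

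For the upper bound the key observation is arithmetic: the target value factors as
$$
3kr-2r-k+1 \;=\; r+(k-1)(3r-1).
$$
This strongly suggests proving the additive color-reduction recurrence
$$
\operatorname{R}_k^{\sharp}(\mathcal{B}\,|\,\lozenge_r)\;\le\;\operatorname{R}_{k-1}^{\sharp}(\mathcal{B}\,|\,\lozenge_r)+(3r-1)\qquad(k\ge 2),
$$
together with the base case $\operatorname{R}_1^{\sharp}(\mathcal{B}\,|\,\lozenge_r)\le r$, which holds because $B_r$ already contains the induced copy $\emptyset\subset\{i\}\subset[r]$ (bottom $\emptyset$, the $r$ singletons as middles, top $[r]$). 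Telescoping the recurrence from this base reproduces the bound exactly. The whole point is that the increment must be \emph{additive}: Walzer's multiplicative recurrence, obtained by branching over which atoms above the bottom set survive, only yields the exponential bound $3^k-1$, whereas an additive step of size $3r-1$ is what forces the linear behaviour $5k-3$ when $r=2$.

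Concretely, to prove the recurrence I would set $N=M+(3r-1)$ with $M=\operatorname{R}_{k-1}^{\sharp}(\mathcal{B}\,|\,\lozenge_r)$ and, given a $k$-coloring of $B_N$, either directly exhibit a monochromatic induced $\lozenge_r$ in color $k$ or restrict to an interval $[X^-,X^+]\cong B_M$ on which color $k$ can be ignored, so that the induction hypothesis for $k-1$ colors finishes the job. I would spend the budget $3r-1$ as three groups of roughly $r$ scaffold coordinates, one for each level of the height-$3$ poset $\lozenge_r$: using these coordinates to generate single-element extensions, I would examine the colors of a candidate bottom $X$, the $r$ atoms $X\cup\{a\}$, and the cap $X\cup A$; if color $k$ recurs simultaneously on the bottom, all $r$ incomparable middles, and the top, we win outright, and otherwise the scaffold should let us contract to a color-$k$-avoiding sub-lattice $B_M$. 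The main obstacle is precisely this single-color elimination at additive rather than multiplicative cost — the delicate point being that one must control the colors of the $r$ pairwise-incomparable middles \emph{at once}, jointly with the bottom and the top, which is exactly the coupling that the naive chain or averaging arguments fail to handle.
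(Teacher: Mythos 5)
Your lower bound is essentially the paper's (the two-consecutive-levels coloring of $B_{2k-1}$ is exactly the witness the paper gives), though note your implication is stated backwards: what you need is that a coloring with no monochromatic induced $\lozenge_2$ has no monochromatic induced $\lozenge_r$, because any induced copy of $\lozenge_r$ contains an induced $\lozenge_2$ on $\{x,y_1,y_2,z\}$. The upper bound, however, has a genuine gap. You correctly observe that $3kr-2r-k+1 = r+(k-1)(3r-1)$ and that the base case $\operatorname{R}_1^{\sharp}(\mathcal{B}\,|\,\lozenge_r)\le r$ holds, but the entire theorem is then resting on the recurrence $\operatorname{R}_k^{\sharp}(\mathcal{B}\,|\,\lozenge_r)\le \operatorname{R}_{k-1}^{\sharp}(\mathcal{B}\,|\,\lozenge_r)+3r-1$, which you never prove: the dichotomy ``either a monochromatic $\lozenge_r$ in color $k$, or an induced copy of $B_M$ on which color $k$ never appears'' is precisely the hard step, you flag it yourself as the ``main obstacle,'' and nothing in the scaffold sketch resolves it. Moreover, eliminating one color by passing to an induced sub-lattice that avoids it \emph{entirely} is exactly the mechanism behind Walzer's recurrence, and it is exactly what costs a multiplicative factor; there is no evident reason an additive budget of $3r-1$ coordinates can buy a color-$k$-free induced $B_M$ inside $B_{M+3r-1}$ (color $k$ could, for instance, occupy two consecutive middle levels, which contain no $3$-chain and hence no $\lozenge_r$, yet are awkward for every would-be copy of $B_M$ to dodge). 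So the central claim of your proposal is unsupported, and it is not even clear it is true.

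The paper's proof avoids induction on $k$ altogether and is a direct double pigeonhole in $B_N$ with $N=3kr-2r-k+1$. Starting from $X_0=\emptyset$, one builds a nested tower $X_0\subset X_1\subset\cdots\subset X_{2k-1}$ with $|X_i|=ir$: given $X_{i-1}$, there are at least $N-(2k-2)r=k(r-1)+1$ sets of size $(i-1)r+1$ containing $X_{i-1}$, so by pigeonhole some $r$ of them, $Y_i^1,\dots,Y_i^r$, share a color $c_i$; one then sets $X_i=\bigcup_{j=1}^r Y_i^j$. Writing $c_0$ for the color of $X_0$ and $c_{2k}$ for the color of $X_{2k-1}$, the $2k+1$ values $c_0,c_1,\dots,c_{2k}$ must repeat three times, say at $i_1<i_2<i_3$; then $Y_{i_1}^1$ (or $X_0$ if $i_1=0$), the $r$ equal-sized, hence pairwise incomparable, sets $Y_{i_2}^1,\dots,Y_{i_2}^r$, and $Y_{i_3}^1$ (or $X_{2k-1}$ if $i_3=2k$) form a monochromatic induced copy of $\lozenge_r$, since the tower construction guarantees all the required containments. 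Note how this sidesteps the coupling problem you identified: the $r$ middles at each height are made monochromatic by pigeonhole \emph{before} one knows which color will recur, and the bottom and top are then supplied by a second pigeonhole along the tower, rather than by eliminating a color and recursing. Replacing your unproven recurrence by this one-pass argument is what is needed to close the gap.
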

\begin{proof}
The lower bound follows from Theorem \ref{th-Walzer} (ii).
In fact, we can construct a $k$-coloring on $B_{2k-1}$:
all elements in ${[2k-1] \choose 2i-2} \cup {[2k-1] \choose 2i-1}$ are colored
with $i$ for $i = 1,2,\dots, k$.
One can see that there is no monochromatic induced copy of $\lozenge_r$.

We now prove the upper bound.
Let $N = 3kr-2r -k +1$,
and consider a $k$-coloring of the Boolean lattice $B_{N}$.
It suffices to show that
this $B_{N}$ contains a monochromatic induced copy of $\lozenge_r$.
We will inductively define the sets $X_i$'s and $Y_i^j$'s
satisfying the following properties:
\begin{itemize}
\item[(1)]
$X_i \in {[N] \choose ir}$ for any $0 \leq i \leq 2k-1$,
\item[(2)]
$Y_i^j \in {[N] \choose (i-1)r+1}$
such that
$|Y_i^j \setminus X_{i-1}| =1$
for any $1 \leq i \leq 2k-1$ and any $1 \leq j \leq r$,
\item[(3)]
for each $1 \leq i \leq 2k-1$,
$Y_i^1, Y_i^2, \dots , Y_i^r$ have the same color,
\item[(4)]
$X_0 \subseteq Y_{i_1}^1 \subseteq Y_{i_2}^j \subseteq Y_{i_3}^1 \subseteq X_{2k-1}$
for any $1 \leq i_1 < i_2 < i_3 \leq 2k-1$
and any $1 \leq j \leq r$.
\end{itemize}
To find such sets, first let $X_0 = \emptyset$.
For $1 \leq i \leq 2k-1$,
suppose that $X_{i-1} \in {[N] \choose (i-1)r}$ is already defined.
Since $i \leq 2k-1$, we have $|X_{i-1}| = (i-1)r \leq (2k-2)r$,
and hence the number of elements in ${[N] \choose (i-1)r+1}$
that include $X_{i-1}$ is at least $N - (2k-2)r
= k(r-1)+1$.
Thus, $r$ of such elements have the same color,
and we define such elements as $Y_i^j$ with $1 \leq j \leq r$.
Note that $|Y_i^j \setminus X_{i-1}| =1$ for $1 \leq j \leq r$.
In addition,
let $X_i = \bigcup_{j=1}^r Y_i^j$,
which is an element of ${[N] \choose ir}$.
It is easy to see that
by performing the above procedure iteratively,
we obtain sets satisfying the conditions (1)--(4).

For $1 \leq i \leq 2k-1$,
it follows from the condition (3) that
all of $Y_i^1, Y_i^2, \dots , Y_i^r$ have the same color, say $c_i$.
Let $c_0$ be the color of $X_0$,
and $c_{2k}$ be the color of $X_{2k-1}$.
Then,
there exist three indices $i_1 < i_2 < i_3$ in $\{0, 1, \dots , 2k\}$
such that $c_{i_1} = c_{i_2} = c_{i_3}$.
If $0 \neq i_1$ and $i_3 \neq 2k$,
then it follows from the condition (4) that
$Y_{i_1}^1, Y_{i_2}^j$ with $1 \leq j \leq r$ and $Y_{i_3}^1$
form a monochromatic induced copy of $\lozenge_r$
with color $c_{i_1}$.
In the case when $i_1 = 0$ and/or $i_3 = 2k$,
using $X_{0}$ and/or $X_{2k-1}$
instead of $Y_{i_1}^1$ and/or $Y_{i_3}^1$,
we similarly obtain a monochromatic induced copy of $\lozenge_r$
with color $c_{i_1}$.
This completes the proof of Theorem \ref{BR22-B2}.
\end{proof}

\section*{Acknowledgment}

The second author visited the University of Hamburg in 2023 and would like to express his gratitude to Professor Mathias Schacht for the discussions he provided. The authors also thank Gang Yang for his helpful discussions related to this paper.




\end{document}